      \newcommand\cyr{%
      \renewcommand\rmdefault{wncyr}%
      \renewcommand\sfdefault{wncyss}%
      \renewcommand\encodingdefault{OT2}%
      \normalfont
      \selectfont}
      \DeclareTextFontCommand{\textcyr}{\cyr}
      \theoremstyle{plain}
      \newtheorem{theorem}{Theorem}[section]
      \newtheorem{lemma}[theorem]{Lemma}
      \newtheorem{proposition}[theorem]{Proposition}
      \newtheorem{corollary}[theorem]{Corollary}
      \theoremstyle{definition}
      \newtheorem{definition}[theorem]{Definition}
      \theoremstyle{remark}
      \newtheorem{remark}[theorem]{Remark}
      \newcommand{\R}{{\mathbb R}}
      \newcommand{\C}{{\mathbb C}}
      \newcommand{\pv}{\mathop{\rm p.\,v. }\limits}
      \newfont{\cmbsy}{cmbsy10}
      \newfont{\cmmib}{cmmib10}
      \newcommand{\Orden}{\mathop{\hbox{\cmbsy O}}}
      \DeclareMathOperator{\li}{li}
      \DeclareMathOperator{\ali}{ali}
      \def\@setcopyright{}
      \def\serieslogo@{}
\begin{document}

%


   
   \author{Juan Arias de Reyna$\,\,$}

   \address{Facultad de Matem\'aticas, 
   Universidad de Sevilla, \newline
   Apdo.~1160, 41080-Sevilla, Spain}
   \email{arias@us.es}


   \author{Jérémy Toulisse}
   \address{51 Route de Laborie,
   Foulayronnes 47510 (Lot-et-Garonne)
   France }
   \email{toulisse.jeremy@voila.fr}



   \title
   {The $n$-th prime asymptotically}


   \begin{abstract}
A new derivation of the classic asymptotic expansion of the 
$n$-th prime is presented. A fast algorithm for the computation of its 
terms is also given, which will be an improvement of that by Salvy (1994). 

Realistic bounds for the error with $\li^{-1}(n)$, after having retained  
the first $m$ terms, for $1\le m\le 11$, are given. 
Finally, assuming the Riemann Hypothesis, we give estimations 
of the best possible $r_3$ such that, for $n\ge r_3$, we have $p_n> s_3(n)$
where $s_3(n)$ is the sum of the first four terms of the asymptotic expansion.    
   \end{abstract}






   \maketitle

\section{Introduction.}
   
   \subsection{Historical note.}
   Chebyshev failed to fully prove the Prime Number Theorem (PNT), but he 
   obtained some notable approximations. For example, he proved 
   that for every natural number $n$: if the limit
   \begin{displaymath}
   \lim_{x\to\infty}\frac{\log^nx}{x}\bigl(\pi(x)-\li(x)\bigr)
   \end{displaymath}
   exists, then this limit must be equal to $0$. 
   
   The question was decided by  de la Vallée Poussin (1899) when he 
   gave his bound on the 
   error in the PNT: 
   The above limits exist and equal $0$.
   
   In 1894, Pervushin,
   a priest in Perm,  published several 
   formulae obtained empirically about prime numbers%
   \footnote{Ivan Mikheevich Pervushin (1827-1900)
   {\cyr  (Ivan Mikheevich Pervushin)}. No small achievement if 
   we note that he had only a table of primes up to $3\,000\,000$.}. 
   One of these formulae
   gives the following approximation to the $n$-th prime
   \begin{displaymath}
   \frac{p_n}{n}=\log n+\log\log n-1+\frac{5}{12\log n}+\frac{1}{24\log^2n}.
   \end{displaymath}
   Cesàro then published a note \cite[1894]{C} where he asserts that the 
   true formula is
   \begin{multline*}
   \frac{p_n}{n}=\log n+\log\log n-1+\frac{\log\log n-2}{\log n}-\\
   -\frac{(\log\log n)^2-6\log\log n+11}{2\log^2n}+o(\log^{-2}n).
   \end{multline*}
   Despite no mention by
   Cesàro in \cite{C}, the editors of his collected
   works added a note to \cite{C} pointing out that certain formulae quoted
   by Cesàro, since they followed from the results of Chebyshev, were only 
   established under the assumption of the existence 
   of the implied limits.  It therefore remains unsurprising that 
   Hilbert, in the Jahrbuch\footnote{\emph{Jahrbuch über die Fortschritte 
   der Mathematik (1868--1942)}, a forerunner for the 
   \emph{Zentralblatt für Mathematik}, at present digitalized  at 
   \url{http://www.emis.de/MATH/JFM/JFM.html}.} stated that Césaro did not prove his
   formula. 
   
   Landau \cite[1907]{L} several years later was better informed: 
   a formula, like that  
   of Cesàro, would imply the PNT, which had yet to be proved
   at Cesàro's time. However, using the results of Chebyshev, Cesàro may claim
   that if there is some formula for $p_n$ correct to the order 
   $n(\log n)^{-2}$, then it must coincide with his formula.
   
   Cipolla \cite[1902]{Ci} obtained an infinite asymptotic 
   expansion for $p_n$ and 
   gave a recursive formula to compute its terms.  He published after the 
   results of de la Vallée Poussin  but it seems that he was unaware of   
   these results, so that
   gave his proof under the same hypotheses as Cesáro.  So uninformed was he that 
   he attempted to prove some false formulae of Pervushin already \emph{corrected} by 
   Torelli \cite{T}
   \begin{displaymath}
   p_{n+1}-p_n=\log n+\log\log n+\frac{\log\log n-1}{\log n}+\Orden\Bigl(\frac
   {\log\log n}{\log n}\Bigr)^2
   \end{displaymath}
   with an \emph{impeccable proof} that if such a formula exists, then it must 
   be this formula.  (Such a formula would refute the twin prime conjecture,
   and today the above formula is known to be false.)
   
   In  his \emph{Handbuch}  Landau \cite[\S\ 57]{L2}  obtained by
   means of  the procedure of 
   Cesàro, some approximative formulae for $p_n$, and explained that the 
   method could give further terms. He also mentioned some recursive formulae
   without giving any clue for their derivation.
   
   We may say that Pervushin was the first to deal with  a formula for 
   $p_n$, albeit that he gave only the first few terms. 
   Cesàro then proved that in the
   case  such a formula exists, it must be one from which he would be able to derive
   several terms. Cipolla  found a method to write all the terms
   of the expansion if there is one.  Landau saw that the results of 
   de la Vallée Poussin imply that the expansion certainly exists.
         

   The algorithm given by Cipolla is not very convenient for the computation of
   the terms of the expansion. He iteratively computes the derivative
   of some polynomials appearing in the expansion but    computes 
   the constant terms as  determinants of increasing order. 
   Robin \cite[1988]{Ro2} considers the problem of computing these and other 
   similar expansions, leaving the problem of computing the 
   constant terms of the polynomials as an open problem.  Later 
   Salvy \cite[1994]{Sa} gives a satisfactory algorithm. This algorithm
   needs $\Orden(n^{7/2}\sqrt{\log n})$ coefficient operations to compute
   all the polynomials up to the $n$-th polynomial. 

   The asymptotic expansion of $p_n$ also plays a role in the study of $g(n)$,
   which is the maximum order of any element in the symmetric group $S_n$. 
   In fact, $\log g(n)$ has the same asymptotic expansion as $\sqrt{\li^{-1}(x)}$
   \cite{MNR}.

   There are many results giving true bounds on $p_n$, for example we  mention
   $p_n\ge n\log n$  \cite[1939]{R}, and $p_n\ge n(\log n+\log\log n-1)$ 
   \cite[1999]{D} both for $n\ge2$ (with partial results given in \cite{RS}, 
   \cite{Ro}, \cite{MR}, \cite{D}).
   In \cite{D2} it is also proved that
   \begin{displaymath}
   p_n
   \le n\Bigl(\log n+\log\log n-1+\frac{\log\log n-2}{\log n}\Bigr),\qquad n\ge 
   688\,383.
   \end{displaymath}

   \medskip
   
   \subsection{Organization of the paper.}
   In this paper we present a new derivation of the asymptotic
   expansion for $p_n$ and obtain explicit bounds for the error. 
   
   First, it must 
   be said that the asymptotic expansion has, in a certain sense, nothing
   to do with prime numbers: it is an asymptotic expansion of 
   $\ali(x):=\li^{-1}x$ which is the
   inverse of the usual logarithmic integral function.
   
   In Section \ref{S2} a proof of the existence of the expansion is given, 
   following  the path 
   of Cesàro, since it cannot be  found elsewhere, although
   it is frequently  claimed it can be done.   This Section is not needed
   in the rest of the paper.
   
   In Section \ref{S3},  a new formal derivation of the expansion is given.
   We obtain a new algorithm to compute the polynomials (Theorem \ref{T:algorithm}).  
   This is  simpler
   than that given by Salvy \cite{Sa}. Our algorithm  allows 
   all the polynomials 
   up to the $n$-th one to be computed in $\Orden(n^2)$ coefficient 
   operations (Theorem 
   \ref{T:complexity}). 
   It must be said that these polynomials have $\Orden(n^2)$ coefficients.

   In Section \ref{Sbounds}, independently of Section \ref{S2}, we prove that 
   the  formal expansion given in Section \ref{S3} is in fact
   the asymptotic expansion of $\ali(x)$ and gives  realistic bounds on
   the error (Theorem \ref{TMain} and \ref{concrete}).  
   
   In Section \ref{S4}, the results are applied to $p_n$ the $n$-th prime.
   Using the results of de la Vallée Poussin it can be shown that the asymptotic
   expansion of $\ali(n)$ is also an asymptotic expansion for $p_n$. 
   
   By assuming the Riemann Hypothesis, we found (Theorem \ref{Tdistance}) that 
   \begin{displaymath}
   |p_n-\ali(n)|\le\frac{1}{\pi}\sqrt{n}\,(\log  n)^{\frac52},\qquad n\ge11.
   \end{displaymath}
   This bound of $p_n$ is better than all the bounds cited above.
   
   We end the paper by motivating why  the above bounds have not
   been extended to further terms of the asymptotic expansion (Theorem \ref{T:r3}).
   \bigskip
   
   Notations:  With a certain hesitation we have introduced the notation
   $\ali(x)$ to denote the inverse function of $\li(x)$. 
   
   In Section \ref{Sbounds}, where  explicit bounds are sought, it has been 
   useful to denote by $\theta$ a real or complex number of absolute value
   $|\theta|\le1$, which will not always be the same, and depends on all parameters or 
   variables in the corresponding equation.
   
   \bigskip

   \textsc{Acknowledgement: } The authors would like to thank Jan van de Lune 
   ( Hallum, The Netherlands )
   for his linguistic assistance in
   preparing the paper, and his interest in our results.
   
\section{The inverse function of the logarithmic integral.}
   
   Usually $\li(x)$ is defined for real $x$ as the principal value 
   of the integral 
   \begin{displaymath}
   \li(x)=\pv\int_0^x\frac{dt}{\log t}.
   \end{displaymath}
   It may be extended  to an analytic function over the region $\Omega=
   \C\smallsetminus(-\infty,1]$, which is
   the complex plane with a cut along the real 
   axis $x\le 1$. The main branch of the logarithm is defined in $\Omega$ and does
   not vanish there. Therefore,  $\li(z)$ may be defined  in $\Omega$ by 
   \begin{equation}
   \li(z)=\li(2)+\int_2^z\frac{dt}{\log t},\qquad z\in\Omega
   \end{equation}
   where we integrate, for example, along the segment from $2$ to $z$. 
   
   For real $x>1$, the function $\li(x)$ is increasing and maps the 
   interval $(1,+\infty)$ onto $(-\infty,+\infty)$, so that we may define the inverse
   function $\ali\colon\R\to(1,+\infty)$ by
   \begin{equation}
   \li(\ali(x))=x.
   \end{equation}
   The function $\li(x)$ is analytic on $\Omega$, so that $\ali(x)$ is real 
   analytic. It is clear that we have the following rules of differentiation 
   \begin{equation}
   \frac{d}{dx}\li(x)=\frac{1}{\log x},\qquad \frac{d}{dx}\ali(x)=\log\ali(x).
   \end{equation}
   
   It is well known that the function $\li(x)$ has an asymptotic  expansion:
   \begin{theorem}
   For each integer $N\ge0$ 
   \begin{equation}\label{liexp}
   \li(x)=\frac{x}{\log x}\Bigl(1+\sum_{k=1}^N \frac{k!}{\log^k x}+
   \Orden\Bigl(\frac{1}{\log^{N+1}x}\Bigr)\Bigr),\qquad (x\to+\infty).
   \end{equation}
   \end{theorem}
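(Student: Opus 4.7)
The plan is to derive the expansion by repeated integration by parts on the defining integral, then control the tail integral by a dyadic split.

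First I would start from the representation $\li(x)=\li(2)+\int_2^x\frac{dt}{\log t}$ given in the paper. Integration by parts with $u=(\log t)^{-k}$ and $dv=dt$ yields the recursion
\begin{equation*}
\int_2^x\frac{dt}{\log^k t}=\frac{x}{\log^k x}-\frac{2}{\log^k 2}+k\int_2^x\frac{dt}{\log^{k+1}t}.
\end{equation*}
Iterating this identity $N+1$ times, starting from $k=1$, produces
\begin{equation*}
\int_2^x\frac{dt}{\log t}=\sum_{k=0}^{N}\frac{k!\,x}{\log^{k+1}x}+(N+1)!\int_2^x\frac{dt}{\log^{N+2}t}+C_N,
\end{equation*}
where $C_N$ is the accumulated constant coming from the lower endpoint $t=2$; in particular $C_N$ depends on $N$ but not on $x$.

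The next step is to bound the remaining integral $R_N(x):=\int_2^x(\log t)^{-(N+2)}\,dt$. I would split it at $\sqrt{x}$: on $[2,\sqrt{x}]$ one has the trivial bound $R_N\le \sqrt{x}/(\log 2)^{N+2}$, which is $o(x/\log^{N+2}x)$; on $[\sqrt{x},x]$, since $\log t\ge \tfrac12\log x$, the integrand is at most $2^{N+2}/\log^{N+2}x$, giving a contribution at most $2^{N+2}x/\log^{N+2}x$. Therefore
\begin{equation*}
(N+1)!\int_2^x\frac{dt}{\log^{N+2}t}=\Orden\!\Bigl(\frac{x}{\log^{N+2}x}\Bigr).
\end{equation*}
Since the constants $\li(2)+C_N$ are $\Orden(1)=\Orden(x/\log^{N+2}x)$, putting everything together yields
\begin{equation*}
\li(x)=\frac{x}{\log x}+\sum_{k=1}^{N}\frac{k!\,x}{\log^{k+1}x}+\Orden\!\Bigl(\frac{x}{\log^{N+2}x}\Bigr),
\end{equation*}
which is exactly \eqref{liexp} after factoring $x/\log x$.

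The argument has no serious obstacle; the only point that requires a little care is the choice of the splitting point for the tail integral, since one cannot simply bound $\log t$ from below on all of $[2,x]$. The dyadic split at $\sqrt{x}$ (any power $x^{\alpha}$ with $0<\alpha<1$ would do) handles this cleanly, and the constants that accumulate from successive lower-limit evaluations are absorbed into the error term of order $x/\log^{N+2}x$, which dominates any bounded quantity.
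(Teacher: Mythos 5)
Your proof is correct and follows exactly the route the paper indicates: the paper does not write out a proof but simply states that the result ``may be proved by repeated integration by parts'' (citing Montgomery--Vaughan), which is precisely your argument, with the standard split of the tail integral at $\sqrt{x}$ supplying the missing details.
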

   
   This may be proved  by repeated integration by parts
   (see \cite[p.~190--192]{MV}).    
   
   \section{Asymptotic expansion of $\ali(x)$.}\label{S2}
   
   In this section, we prove  the following
   \begin{theorem}\label{AE}
   For each integer $N\ge0$ 
   \begin{equation}\label{H}
   \frac{\ali(e^x)}{x e^x}=1+\sum_{n=1}^N\frac{P_{n-1}(\log x)}{x^n}+
   \Orden\Bigl(\frac{\log^{N+1}x}{x^{N+1}}\Bigr), \qquad (x\to+\infty)
   \end{equation}
   where the $P_{n-1}(z)$ are polynomials of degree $\le n$.
   \end{theorem}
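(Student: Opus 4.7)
My approach would be to invert the asymptotic expansion (\ref{liexp}) of $\li$ by a bootstrap argument. Set $y:=\ali(e^x)$, so that $\li(y)=e^x$, and put $u:=\log y$. Since $\ali(e^x)\to+\infty$ as $x\to+\infty$, also $u\to+\infty$, so applying (\ref{liexp}) with $N$ terms and taking logarithms yields an implicit equation of the form
\begin{displaymath}
u = x + \log u + \Phi_N(1/u) + \Orden(u^{-N-1}),
\end{displaymath}
where $\Phi_N$ is a polynomial with $\Phi_N(0)=0$, obtained by expanding $-\log\bigl(1+\sum_{k=1}^{N}k!/u^{k}+\cdots\bigr)$ as a power series in $1/u$.

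I would then bootstrap this relation. The crude bound $u\sim x$ gives $u=x+\log x+\Orden(\log x/x)$; substituting back and iterating, I would prove by induction on $N$ that
\begin{displaymath}
u = x + \log x + \sum_{n=1}^{N}\frac{Q_{n-1}(\log x)}{x^n} + \Orden\Bigl(\frac{\log^{N+1}x}{x^{N+1}}\Bigr),
\end{displaymath}
where each $Q_{n-1}(z)$ is a polynomial of degree $\le n$ with rational coefficients. The extra degree in $\log x$ at each step comes from expanding $\log u=\log x+\log\bigl(1+(\log x+\cdots)/x\bigr)$ as a power series.

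To finish, I would return to $y$ by rewriting (\ref{liexp}) as
\begin{displaymath}
y=e^x u\Bigl(1+\sum_{k=1}^{N}\frac{k!}{u^{k}}+\Orden(u^{-N-1})\Bigr)^{-1},
\end{displaymath}
substituting the bootstrapped expansion of $u$, expanding the resulting product as a series in $1/x$ with coefficients polynomial in $\log x$, and dividing by $xe^x$ to obtain (\ref{H}).

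The main obstacle, in my view, is the combinatorial bookkeeping required to establish the sharp degree bound $\deg P_{n-1}\le n$: throughout the substitutions and series manipulations one must verify that no term of degree greater than $n$ in $\log x$ ever contributes at order $x^{-n}$. A secondary but purely technical point is tracking the error terms to confirm that they remain $\Orden(\log^{N+1}x/x^{N+1})$ at every step of the induction rather than getting degraded to something larger.
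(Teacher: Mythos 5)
Your plan is correct and is essentially the route the paper takes: the paper also proves \eqref{H} by iteratively refining the inverted expansion of $\li$, with the key intermediate object being the expansion $\log\ali(e^x)=x+\log x+\sum_n Q_n(\log x)x^{-n}+\cdots$ (your bootstrapped $u$), and with the crucial gain of one order per iteration coming from the logarithm, exactly as you describe. The two bookkeeping issues you flag --- the degree bound $\deg P_{n-1}\le n$ and the preservation of the $\Orden(\log^{N+1}x/x^{N+1})$ error under substitution --- are precisely what the paper isolates into its substitution lemma (Lemma \ref{Lsubs}), which it then applies repeatedly inside an induction on $N$, the base case $u\sim x$ being supplied by Theorem \ref{first}.
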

   
   In the case of $N=0$ the sum must be understood as equal to $0$. 
   
   The theorem says that,
   for each $N$,  there exists an $x_N>1$ and a constant $C_N$ such that 
   \begin{displaymath}
   \Bigl|\frac{\ali(e^x)}{x e^x}-1-\sum_{n=1}^N\frac{P_{n-1}(\log x)}{x^n}
   \Bigr|\le C_N \frac{\log^{N+1}x}{x^{N+1}},\qquad (x>x_N).
   \end{displaymath}
   
   In the course of the proof we will make repeated use of the following
   \begin{lemma}\label{Lsubs}
   Let $f(x)$ be a function defined on a neighbourhood of $x=0$ such that
   \begin{equation}\label{L1}
   f(x)=a_1x+\cdots +a_N x^N+\Orden(x^{N+1}),\qquad (x\to0)
   \end{equation}
   where the $a_k$ are given constants. Assume that $g(x)$ satisfies
   \begin{equation}\label{L2}
   g(x)=\sum_{n=1}^N \frac{p_n(\log x)}{x^n}+
   \Orden\Bigl(\frac{\log^{N+1}x}{x^{N+1}}\Bigr),\qquad (x\to+\infty)
   \end{equation}
   where the $p_n(z)$ are polynomials of degree $\le n$.
   Then there exist polynomials $q_k(z)$ of degree $\le k$  such that
   \begin{equation}\label{L5}
   f(g(x))=\sum_{k=1}^N \frac{q_k(\log x)}{x^k}+
   \Orden\Bigl(\frac{\log^{N+1}x}{x^{N+1}}\Bigr),\qquad (x\to+\infty).
   \end{equation}
   \end{lemma}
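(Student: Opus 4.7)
The plan is to substitute the asymptotic expansion \eqref{L2} of $g$ at $+\infty$ into the Taylor-type expansion \eqref{L1} of $f$ at $0$, and then collect terms by powers of $1/x$. First, since each $p_n(\log x)/x^n\to 0$ as $x\to+\infty$, equation \eqref{L2} gives $g(x)\to 0$, so that for $x$ large enough $g(x)$ lies in the neighbourhood of $0$ on which \eqref{L1} is valid. Hence, for such $x$,
\[
f(g(x))=\sum_{j=1}^{N}a_j\,g(x)^j+R(x),\qquad R(x)=\Orden\bigl(g(x)^{N+1}\bigr).
\]
Since the leading term of $g(x)$ is $p_1(\log x)/x=\Orden(\log x/x)$, I get $|g(x)|^{N+1}=\Orden\bigl((\log x)^{N+1}/x^{N+1}\bigr)$, which is already absorbed in the remainder of \eqref{L5}.

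Next, I would expand each power $g(x)^j$ with $1\le j\le N$ by the multinomial theorem, writing $g(x)=\sum_{n=1}^{N}p_n(\log x)/x^n+\varepsilon(x)$ where $\varepsilon(x)=\Orden((\log x)^{N+1}/x^{N+1})$. A ``pure'' contribution from $g(x)^j$ is a product $p_{n_1}(\log x)\cdots p_{n_j}(\log x)/x^{n_1+\cdots+n_j}$ with each $n_i\ge 1$; since $\deg p_{n_i}\le n_i$, the numerator is a polynomial in $\log x$ of degree $\le n_1+\cdots+n_j$. Grouping such contributions by the total exponent $n=n_1+\cdots+n_j$ yields a polynomial $P_{j,n}(\log x)$ of degree $\le n$, divided by $x^n$, which is exactly the required shape.

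The final step is bookkeeping. For $n\le N$, the pure contributions, summed over $j$ with weights $a_j$, define the desired polynomials $q_k(\log x)$ of degree $\le k$. For $n>N$, the bound $|P_{j,n}(\log x)/x^n|\le C(\log x)^n/x^n=C\bigl((\log x)/x\bigr)^{n-N-1}\cdot(\log x)^{N+1}/x^{N+1}$ together with the fact that $(\log x)/x$ is bounded for large $x$ gives $\Orden((\log x)^{N+1}/x^{N+1})$. The mixed terms of $g(x)^j$ containing at least one factor of $\varepsilon(x)$ are likewise absorbed: each such term has one factor of size $\Orden((\log x)^{N+1}/x^{N+1})$ and the remaining factors of size $\Orden(\log x/x)$, whose product stays bounded. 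The only real obstacle I anticipate is verifying uniformly that all these leftover terms fit into the stated error; but this reduces to the elementary observation that $(\log x/x)^k$ is bounded for $k\ge 0$ and $x$ large, and to the fact that the number of multinomial terms depends only on $N$ and $j$.
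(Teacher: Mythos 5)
Your proposal is correct and follows essentially the same route as the paper's proof: observe that $g(x)=\Orden(\log x/x)\to 0$ so the expansion \eqref{L1} may be applied, expand the powers $g(x)^j$, and collect terms by powers of $x^{-1}$ while absorbing everything of order $\ge N+1$ into the error. You merely make explicit the multinomial bookkeeping and degree count that the paper compresses into ``it is easy to obtain.''
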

   
   \begin{proof}
   It is clear that, for each $1\le n\le N$ we have $p_n(\log x)x^{-n}
   =\Orden((\log x/x)^n)$. Therefore, $g(x)=\Orden(\log x/x)$ (this is 
   true even when $N=0$ and there is no $p_n$). 
   It follows that $\lim_{x\to+\infty}g(x)=0$ and by substitution
   in \eqref{L1},  we obtain
   \begin{equation}\label{L3}
   f(g(x))=\sum_{n=1}^N a_n g(x)^n +\Orden\Bigl(\frac{\log ^{N+1}x}{x^{N+1}}
   \Bigr).
   \end{equation}
   By expanding the powers $g(x)^n$ by \eqref{L2} it is easy to  
   obtain an expression of the form   
   \begin{equation}
   g(x)^n=\sum_{k=1}^N \frac{p_{n,k}(\log x)}{x^k}+
   \Orden\Bigl(\frac{\log^{N+1}x}{x^{N+1}}\Bigr),\qquad (x\to+\infty)
   \end{equation}  
   where each $p_{n,k}(z)$ is a polynomial of degree $\le k$. 
   By substituting these values in equation \eqref{L3} and collecting terms with the
   same power of $x$,  \eqref{L5} is obtained.
   \end{proof}
   
   We will prove Theorem \ref{AE} by induction. The following theorem yields
   the first step of this induction.
   
   \begin{theorem}\label{first}
   \begin{equation}\label{eq_first}
   \frac{\ali(x)}{x\log x}=1+\Orden\Bigl(\frac{\log\log x}{\log x}\Bigr),
   \qquad(x\to+\infty).
   \end{equation}
   \end{theorem}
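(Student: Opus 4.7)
The plan is to invert the leading-order asymptotic of $\li$. Set $y=\ali(x)$, so that by definition $\li(y)=x$. Note that $y\to+\infty$ as $x\to+\infty$, because $\ali$ maps $\R$ onto $(1,+\infty)$ and is increasing. Apply the asymptotic expansion \eqref{liexp} with $N=1$ (or simply one integration by parts) at the point $y$:
\begin{equation*}
x=\li(y)=\frac{y}{\log y}\Bigl(1+\Orden\Bigl(\frac{1}{\log y}\Bigr)\Bigr),\qquad (y\to+\infty).
\end{equation*}

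The first step is a crude control on $\log y$ in terms of $\log x$. Taking logarithms of the previous display gives
\begin{equation*}
\log x=\log y-\log\log y+\Orden\Bigl(\frac{1}{\log y}\Bigr),
\end{equation*}
and dividing by $\log y$ yields $\log x/\log y=1-\log\log y/\log y+\orden(1)$, so $\log y\sim\log x$; in particular $\log\log y=\log\log x+\orden(1)$, and $1/\log y=\Orden(1/\log x)$.

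The second step is to refine this to an additive estimate. Feeding $\log y\sim\log x$ back into $\log y=\log x+\log\log y+\Orden(1/\log y)$ produces
\begin{equation*}
\log y=\log x+\Orden(\log\log x).
\end{equation*}
Substituting this into $y=x\log y\bigl(1+\Orden(1/\log y)\bigr)$ gives
\begin{equation*}
y=x\bigl(\log x+\Orden(\log\log x)\bigr)\Bigl(1+\Orden\Bigl(\frac{1}{\log x}\Bigr)\Bigr)=x\log x\Bigl(1+\Orden\Bigl(\frac{\log\log x}{\log x}\Bigr)\Bigr),
\end{equation*}
which is exactly \eqref{eq_first} after dividing by $x\log x$.

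There is no serious obstacle; the proof is a standard two-step bootstrap of asymptotic inversion. The only point to handle with care is that one cannot conclude $\log y=\log x+\Orden(\log\log x)$ in one shot from the relation $\log x=\log y-\log\log y+\Orden(1/\log y)$: one must first derive the multiplicative estimate $\log y\sim\log x$ (so that $\log\log y=\log\log x+\orden(1)$) before the additive estimate becomes legitimate. Once that ordering is respected, the rest is routine arithmetic.
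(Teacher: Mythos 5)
Your proof is correct and follows essentially the same route as the paper: substitute $y=\ali(x)$ into the leading asymptotic of $\li$, take logarithms to first establish $\log\ali(x)\sim\log x$ (and hence $\log\log\ali(x)\sim\log\log x$), and then bootstrap this back into the original relation to get the stated error term. The paper phrases the second step multiplicatively (multiplying the two ratios $x\log\ali(x)/\ali(x)$ and $\log x/\log\ali(x)$) where you phrase it additively, but this is only a cosmetic difference.
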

   
   \begin{proof}
   From \eqref{liexp} with $N=0$ we have   
   $\frac{\li(y)\log y}{y}=1+\Orden(\log^{-1} y)$ for $y\to\infty$.
   Since 
   $\lim_{x\to+\infty}\ali(x)=+\infty$ we may substitute $y=\ali(x)$ 
   and obtain
   \begin{equation}\label{E16}
   \frac{x\log\ali(x)}{\ali(x)}=1+\Orden\Bigl(\frac{1}{\log\ali(x)}\Bigr).
   \end{equation}
   By taking logarithms
   \begin{displaymath}
   \log x-\log\ali(x)+\log\log\ali(x)=\Orden\Bigl(\frac{1}{\log\ali(x)}\Bigr)
   \end{displaymath}
   we obtain 
   \begin{equation}\label{E12}
   \frac{\log x}{\log\ali(x)}=1+\Orden\Bigl(\frac{\log\log\ali(x)}{\log\ali(x)}
   \Bigr)
   \end{equation}
   and it follows that 
   \begin{equation}\label{E13}
   \lim_{x\to+\infty}\frac{\log x}{\log\ali(x)}=1.
   \end{equation}
   By taking log in \eqref{E12}
   \begin{displaymath}
   \log\log x-\log\log\ali(x)=\Orden\Bigl(\frac{\log\log\ali(x)}{\log\ali(x)}
   \Bigr)
   \end{displaymath}
   we obtain
   \begin{displaymath}
   \frac{\log\log x}{\log\log\ali(x)}=1+
   \Orden\Bigl(\frac{1}{\log\ali(x)}\Bigr)
   \end{displaymath}
   so that  
   \begin{equation}\label{E14}
   \lim_{x\to+\infty}\frac{\log\log x}{\log\log\ali(x)}=1.
   \end{equation}
   In view of \eqref{E13} and \eqref{E14}, we may write \eqref{E16} and \eqref{E12}
   in the form
   \begin{equation}
   \frac{x\log\ali(x)}{\ali(x)}=1+\Orden\Bigl(\frac{1}{\log x}\Bigr),
   \qquad
   \frac{\log x}{\log\ali(x)}=1+\Orden\Bigl(\frac{\log\log x}{\log x}
   \Bigr)
   \end{equation}
   and by multiplying these two, we obtain 
   \begin{displaymath}
   \frac{x\log x}{\ali(x)}=1+\Orden\Bigl(\frac{\log\log x}{\log x}\Bigr)
   \end{displaymath}
   from which \eqref{eq_first}  can  easily be deduced. 
   \end{proof}
   
   \begin{proof}[Proof of Theorem \ref{AE}]
   We proceed by induction. For $N=0$, our theorem is simply 
   Theorem \ref{first} with  $e^x$ instead of $x$. 
   
   Hence we assume \eqref{H} and try to prove the case $N+1$. 
   
   Our objective will be obtained by starting from the expansion of 
   $\li(y)$. By \eqref{liexp}
   \begin{displaymath}
   \li(y)=\frac{y}{\log y}\Bigl(1+\sum_{k=1}^{N+1} \frac{k!}{\log^k y}+
   \Orden\Bigl(\frac{1}{\log^{N+2}y}\Bigr)\Bigr).
   \end{displaymath}
   By substituting  $y=\ali(e^x)$ and applying \eqref{E13} we obtain
   \begin{equation}\label{E17}
   \frac{e^x\log\ali(e^x)}{\ali(e^x)}=1+\sum_{k=1}^{N+1} \frac{k!}{(\log \ali(e^x))^k}+
   \Orden\Bigl(\frac{1}{x^{N+2}}\Bigr).
   \end{equation}
   \medskip
   
   From our induction hypothesis, the expansion of
   $\log\ali(x)$ and $(\log\ali(x))^{-k}$ is now sought.

   By taking the log of \eqref{H} we obtain
   \begin{displaymath}
   \log\ali(e^x)=x+\log x+\log\Bigl\{1+\sum_{n=1}^N\frac{P_{n-1}(\log x)}{x^n}+
   \Orden\Bigl(\frac{\log^{N+1}x}{x^{N+1}}\Bigr)\Bigr\}.
   \end{displaymath}
   Lemma \ref{Lsubs} may be applied  with 
   \begin{displaymath}
   \log(1+X)=X-\frac{X^2}{2}+\frac{X^3}{3}-\cdots (-1)^{N+1}\frac{X^{N}}{N}+
   \Orden(X^{N+1})
   \end{displaymath}
   to obtain
   \begin{displaymath}
   \log\ali(e^x)=x+\log x+\sum_{n=1}^N\frac{Q_{n+1}(\log x)}{x^n}+
   \Orden\Bigl(\frac{\log^{N+1}x}{x^{N+1}}\Bigr).
   \end{displaymath}
   The reason why we have written 
   $Q_{n+1}$ instead 
   of $Q_n(x)$ is revealed below.
   The above may be written as 
   \begin{displaymath}
   \log\ali(e^x)=x\Bigl\{1+\frac{\log x}{x}+\sum_{n=1}^N
   \frac{Q_{n+1}(\log x)}{x^{n+1}}+
   \Orden\Bigl(\frac{\log^{N+1}x}{x^{N+2}}\Bigr)\Bigr\}
   \end{displaymath}
   or
   \begin{equation}\label{logaliexp}
   \log\ali(e^x)=x\Bigl\{1+
   \sum_{n=1}^{N+1}\frac{Q_{n}(\log x)}{x^{n}}+
   \Orden\Bigl(\frac{\log^{N+2}x}{x^{N+2}}\Bigr)\Bigr\}
   \end{equation}
   where the $Q_n(z)$ are polynomials of degree $\le n$. 
   Observe that knowing the expansion of $\ali(e^x)$ up to
   $(\log x/x)^{N+1}$ has enabled us to obtain 
   $\log\ali(e^x)$ up to $(\log x/x)^{N+2}$;
   this will be of great 
   importance in what follows. 
   
   From \eqref{logaliexp}, for all natural numbers $n$,
   \begin{displaymath}
   \frac{1}{(\log\ali(e^x))^n}=\frac{1}{x^n}\Bigl\{1+
   \sum_{k=1}^{N+1}\frac{Q_{k}(\log x)}{x^{k}}+
   \Orden\Bigl(\frac{\log^{N+2}x}{x^{N+2}}\Bigr)\Bigr\}^{-n}.
   \end{displaymath}
   By applying Lemma \ref{Lsubs} with 
   \begin{displaymath}
   (1+x)^{-n}-1=\sum_{r=1}^{N+1}\binom{-n}{r}x^r+\Orden(x^{-N-2})
   \end{displaymath}
   we obtain
   \begin{equation}\label{E19}
   \frac{1}{\{\log\ali(e^x)\}^n}=\frac{1}{x^n}
   \Bigl(1+\sum_{k=1}^{N+1}\frac{V_{n,k}(\log x)}{x^k}+
   \Orden\Bigl(\frac{\log^{N+2}x}{x^{N+2}}\Bigr)\Bigr)
   \end{equation}
   where the $V_{n,k}(z)$ are polynomials of degree $\le k$.

   By substituting these values of  $\{\log\ali(e^x)\}^{-n}$ in \eqref{E17},
   we obtain 
   \begin{displaymath}
   \frac{e^x\log\ali(e^x)}{\ali(e^x)}=1
   +\sum_{k=1}^{N+1}\frac{U_k(\log x)}{x^k}+
   \Orden\Bigl(\frac{\log^{N+2}x}{x^{N+2}}\Bigr).
   \end{displaymath}
   Hence again from \eqref{E19} with $n=1$
   \begin{multline*}
   \frac{e^x}{\ali(e^x)}=\frac{1}{x}\Bigl\{1+\sum_{k=1}^{N+1}
   \frac{V_{1,k}(\log x)}{x^k}+
   \Orden\Bigl(\frac{\log^{N+2}x}{x^{N+2}}\Bigr)\Bigr\}\times\\
   \times\Bigl\{1
   +\sum_{k=1}^{N+1}\frac{U_k(\log x)}{x^k}+
   \Orden\Bigl(\frac{\log^{N+2}x}{x^{N+2}}\Bigr)\Bigr\}
   \end{multline*}
   from which we derive that there exist polynomials $W_k(z)$ of degree
   $\le k$ such that 
   \begin{displaymath}
   \frac{xe^x}{\ali(e^x)}=1+\sum_{k=1}^{N+1}
   \frac{W_k(\log x)}{x^k}+
   \Orden\Bigl(\frac{\log^{N+2}x}{x^{N+2}}\Bigr).
   \end{displaymath}
   Another application of Lemma \ref{Lsubs}
   yields 
   \begin{displaymath}
   \frac{\ali(e^x)}{xe^x}=1+\sum_{k=1}^{N+1}
   \frac{T_k(\log x)}{x^k}+
   \Orden\Bigl(\frac{\log^{N+2}x}{x^{N+2}}\Bigr)
   \end{displaymath}
   with polynomials $T_k(z)$ of degree $\le k$. 
   Therefore, we have an asymptotic expansion of type \eqref{H} 
   with $N+1$ instead of $N$. 
   The usual argument of uniqueness of the asymptotic expansion applies here so  
   that $T_k(z)= P_k(z)$ for $1\le k\le N$. 
   \end{proof}
   
   \section{Formal Asymptotic expansion.}\label{S3}
   
   First we give some motivation. We have seen that the asymptotic
   expansion of $\ali(e^x)$ is  
   \begin{displaymath}
   \ali(e^x)=xe^xV(x,\log x),\quad\text{where}\quad
    V(x,y):=1+\sum_{n=1}^\infty \frac{P_{n-1}(y)}{x^{n}}
   \end{displaymath}
   and differentiation yields
   \begin{displaymath}
   e^x\log \ali(e^x)  = (e^x +xe^x)V+xe^x V_x+e^x V_y.
   \end{displaymath}
   Here $\log\ali(e^x)=\log\bigl(xe^xV(x,\log x)\bigr)=y+x+\log V$, so that
   \begin{displaymath}
   y+x+\log V=V+xV+xV_x+V_y
   \end{displaymath}
   which we  write as
   \begin{equation}\label{E}
   V=1+\frac{y}{x}-\frac{1}{x}V-V_x-\frac{1}{x}V_y+\frac{1}{x}\log V.
   \end{equation}
   This ends our motivation for considering this equation.
   
   Consider now the ring $A$ of the formal power series of the type
   \begin{displaymath}
   \sum_{n=0}^\infty\frac{q_n(y)}{x^n}
   \end{displaymath}
   where the $q_n(y)$ are polynomials with complex coefficients of  degree less than 
   or equal to $n$. In particular $q_0(y)$ is a constant.
   
   It is clear that $A$, with the obvious operations, is a ring. The elements with
   $q_0=0$ form a maximal ideal $I$. An element $1+u$ with $q_0=1$ is invertible,
   with inverse $1-u+u^2-\cdots$. It follows that if $a\not\in I$, then $a$ is also
   invertible. Hence $I$ is the unique maximal ideal and $A$ is a local ring.
   If $a\in A$ is a non-vanishing element, then there exists a least natural number $n$
   with $q_n(y)\ne0$. We define $\deg(a)=n$ in this case, with $\deg(0)=\infty$.
   
   As is usual in local rings, (see \cite{La}) we may define a 
   topology induced by the norm 
   $\Vert a\Vert = 2^{-\deg(a)}$, which,   with the associated metric, 
   induces a complete 
   metric space.  Indeed $A$ is isomorphic to $\C[[X,Y]]$, by means of 
   the application that
   sends $X\mapsto x^{-1}$, $Y\mapsto yx^{-1}$.
   
   Given $a\in A$ with $a=\sum_{n=0}^\infty\frac{q_n(y)}{x^n}$, we define 
   two derivates
   \begin{displaymath}
   a_x = -\sum_{n=1}^\infty \frac{nq_n(y)}{x^{n+1}}\quad\text{and}\quad
   a_y=\sum_{n=1}^\infty \frac{q'_n(y)}{x^n}.
   \end{displaymath}
   
   Finally the set $U\subset A$ of elements with $q_0=1$ form a multiplicative
   subgroup of $A^*$ (the group of invertible elements of $A$). 
   For $1+u\in U$,  we define
   \begin{displaymath}
   \log(1+u)=\sum_{k=1}^\infty (-1)^{k+1}\frac{u^k}{k}
   \end{displaymath}
   which is a series that  is easily shown to converge since $u^k\in I^k$. 
   
   We are now ready to  prove the following
   \begin{theorem}
   The equation \eqref{E} has one and only one solution in the 
   ring $A$.
   \end{theorem}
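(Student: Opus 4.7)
My plan is to recast the equation as a fixed-point problem for the map
\[
F(V) := 1 + \frac{y}{x} - \frac{1}{x}V - V_x - \frac{1}{x}V_y + \frac{1}{x}\log V
\]
and apply the Banach contraction mapping theorem on a suitable complete metric space. First I would observe that any solution $V \in A$ of \eqref{E} is automatically in $U$: the formula for $\log V$ only makes sense on $U$, and reading off the constant coefficient ($x^{0}y^{0}$) on both sides of \eqref{E} forces $q_0(y) = 1$, since each of the terms $y/x$, $V/x$, $V_x$, $V_y/x$ lies in $I$. Hence I may restrict to $F\colon U \to U$ (the image has constant term $1$ by inspection), and since $U = 1 + I$ is a translate of the closed maximal ideal $I$ in the complete metric ring $A$, $U$ is itself a non-empty complete metric space.

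Next I would show that $F$ is a strict contraction with ratio $\le 1/2$, i.e.\ $\deg(F(V) - F(W)) \ge \deg(V - W) + 1$ for all $V, W \in U$. The difference $F(V) - F(W)$ splits as a sum of four terms which I would bound separately. Multiplication by $1/x$ obviously raises degree by one; the formal derivate $a \mapsto a_x$ sends $q_n(y)/x^n$ to $-n q_n(y)/x^{n+1}$ and hence also raises degree by one; and $a \mapsto a_y$ never lowers degree (differentiating in $y$ leaves the powers of $x$ untouched), so $a \mapsto a_y/x$ raises degree by one as well. The delicate step is the logarithmic term, and this is where I expect the main obstacle. Writing $V = 1+u$, $W = 1+v$ with $u, v \in I$, I would expand
\[
\log V - \log W = \sum_{k=1}^\infty \frac{(-1)^{k+1}}{k}(u^k - v^k) = (u - v)\sum_{k=1}^\infty \frac{(-1)^{k+1}}{k}\sum_{j=0}^{k-1} u^{k-1-j} v^{j},
\]
where the inner sum converges in $A$ because $u^{k-1-j}v^{j} \in I^{k-1}$. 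Hence $\log V - \log W = (V - W)\cdot h$ for some $h \in A$, giving $\deg(\log V - \log W) \ge \deg(V - W)$, and the factor $1/x$ in front supplies the required extra $+1$.

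With these four degree bounds in hand, $F$ is a strict contraction of ratio $1/2$ on the non-empty complete metric space $U$, so Banach's fixed-point theorem produces a unique $V \in U$ with $F(V) = V$, which is exactly equation \eqref{E}. Uniqueness in all of $A$ then follows from the first paragraph, since any solution of \eqref{E} in $A$ necessarily lies in $U$.
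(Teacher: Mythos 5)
Your proposal is correct and follows essentially the same route as the paper: the paper also defines the map $T(V)=1+\frac{y}{x}-\frac{1}{x}V-V_x-\frac{1}{x}V_y+\frac{1}{x}\log V$ on $U$, establishes $\deg\bigl(T(V)-T(W)\bigr)\ge 1+\deg(V-W)$ so that $\Vert T(V)-T(W)\Vert\le\frac12\Vert V-W\Vert$, and invokes Banach's fixed-point theorem. You merely supply details the paper leaves implicit, notably the term-by-term degree count and the factorization $\log V-\log W=(V-W)h$ handling the logarithmic term.
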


   \begin{proof} For $V\in U$,  we define $T(V)$ as
   \begin{displaymath}
   T(V):=1+\frac{y}{x}-\frac{1}{x}V-V_x-\frac{1}{x}V_y+\frac{1}{x}\log V.
   \end{displaymath}
   It is clear that $T(V)\in U$. We may apply Banach's  fixed-point theorem. 
   Indeed, we have 
   $\deg(T(V)-T(W))\le 1+\deg(V-W)$, so that $\Vert T(V)-T(W)\Vert\le \frac12
   \Vert V-W\Vert$. 
   
   By Banach's theorem there is a unique solution to $V=T(V)$. We may obtain 
   this solution  as the limit of the sequence $T^n(1)$. In fact, since
   $\deg(T(V)-T(W))\le 1+\deg(V-W)$,  in each iteration
   we obtain one further term of the expansion.
   In this way, it is easy to prove that the solution is 
   \begin{displaymath}
   V=1+\frac{y-1}{x}+\frac{y-2}{x^2}+\cdots
   \end{displaymath}
   However, we are going to find more direct methods to compute the terms of the 
   expansion.
   \end{proof}
   
   \begin{definition}\label{defP}
   Let $V$ be the unique solution to equation \eqref{E}. Since it is in $A$,
   it has the form
   \begin{equation}\label{defV}
   V(x,y)=1+\sum_{n=1}^\infty \frac{P_{n-1}(y)}{x^n}
   \end{equation}
   where for $n\ge0$,   $P_{n-1}(y)$ is a  polynomial of degree $\le n$. 
   \end{definition}
   
   In the following sections, we prove that $V$ yields the asymptotic expansion of
   $\ali(e^x)$. For this proof the following property is crucial.
   
   \begin{theorem}\label{cancel}
   For $N\ge1$, let 
   \begin{equation}
   W(x,y)= W_N(x,y):=1 + \sum_{k=1}^N \frac{P_{n-1}(y)}{x^n}.
   \end{equation}
   Then 
   \begin{equation}\label{solapprox}
   W-1-\frac{y}{x}+\frac{1}{x}W+W_x+\frac{1}{x}W_y-\frac{1}{x}\log W=
   -\frac{P_{N}(y)}{x^{N+1}}+\frac{u(y)}{x^{N+2}}
   +\frac{v(y)}{x^{N+3}}+\cdots
   \end{equation}
   \end{theorem}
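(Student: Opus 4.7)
\medskip\noindent\textbf{Proof proposal.} I will rewrite \eqref{E} as $V=T(V)$, where
\[
T(V):=1+\tfrac{y}{x}-\tfrac{1}{x}V-V_x-\tfrac{1}{x}V_y+\tfrac{1}{x}\log V,
\]
so that the left-hand side of \eqref{solapprox} is exactly $W-T(W)$, while the true solution satisfies $V-T(V)=0$. The plan is to bootstrap this identity by writing $V=W+R$, where
\[
R:=V-W=\sum_{n\ge N+1}\frac{P_{n-1}(y)}{x^n}\in A
\]
has $\deg(R)=N+1$ with leading term $P_N(y)/x^{N+1}$, and then viewing $W-T(W)$ as a perturbation of $V-T(V)=0$.

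Substituting $W=V-R$ and separating $T$ into its part linear in $V$ and its $\log$ part yields
\[
W-T(W)=-R+\bigl(T(V)-T(V-R)\bigr)=-R-\frac{R}{x}-R_x-\frac{R_y}{x}+\frac{1}{x}\bigl(\log V-\log(V-R)\bigr).
\]
For the nonlinear term I expand $\log V-\log(V-R)=-\log(1-R/V)=\sum_{k\ge1}(R/V)^k/k$. Since $V\in U$ satisfies $\deg(V^{-1}-1)\ge 1$, one has $R/V=R+R(V^{-1}-1)$ with $\deg(R(V^{-1}-1))\ge N+2$, while $(R/V)^k$ has $\deg\ge k(N+1)\ge N+2$ for $k\ge 2$. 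Hence $\log V-\log(V-R)=R+r_1$ with $\deg(r_1)\ge N+2$, and dividing by $x$ gives $\tfrac{1}{x}\bigl(\log V-\log(V-R)\bigr)=R/x+r_2$ with $\deg(r_2)\ge N+3$.

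The crux is the cancellation of the linear $-R/x$ with the leading $+R/x$ from the log expansion: after it, $W-T(W)=-R-R_x-R_y/x+r_2$. Since $\deg(R_x)\ge N+2$ and $\deg(R_y/x)\ge N+2$, only $-R$ contributes at degree $N+1$, with the claimed coefficient $-P_N(y)$; all remaining terms live in degrees $\ge N+2$ and, as elements of $A$, assemble into the tail $u(y)/x^{N+2}+v(y)/x^{N+3}+\cdots$ of \eqref{solapprox}. The main obstacle is spotting the cancellation between $-R/x$ and the linearization of the logarithm; once that is identified, everything else is bookkeeping with the valuation $\deg$ on $A$, and no analytic input is required.
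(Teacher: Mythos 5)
Your proof is correct and follows essentially the same route as the paper: both decompose $W-T(W)=(W-V)+\bigl(T(V)-T(W)\bigr)$, use $V=T(V)$, and observe that the second summand has degree $\ge N+2$ while the first supplies the leading term $-P_N(y)/x^{N+1}$. The only difference is that you verify the degree estimate on $T(V)-T(W)$ by explicit term-by-term expansion, and the cancellation of $-R/x$ against the linearization of the logarithm that you single out as the crux is not actually needed --- each of those two terms already has degree $\ge N+2$ on its own, which is precisely the contraction property $\deg(T(V)-T(W))\ge 1+\deg(V-W)$ that the paper simply cites from its fixed-point argument.
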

   
   \begin{proof}
   By the definition of the $P_n$ we know that $\deg(V-W)\ge N+1$.  
   Therefore, $\deg(V-T(W))\ge N+2$.
   That is
   \begin{displaymath}
   V-T(W)=\frac{u_0(y)}{x^{N+2}}+\frac{v_0(y)}{x^{N+3}}+\cdots 
   \end{displaymath}
   where $u_0$, $v_0$ are polynomials. We also have
   \begin{displaymath}
   V=W+\sum_{n=N+1}^\infty \frac{P_{n-1}(y)}{x^n}
   \end{displaymath}
   so that 
   \begin{displaymath}
   W-T(W)=\frac{u_0(y)}{x^{N+2}}+\frac{v_0(y)}{x^{N+3}}+\cdots -
   \sum_{n=N+1}^\infty \frac{P_{n-1}}{x^n}.
   \end{displaymath}
   That is,
   \begin{displaymath}
   W-T(W)=-\frac{P_{N}(y)}{x^{N+1}}+\frac{u(y)}{x^{N+2}}+\frac{v(y)}{x^{N+3}}+\cdots
   \end{displaymath}
   for certain polynomials $u$, $v$, \dots
   \end{proof}

   In the sequel $V$ will  denote the unique solution to \eqref{E}. The element
   $\log V$ belongs to $A$, so that there are polynomials $Q_n(y)$ of
   degree less than or equal to $n$  such
   that 
   \begin{equation}\label{defQ}
   \log V = \sum_{n=1}^\infty \frac{Q_n(y)}{x^n}.
   \end{equation}
   From equation \eqref{E}, we may obtain $\log V$ in terms of  $V$ and its
   derivatives. It is easy to obtain from this expression the following
   relation
   \begin{equation}\label{eqQ}
   Q_n(y)=P_n(y)-(n-1) P_{n-1}(y)+P'_{n-1}(y),\qquad (n\ge1).
   \end{equation}
  
   \begin{theorem}
   The polynomials $P_n(y)$ that appear in the unique solution 
   \eqref{defV} to equation
   \eqref{E}  may be computed by the following recurrence relations:
   \begin{equation}\label{rec}
   \begin{split}
   P_0=y-1,\quad \text{and for $n\ge1$}\mskip150mu \\ P_n=n P_{n-1}-P'_{n-1}+\frac{1}{n}
   \sum_{k=1}^{n-1}k\bigl\{(k-1)P_{k-1}-P_{k}-P'_{k-1}\bigr\}P_{n-k-1}.
   \end{split}\raisetag{-16pt}
   \end{equation}
   \end{theorem}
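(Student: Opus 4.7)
The plan is to combine the relation \eqref{eqQ} with an auxiliary convolutional identity obtained by formal differentiation of $\log V$. First, I would re-derive \eqref{eqQ} together with the base case $P_0 = y-1$: multiplying \eqref{E} by $x$ and isolating $\log V$ yields
\[ \log V = (x+1)V - x - y + xV_x + V_y. \]
Since $V \in U$, the series $\log V$ has no constant term, so matching the constant coefficient on both sides forces $P_0(y) = y - 1$, while matching the coefficient of $x^{-n}$ for $n \geq 1$ reproduces \eqref{eqQ}, i.e., $Q_n = P_n - (n-1)P_{n-1} + P'_{n-1}$. Rearranging this gives the identity
\[ P_n = Q_n + (n-1)P_{n-1} - P'_{n-1}, \qquad (\ast) \]
so the task reduces to expressing $Q_n$ in terms of the lower-index $P_k$'s.

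For this, the key step is the formal identity $V \cdot (\log V)_x = V_x$, which holds in the local ring $A$ because $\partial_x$ obeys the Leibniz rule and commutes term-by-term with the series defining $\log$ on $U$. Inserting the expansions \eqref{defV} and \eqref{defQ} and reading off the coefficient of $x^{-(n+1)}$ (both $V_x$ and $(\log V)_x$ begin at order $x^{-2}$) produces the convolution
\[ n P_{n-1}(y) = n Q_n(y) + \sum_{k=1}^{n-1} k\, Q_k(y)\, P_{n-k-1}(y), \]
which solves uniquely for $Q_n$ as $P_{n-1}$ minus $\tfrac{1}{n}$ times the sum on the right.

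Substituting this into $(\ast)$ gives
\[ P_n = n P_{n-1} - P'_{n-1} - \frac{1}{n}\sum_{k=1}^{n-1} k\, Q_k(y)\, P_{n-k-1}(y), \]
and replacing each $Q_k$ inside the sum by $P_k - (k-1)P_{k-1} + P'_{k-1}$ via \eqref{eqQ} produces exactly the recurrence \eqref{rec}. The degree bound $\deg P_n \leq n+1$ is inherited automatically from Definition \ref{defP} (each operation on the right increases the degree by at most one, consistent with the bound).

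The only genuine work lies in the bookkeeping behind the convolution identity; everything else is mechanical. The main potential pitfall I foresee is keeping the index shifts consistent between the expansions of $V$, $V_x$, and $(\log V)_x$, all of which start at different orders in $x^{-1}$, and making sure that both $(\ast)$ and \eqref{eqQ} are applied with the correct index when passing between $P$'s and $Q$'s.
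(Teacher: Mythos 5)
Your proposal is correct and follows essentially the same route as the paper: the identity $V\cdot(\log V)_x=V_x$ read off at the coefficient of $x^{-(n+1)}$ gives precisely the paper's convolution \eqref{dif1}, and eliminating $Q_n$ (and then $Q_k$) via \eqref{eqQ} yields \eqref{rec} exactly as in the text. The only difference is that you re-derive \eqref{eqQ} and the base case $P_0=y-1$ from \eqref{E}, which the paper treats as already established just before the theorem; this is a harmless (and correct) addition.
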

   
   \begin{proof}
   By differentiating \eqref{defQ} with respect to $x$, we obtain
   \begin{displaymath}
   \Bigl(\sum_{n=1}^\infty \frac{nQ_n(y)}{x^{n+1}}\Bigr)
   \Bigl(1+\sum_{n=1}^\infty \frac{P_{n-1}(y)}{x^n}\Bigr)=
   \sum_{n=1}^\infty \frac{nP_{n-1}(y)}{x^{n+1}}.
   \end{displaymath}
   By equating the coefficients of $x^{-n-1}$, we obtain
   \begin{equation}\label{dif1}
   nP_{n-1}=nQ_n+\sum_{k=1}^{n-1}kQ_k P_{n-k-1},\qquad (n\ge2).
   \end{equation}
   Now we substitute the values of the $Q_n$ given in \eqref{eqQ}
   \begin{displaymath}
   nP_{n-1}=nP_{n}-n(n-1)P_{n-1}+nP'_{n-1}+\sum_{k=1}^{n-1}k
   Q_k P_{n-k-1}
   \end{displaymath}
   so that
   \begin{displaymath}
   nP_n=n^2P_{n-1}-nP'_{n-1}-\sum_{k=1}^{n-1}k
   \bigl\{P_k-(k-1)P_{k-1}+P'_{k-1}\bigr\} P_{n-k-1}.
   \end{displaymath}   
   \end{proof}
   
   From this expression it is very easy to compute the first terms of the expansions
   \begin{multline*}
   V=1+\frac{y-1}{x}+\frac{y-2}{x^2}-
   \frac{y^2-6y+11}{2x^3}+\frac{2y^3-21y^2+84y-131}{6x^4}-\\
   -\frac{3y^4-46y^3+294y^2-954y+1333}{12x^5}+
   \cdots,
   \end{multline*}
   \begin{multline*}
   \log V = \frac{y-1}{x}-\frac{y^2-4y+5}{2x^2}+\frac{2y^3
   -15y^2+42y-47}{6x^3}-\\
   -\frac{3y^4-34y^3+156y^2-366y+379}{12x^4}+\cdots
   \end{multline*}

   \begin{theorem}\label{integer}
   We have
   
   \begin{itemize} 
   
   \item[(a)] For $n\ge1$, the degree of $P_n$ is less than or equal to $n$.
   
   \item[(b)] $n! \,P_n(y)$ has integer coefficients.
   \end{itemize}
   \end{theorem}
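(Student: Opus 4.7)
The plan is to prove both statements by simultaneous induction on $n$, directly exploiting the recurrence \eqref{rec}. For the base case, $P_1 = P_0 - P'_0 = y-2$ has degree $1$ and integer coefficients. The existing bound from Definition \ref{defP} only gives $\deg P_n \le n+1$, so (a) tightens this by one for $n \ge 1$; the anomalous fact that $\deg P_0 = 1$ rather than $0$ is precisely what will make this improved bound sharp.

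For the inductive step of (a), assume $\deg P_j \le j$ for $1 \le j \le n-1$. In \eqref{rec} the terms $nP_{n-1}$ and $P'_{n-1}$ have degrees at most $n-1$ and $n-2$. Inside each summand with index $k$ ($1 \le k \le n-1$), the bracket $(k-1)P_{k-1} - P_k - P'_{k-1}$ has degree $\le k$ (the dominant contribution coming from $-P_k$; the case $k=1$ is immediate since $(k-1)P_{k-1}$ vanishes and $P'_0 = 1$). For $1 \le k \le n-2$ the factor $P_{n-k-1}$ has degree $\le n-k-1$ by the induction hypothesis, giving a product of degree $\le n-1$. The borderline case is $k = n-1$: here $P_{n-k-1} = P_0$ has degree $1$, yielding a product of degree $\le n$. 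Combining everything, $\deg P_n \le n$.

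For (b), write $A_j := j!\,P_j$, which lies in $\Z[y]$ by induction. Multiplying \eqref{rec} by $n!$ gives
\begin{displaymath}
n!\,P_n = n^2 (n-1)!\,P_{n-1} - n\bigl((n-1)!\,P_{n-1}\bigr)' + (n-1)!\sum_{k=1}^{n-1} k\bigl\{(k-1)P_{k-1}-P_k-P'_{k-1}\bigr\}P_{n-k-1}.
\end{displaymath}
The first two terms are in $\Z[y]$ since $\Z[y]$ is closed under differentiation. Within the sum, substituting $P_j = A_j/j!$ and $P'_j = A'_j/j!$ produces three subterms whose rational prefactors are
\begin{displaymath}
\frac{(n-1)!\,k(k-1)}{(k-1)!(n-k-1)!},\qquad \frac{(n-1)!\,k}{k!(n-k-1)!},\qquad \frac{(n-1)!\,k}{(k-1)!(n-k-1)!}.
\end{displaymath}
Each is an integer in view of the identity $\frac{(n-1)!}{(k-1)!(n-k-1)!} = (n-k)\binom{n-1}{k-1}\in\Z$ (valid for $1 \le k \le n-1$), so every subterm lies in $\Z[y]$ and hence $n!\,P_n \in \Z[y]$. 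The only mildly delicate point in the whole argument is the sharp degree estimate in (a), namely isolating the $k = n-1$ summand as the unique source of the degree-$n$ contribution and correctly accommodating the exceptional $\deg P_0 = 1$; part (b) then reduces to the factorial identity above.
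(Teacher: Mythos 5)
Your proof is correct. Part (b) is essentially identical to the paper's argument: both multiply \eqref{rec} by $n!$, pass to $p_k=k!P_k$, and check that the resulting rational prefactors are integers (the paper packages them as $(n-1)\binom{n-2}{k-1}$ times $k(k-1)$, $-1$, or $-k$, which is the same integrality fact as your identity $\frac{(n-1)!}{(k-1)!(n-k-1)!}=(n-k)\binom{n-1}{k-1}$). Part (a), however, is where you genuinely diverge. The paper does not touch the recurrence at all for the degree bound: it rewrites the functional equation \eqref{E} as $xV-x-y=\log V-V-xV_x-V_y$, observes that the right-hand side lies in the ring $A$, and notes that $xV-x-y=-1+\sum_{n\ge1}P_n(y)x^{-n}$, so membership in $A$ forces $\deg P_n\le n$ immediately. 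That argument is a one-line structural consequence of the ring setup and sidesteps all degree bookkeeping; in particular it silently absorbs the exceptional fact $\deg P_0=1$ through the cancellation of the $y$ term. Your route is more elementary and self-contained — it uses only the recurrence \eqref{rec} and an induction — but it pays for that with the careful case analysis you correctly carry out: the bracket $(k-1)P_{k-1}-P_k-P'_{k-1}$ has degree $\le k$, and the $k=n-1$ summand, where $P_0$ of degree $1$ appears, is the unique place a degree-$n$ term can arise. Both proofs are valid; the paper's is shorter, yours makes the mechanism by which the degree bound barely holds more visible.
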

   
   \begin{proof}
   The equation \eqref{E} may be written
   \begin{displaymath}
   V-1-\frac{y}{x}=\frac{1}{x}\bigl(\log V-V-x V_x-V_y\bigr).
   \end{displaymath}
   Since $xV_x\in A$, it is clear that 
   \begin{displaymath}
   x V-x-y= -1+\sum_{n=1}^\infty \frac{P_n(y)}{x^n}  \in A.
   \end{displaymath}
   This implies that the degree of $P_n$ is less than or equal to $n$.
   
   We prove (b) by induction. The first few $P_n$ satisfy this property.  
   We define  $p_k:=k!P_k$ so that
   the recurrence relation \eqref{rec}  may be written as    
   \begin{multline*}
   p_n=n^2p_{n-1}-np'_{n-1}+\\+(n-1)\sum_{k=1}^{n-1}\binom{n-2}{k-1}
   \bigl\{k(k-1)p_{k-1}-p_k-kp'_{k-1}\bigr\} p_{n-k-1}.
   \end{multline*}
   Hence, by induction,  all  $p_n$ have integer coefficients.
   \end{proof}

   The most significant contribution by Cipolla is his proof of a recurrence
   for the coefficients $a_{n,k}$ of $P_{n}$ (see \eqref{formP}), 
   which is better than the recurrence given in \eqref{rec}.
   We intend  to give a slightly different proof. The result of Cipolla 
   is equivalent
   to the following surprising fact: The solution $V$ of equation \eqref{E}
   formally satisfies the following linear partial differential equation:
   \begin{equation}\label{PDE}
   V=(x-1) V_y-x V_x.
   \end{equation}
   This equation can  easily be deduced  from the following Theorem.

   \begin{theorem}
   For $n\ge1$, we have
   \begin{equation}\label{Cipolla}
   \begin{aligned}
   (n-1)P_{n-1}(y)&=P'_{n-1}(y)-P'_n(y),\qquad (n\ge1)\\
   (n-1)Q_{n-1}(y)&=Q'_{n-1}(y)-Q'_n(y),\qquad (n\ge2).
   \end{aligned}
   \end{equation}
   \end{theorem}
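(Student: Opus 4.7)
The plan is to establish the stronger formal identity $DV = V$ in $A$, where $D := (x-1)\partial_y - x\partial_x$. Since $V = 1 + \sum_{n\ge 1} P_{n-1}(y)/x^n$, matching coefficients of $x^{-n}$ in $DV = V$ gives exactly the first recurrence $(n-1)P_{n-1} = P'_{n-1} - P'_n$ for $n\ge 1$; the second recurrence for $Q_n$ will then be deduced from the first via \eqref{eqQ}.

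To prove $DV = V$, set $R := DV - V \in A$ and derive a homogeneous PDE that forces $R = 0$. Applying $D$ to \eqref{E} rewritten as $(x+1)V + xV_x + V_y - \log V = x + y$, using the Leibniz rule together with the commutators $[\partial_y, D] = 0$ and $[\partial_x, D] = \partial_y - \partial_x$ (easily verified on any $V$) and the chain rule $D(\log V) = DV/V$, one obtains after collecting terms
\[ DV\bigl[(x+1) - 1/V\bigr] + x(DV)_x + (DV)_y = xV + xV_y - 1. \]
Substituting $DV = V + R$ on the left and invoking \eqref{E} once more to rewrite $\log V$ on the right, the key algebraic collapse is that $xV + xV_y - x - y - \log V$ reduces to $-V + (x-1)V_y - xV_x = -V + DV = R$, so the equation becomes
\[ xR + xR_x + R_y = R/V. \]
A leading-term analysis in the local ring $A$ then forces $R = 0$. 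Writing $R = \sum_{n\ge 0} r_n(y)/x^n$: if $r_0 \ne 0$, the term $xR$ contributes $r_0 x \notin A$ with no counterpart in $xR_x$, $R_y$, $R/V$; if instead $R$ has positive degree $m$ with leading coefficient $r_m \ne 0$, then $xR$ has a term of degree $m-1$ while $xR_x$, $R_y$, and $R/V$ all have degree $\ge m$, so the coefficient at degree $m-1$ forces $r_m = 0$. Hence $R = 0$ and $DV = V$.

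With the first recurrence in hand, the $Q_n$ recurrence follows formally. Differentiate \eqref{eqQ}, $Q_n = P_n - (n-1)P_{n-1} + P'_{n-1}$, to get $Q'_n$, and compute
\[ Q'_{n-1} - Q'_n = (P'_{n-1} - P'_n) + (n-1)P'_{n-1} - (n-2)P'_{n-2} + (P''_{n-2} - P''_{n-1}). \]
The first recurrence at index $n$ gives $P'_{n-1} - P'_n = (n-1)P_{n-1}$, and differentiating the first recurrence at index $n-1$ gives $P''_{n-2} - P''_{n-1} = (n-2)P'_{n-2}$; the two $(n-2)P'_{n-2}$ terms cancel, leaving $Q'_{n-1} - Q'_n = (n-1)(P_{n-1} + P'_{n-1})$. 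Finally, \eqref{eqQ} at index $n-1$ combined with the first recurrence at $n-1$ shows $P_{n-1} + P'_{n-1} = Q_{n-1}$, yielding the $Q$ relation for $n \ge 2$. The main obstacle is the algebraic simplification producing the clean homogeneous PDE for $R$: the commutator calculus and the double appeal to \eqref{E} require careful bookkeeping, but once that step is completed both the leading-term uniqueness argument and the transition to the $Q_n$ recurrence are short.
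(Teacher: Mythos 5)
Your proof is correct, but it takes a genuinely different route from the paper's. The paper proves both identities simultaneously by induction on $n$: it verifies the cases $n\le 3$ directly, differentiates the relation $(\log V)\,V$ with respect to $y$ to get a convolution identity for the $Q'_n$ in terms of the $P_k$, subtracts consecutive instances, and closes the induction using the hypothesis together with the $x$-derivative identity \eqref{dif1}; the $P$-identity is then extracted from the $Q$-identity via \eqref{eqQ}. You instead prove the linear PDE $V=(x-1)V_y-xV_x$ \emph{first} --- exactly the equation \eqref{PDE} that the paper presents as a \emph{consequence} of this theorem --- by applying the operator $D=(x-1)\partial_y-x\partial_x$ to the defining equation \eqref{E} and showing that the defect $R=DV-V$ satisfies $xR+xR_x+R_y=R/V$, whence $R=0$ by a leading-coefficient argument; the two recurrences then drop out by coefficient extraction and elementary algebra (your computation also produces $Q_{n-1}=P_{n-1}+P'_{n-1}$, which is the paper's \eqref{relQP}, obtained there only afterwards). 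I checked the commutators $[\partial_y,D]=0$, $[\partial_x,D]=\partial_y-\partial_x$, the collapse of $xV+xV_y-x-y-\log V$ to $R$, and the degree argument: all are sound, and the reduction of the $Q$-identity to the $P$-identity works down to $n=2$. Your approach is more structural (it explains the origin of Cipolla's recurrence and avoids any base-case verification), at the cost of working momentarily outside the ring $A$: quantities such as $xR$ and the rearranged equation $(x+1)V+xV_x+V_y-\log V=x+y$ have coefficients of degree $n+1$ at level $n$ and a term in $x^{+1}$, so the coefficient comparison should be phrased in the larger space of formal series $\sum_{n\ge -1}q_n(y)x^{-n}$ with arbitrary polynomial coefficients; this is harmless but deserves an explicit sentence. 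The paper's induction, by contrast, stays entirely inside the coefficient identities it has already established and is the more elementary of the two.
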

   
   \begin{proof}
   We will proceed by induction. For $n\le 3$ it can be verified that 
   these equalities are satisfied. 
   
   We now assume that \eqref{Cipolla} is satisfied for $n\le N$, and 
   we will show that these equations are true for $n=N+1$. 
   
   By differentiating \eqref{defQ} with respect to $y$ we get
   \begin{displaymath}
   \Bigl(\sum_{n=1}^\infty \frac{Q'_n(y)}{x^n}\Bigr)
   \Bigl(1+\sum_{n=1}^\infty \frac{P_{n-1}(y)}{x^n}\Bigr)=
   \sum_{n=1}^\infty \frac{P'_{n-1}(y)}{x^n}
   \end{displaymath}
   so that by equating the coefficients of $x^{-N-1}$ and of $x^{-N}$  we obtain
   \begin{displaymath}
   Q'_{N+1}=P'_N-\sum_{k=0}^{N-1}P_k Q'_{N-k},\qquad
   Q'_{N}=P'_{N-1}-\sum_{k=0}^{N-2}P_k Q'_{N-k-1}.
   \end{displaymath}
   Subtracting  
   these equations we get
   \begin{displaymath}
   Q'_{N+1}-Q'_N = P'_N-P'_{N-1}-\sum_{k=0}^{N-2}P_k(Q'_{N-k}-Q'_{N-k-1})-P_{N-1}
   \end{displaymath}
   and by the induction hypothesis this is equal to 
   \begin{multline*}
   -(N-1)P_{N-1}+\sum_{k=0}^{N-2}P_k\cdot(N-k-1)Q_{N-k-1} -P_{N-1}=\\
   =-NP_{N-1}+\sum_{k=1}^{N-1}kQ_kP_{N-k-1}.
   \end{multline*}
   By \eqref{dif1} this is equal to $NP_{N-1}-NQ_N$ so that we obtain
   \begin{displaymath}
   Q'_{N+1}-Q'_N =-N Q_N.
   \end{displaymath}
   This is the second equation of \eqref{Cipolla} for $n=N+1$. 
   In order to achieve the
   result for the first equation,  observe that from \eqref{eqQ} we get
   \begin{align*}
   NQ_N &= NP_N-N(N-1)P_{N-1}+NP'_{N-1}\\
   -Q'_N&= -P'_N+(N-1)P'_{N-1}-P''_{N-1}\\
   Q'_{N+1}&= P'_{N+1}-N P'_N+P''_N.
   \end{align*}
   By adding these equations we obtain
   \begin{multline*}
   0 = NP_N-P'_N+P'_{N+1}+N\bigl\{P'_{N-1}-P'_N-(N-1)P_{N-1}\bigr\}-\\-
   \bigl\{P''_{N-1}-P''_N-(N-1)P'_{N-1}\bigr\}=NP_N-P'_N+P'_{N+1}
   \end{multline*}
   which is the first equation of \eqref{Cipolla} for $n=N+1$. 
   \end{proof}
   
   We define the coefficients $a_{n,k}$ implicitly by 
   \begin{multline}\label{formP}
   P_n(y)=\frac{(-1)^{n+1}}{n!}\Bigl(a_{n,0}y^n-a_{n,1}y^{n-1}+\cdots
   (-1)^n a_{n,n}\Bigr)=\\
   =\frac{(-1)^{n+1}}{n!}\sum_{k=0}^n (-1)^k\, a_{n,k}\,y^{n-k},\qquad (n\ge1).
   \end{multline}
   Analogously, $Q_n$ is of a degree less than or equal to $n$, and we define 
   the coefficients $b_{n,k}$ implicitly by
   \begin{equation}
   Q_n(y)=\frac{(-1)^{n+1}}{n!}\sum_{k=0}^n (-1)^k\, b_{n,k}\,y^{n-k},\qquad (n\ge1).
   \end{equation}  
   
   \begin{remark}
   $P_0(y)$ has degree $1$, which is not given by \eqref{formP}. 
   However, we can extend
   the definition of $a(n,k)$ in such a way that, for $n\ge1$ we have
   $a(n,k)=0$ for $k<0$ or $k>n$. Then a formula such as \eqref{formP} also 
   holds for $n=0$ if we add up the values from $k=-1$ to $k=n$ and define 
   $a(0,0)=1$, $a(0,-1)=1$ and $a(0,k)=0$ for other values of $k$
   \begin{equation}\tag{30 bis}
   P_n(y)
   =\frac{(-1)^{n+1}}{n!}\sum_{k=-1}^n (-1)^k\, a_{n,k}\,y^{n-k},\qquad (n\ge0).
   \end{equation}
   Note that $Q_0(y)$  remains undefined.
   \end{remark} 
   
   \begin{theorem}\label{TCipolla}
   For $1\le n$ and  $0\le k<n$, we have (when defined)
   \begin{equation}\label{rec-ab}
   a_{n,k}=na_{n-1,k-1}+\frac{n(n-1)}{n-k}a_{n-1,k},\quad
   b_{n,k}=nb_{n-1,k-1}+\frac{n(n-1)}{n-k}b_{n-1,k}.
   \end{equation}
   For $1\le n$ and  $0\le k\le n$, we have
   \begin{equation}\label{mixed}
   a_{n,k}=b_{n,k}+(n-k+1)a_{n,k-1}.
   \end{equation}
   For $n\ge1$, we have
   \begin{equation}\label{endcoef}
   b_{n,n}=n a_{n-1,n-1}+\sum_{k=1}^{n-1}\binom{n-1}{k}\; k\; b_{k,k}\,a_{n-k-1,n-k-1}.
   \end{equation}
   \end{theorem}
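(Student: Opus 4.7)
The plan is to derive all three parts of Theorem \ref{TCipolla} by substituting the coefficient definitions \eqref{formP} (and its analogue for $Q_n$) into polynomial identities already proved in the previous section, and then equating coefficients of each monomial $y^j$. No further analytic input is required; everything reduces to routine bookkeeping.

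For the recurrences \eqref{rec-ab}, I would apply the first line of Theorem \ref{Cipolla}, namely $(n-1)P_{n-1}=P'_{n-1}-P'_n$. Writing each of the three polynomials in the form \eqref{formP}, and reading off the coefficient of $y^{n-1-k}$, one obtains after clearing the common factor $(-1)^{n+k}/n!$ an identity of the shape
\begin{displaymath}
n(n-1)\,a_{n-1,k}=(n-k)\,a_{n,k}-n(n-k)\,a_{n-1,k-1}.
\end{displaymath}
Since $k<n$ we may divide by $n-k$, which yields the stated recurrence for $a_{n,k}$. The recurrence for $b_{n,k}$ follows in exactly the same way from the second line of Theorem \ref{Cipolla}.

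For the cross relation \eqref{mixed} the key observation is that combining \eqref{eqQ} with Cipolla's identity collapses the three-term expression $Q_n=P_n-(n-1)P_{n-1}+P'_{n-1}$ into the compact form $Q_n=P_n+P'_n$ (because $-(n-1)P_{n-1}+P'_{n-1}=P'_n$). Equating the coefficient of $y^{n-k}$ on both sides, and noting that differentiation shifts indices by one so that $P'_n$ contributes the term in $a_{n,k-1}$, one reads off $b_{n,k}=a_{n,k}-(n-k+1)\,a_{n,k-1}$, which is \eqref{mixed}.

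For the tail-coefficient identity \eqref{endcoef} the natural move is to specialise \eqref{dif1} to $y=0$. Since \eqref{formP} gives $P_n(0)=-a_{n,n}/n!$ and analogously $Q_n(0)=-b_{n,n}/n!$, multiplying the resulting numerical identity by $-(n-1)!$ turns the coefficients $k/(k!(n-k-1)!)$ in the sum into $\binom{n-1}{k}\,k/(n-1)!\cdot(n-1)!$, producing exactly the form of \eqref{endcoef}. The only real obstacle in the whole argument is keeping track of the alternating signs $(-1)^{n+k}$ and the factorial normalisations from \eqref{formP} when composed with differentiation and index shifts; but this is purely mechanical, and each of the three claims is simply the coefficient-level shadow of an identity already available.
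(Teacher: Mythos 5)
Your proposal is correct and matches the paper's own proof essentially step for step: equating coefficients of $y^{n-k-1}$ in the identities of \eqref{Cipolla} for \eqref{rec-ab}, using the collapse $Q_n=P_n+P'_n$ (i.e.\ \eqref{relQP}) for \eqref{mixed}, and setting $y=0$ in \eqref{dif1} and multiplying by $(n-1)!$ for \eqref{endcoef}. The intermediate identity you display, $n(n-1)a_{n-1,k}=(n-k)a_{n,k}-n(n-k)a_{n-1,k-1}$, is exactly the paper's equation after clearing the sign and factorial normalisations, so there is nothing further to add.
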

   
   \begin{proof}
   \eqref{rec-ab} is obtained by equating the coefficients of $y^{n-k-1}$
   in the first equation in \eqref{Cipolla}. In this way, we obtain
   \begin{multline*}
   (n-1)\frac{(-1)^{n+k}}{(n-1)!}a_{n-1,k}=\\=
   (n-k)\frac{(-1)^{n+k+1}}{(n-1)!}a_{n-1,k-1}-(n-k)\frac{(-1)^{n+k+1}}{n!}
   a_{n,k}.
   \end{multline*}
   If $n\ne k$, then the equation for $a_{n,k}$ in \eqref{rec-ab} is obtained.
   The other equation in $b_{n,k}$ is obtained analogously from the second
   equation in \eqref{Cipolla}.
   
   To prove \eqref{mixed}, observe that by \eqref{eqQ},  
   $Q_n=P_n-(n-1) P_{n-1}+P'_{n-1}$, 
   and from \eqref{Cipolla} it follows that 
   \begin{equation}\label{relQP}
   Q_n=P_n+P'_n,\qquad (n\ge1).
   \end{equation}
   Now by equating  the coefficient of $y^{n-k}$ in both members of this equality
   we obtain \eqref{mixed}.
   
   Finally \eqref{endcoef} follows from \eqref{dif1}. 
   Recall that $-\frac{a_{n,n}}{n!}$ and $-\frac{b_{n,n}}{n!}$ are
   respectively  the values of $P_n(0)$, and $Q_n(0)$. Hence, by setting 
   $y=0$ in \eqref{dif1}, we obtain  \eqref{endcoef}  through 
   multiplication  by $(n-1)!$ and the reordering of the terms.
   \end{proof}

   The main problem now is that equations \eqref{rec-ab}
   do not allow us to compute the coefficients  $a_{n,n}$.
   Cipolla gives an algorithm to simultaneously compute the coefficients 
   $a_{n,k}$ and $b_{n,k}$ based on  Theorem \ref{TCipolla}. 
   In the procedure of 
   Cipolla, these key  coefficients $a_{n,n}$ are recursively computed 
   using all the previous coefficients.  We prefer a method that computes 
   $A_n := a_{n,n}$ and $B_n:=a_{n,n-1}$ separately  and then compute
   the remaining coefficients by  using \eqref{rec-ab}.

   \begin{theorem}\label{T:algorithm}
   In order to compute the numbers $a_{n,k}$,  we may first compute the sequences
   $A_n:=a_{n,n}$ and $B_n:=a_{n,n-1}$ by the recursions
   \begin{gather}
   A_0=1,\quad A_1=2,\quad B_0=1,\quad B_1=1,\\
   B_n=n B_{n-1}+n(n-1) A_{n-1} \label{Alg1}
   \end{gather}
   \begin{multline}\label{Alg2}
   A_n=n^2 A_{n-1}+nB_{n-1}-\\-(n-1)\sum_{k=1}^{n-1}\binom{n-2}{k-1}
   \bigl\{k(k-1)A_{k-1}-A_k+k B_{k-1}\bigr\} A_{n-k-1}.
   \end{multline}
   After this one 
   we may  obtain $a(n,k):=a_{n,k}$. Setting
   \begin{displaymath} 
   a(0,0)=1,\quad a(0,-1)=1, \quad a(1,0)=1, \quad a(1,1)=2
   \end{displaymath}
   and all other $a(0,k)$ and $a(1,k)=0$. Then,  for $n\ge2$, put
   \begin{align}\label{Alg3}
   a(n,n)&= A_n,\notag\\
    a(n,k) &= n a(n-1,k-1)+\frac{n(n-1)}{n-k}a(n-1,k), \quad
   (0\le k<n)
   \end{align}
   where   $a(n,k)=0$ for $k<0$ or $k>n$.
   
   Finally, we may obtain the $b(n,k):=b_{n,k}$ from 
   \begin{equation}\label{Alg4}
   b(n,k) = a(n,k)-(n-k+1)a(n,k-1).
   \end{equation}
   \end{theorem}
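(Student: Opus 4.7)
The plan is to verify each recurrence and each initial condition in Theorem \ref{T:algorithm} by specializing the relations already proved in Theorem \ref{TCipolla} (namely \eqref{rec-ab}, \eqref{mixed}, \eqref{endcoef}) and by evaluating the polynomial recurrence \eqref{rec} at $y=0$. Most items will reduce to a direct substitution; only \eqref{Alg2} requires a careful unwinding of factorials and binomial coefficients.

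First I would settle the initial values. The explicit expansions for $V$ displayed just after \eqref{dif1} give $P_0(y)=y-1$ and $P_1(y)=y-2$; reading these against the convention \eqref{formP} (and its extension to $n=0$ in the remark after \eqref{formP}) yields $A_0=1$, $A_1=2$, $B_0=1$, $B_1=1$, as claimed. Equation \eqref{Alg1} is then just the first identity of \eqref{rec-ab} specialized to $k=n-1$: one obtains $a_{n,n-1}=n\,a_{n-1,n-2}+n(n-1)\,a_{n-1,n-1}$, i.e.\ $B_n=nB_{n-1}+n(n-1)A_{n-1}$. Equation \eqref{Alg3} is likewise a verbatim rewriting of the first identity of \eqref{rec-ab} as an assignment, together with the boundary prescription $a(n,n)=A_n$, which plugs in the separately computed $A_n$ where \eqref{rec-ab} degenerates. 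Equation \eqref{Alg4} is just \eqref{mixed} solved for $b_{n,k}$.

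The core step is \eqref{Alg2}. From \eqref{formP} one reads off, by picking out the constant and linear coefficients in $y$, the two identities
\begin{equation*}
P_n(0)=-\frac{A_n}{n!},\qquad P'_n(0)=\frac{B_n}{n!}.
\end{equation*}
I would substitute these into the recurrence \eqref{rec} evaluated at $y=0$,
\begin{equation*}
P_n(0)=nP_{n-1}(0)-P'_{n-1}(0)+\frac{1}{n}\sum_{k=1}^{n-1}k\bigl\{(k-1)P_{k-1}(0)-P_k(0)-P'_{k-1}(0)\bigr\}P_{n-k-1}(0),
\end{equation*}
and then multiply through by $-n!$. The term $(k-1)P_{k-1}(0)-P_k(0)-P'_{k-1}(0)$ becomes $\frac{1}{(k-1)!}\bigl(-(k-1)A_{k-1}+A_k/k-B_{k-1}\bigr)$ up to a sign, and after pulling in the $-n!/n$ factor and the factorial from $P_{n-k-1}(0)$ the bracket becomes $k(k-1)A_{k-1}-A_k+kB_{k-1}$. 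The remaining factorial coefficient is
\begin{equation*}
\frac{(n-1)!}{(k-1)!\,(n-k-1)!}=(n-1)\binom{n-2}{k-1},
\end{equation*}
which supplies the $(n-1)\binom{n-2}{k-1}$ appearing in \eqref{Alg2}.

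The main obstacle is purely bookkeeping: tracking signs coming from the $(-1)^{n+1}$ and $(-1)^k$ factors in \eqref{formP}, and recognising the factorial rearrangement that produces the binomial coefficient $\binom{n-2}{k-1}$. Once that is done, \eqref{Alg2} falls out directly, and no appeal to \eqref{endcoef} is needed (though one could alternatively derive \eqref{Alg2} by combining \eqref{endcoef} with the identity $b_{n,n}=A_n-B_n$ coming from \eqref{mixed} at $k=n$; I would mention this as a cross-check but would not use it as the main route, since the $y=0$ specialization of \eqref{rec} gives \eqref{Alg2} in one clean step).
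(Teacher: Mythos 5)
Your proposal is correct and follows essentially the same route as the paper: the paper likewise obtains \eqref{Alg1} from the first equation of \eqref{Cipolla} at $y=0$ (which is exactly the $k=n-1$ case of \eqref{rec-ab} that you invoke), \eqref{Alg2} from the recurrence \eqref{rec} specialized to the constant terms $P_n(0)=-A_n/n!$ and $P_n'(0)=B_n/n!$, \eqref{Alg3} from \eqref{rec-ab}, and \eqref{Alg4} from \eqref{relQP} (equivalently, from \eqref{mixed}). Your write-up simply supplies the sign and factorial bookkeeping that the paper's very terse proof leaves implicit, and your computation checks out.
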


   \begin{proof}
   The  constant term of $P_n$ is $-\frac{A_n}{n!}$ 
   and the coefficient of $y$ in $P_n$ is 
   $\frac{B_n}{n!}$, so that
   equation \eqref{Alg1} follows from the first equation in  
   \eqref{Cipolla} taking it with $y=0$.  
   
   In the same way, \eqref{Alg2} follows from \eqref{rec}, 
   and \eqref{Alg3} is the first equation in \eqref{rec-ab}.

   Equation  \eqref{Alg4} for the $b(n,k)$  follows easily from \eqref{relQP}.
   \end{proof}   
   
   The array of  coefficients $a(n,k)$ for $0\le n, k\le 7$,   reads
   \bigskip
   
   {\small
   \begin{tabular}{rrrrrrrrr}
   1 & 0  &  0  & 0  &  0  &  0  &  0  &  0 \\ 
   1 & 2  &  0  & 0  &  0  &  0  &  0  &  0 \\ 
   1 & 6  & 11  & 0  &  0  &  0  &  0  &  0 \\
   2 & 21 & 84 & 131 &  0  &  0  &  0  &  0 \\
   6 & 92 & 588 & 1908 & 2666 &  0  &  0  &  0 \\
   24 & 490 & 4380 & 22020 & 62860 & 81534 &  0  &  0 \\
   120 & 3084 & 35790 & 246480 & 1075020 & 2823180 & 3478014 & 0 \\
   720 & 22428 & 322224 & 2838570 & 16775640 & 66811920 & 165838848 &
     196993194
   \end{tabular}
   } 
   \bigskip
   
   and the $b(n,k)$ for $1\le n\le 7$ and $0\le k\le 7$ are
   \bigskip
   
   {\small
   \begin{tabular}{rrrrrrrrr}
   1 & 1  &  0  & 0  &  0  &  0  &  0  &  0 \\ 
   1 & 4  &  5  & 0  &  0  &  0  &  0  &  0 \\ 
   2 & 15  & 42  & 47  &  0  &  0  &  0  &  0 \\
   6 & 68 & 312 & 732 &  758  &  0  &  0  &  0 \\
   24 & 370 & 2420 & 8880 & 18820 &  18674  &  0  &  0 \\
   120 & 2364 & 20370 & 103320 & 335580 & 673140 &  654834  &  0 \\
   720 & 17388 & 187656 & 1227450 & 5421360 & 16485000 & 32215008 & 31154346 \\
   \end{tabular}
   } 
   \bigskip

   \begin{theorem}

   \begin{itemize}
   \item[(a)]
   The coefficients $b(n,k)$ are integers.
   
   \item[(b)] $a(n,k)\ge0$ and $b(n,k)\ge0$.
   
   \item[(c)] $a(n,k-1)\le a(n,k)$ for $1\le k\le n$. 
   
   \item[(d)] For $n\ge1$, $a(n,0) = (n-1)!$. 
   
   \end{itemize}
   \end{theorem}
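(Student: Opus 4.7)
The plan is to handle the four parts in the order (d), (a), (b), (c), using the recurrences and identities already established in Theorems \ref{TCipolla} and \ref{T:algorithm}. Parts (a) and (d) are essentially immediate, part (b) is the crux, and (c) will follow easily from (b).

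For (d), we argue by induction on $n$. The base case $a(1,0)=1=0!$ is given by the setup in Theorem \ref{T:algorithm}. For $n\ge 2$, specializing \eqref{Alg3} to $k=0$ gives
\begin{displaymath}
a(n,0)=n\,a(n-1,-1)+\tfrac{n(n-1)}{n}\,a(n-1,0)=(n-1)\,a(n-1,0),
\end{displaymath}
since $a(n-1,-1)=0$ by the conventions; the inductive hypothesis then yields $a(n,0)=(n-1)!$. For (a), we combine \eqref{Alg4}, which reads $b(n,k)=a(n,k)-(n-k+1)\,a(n,k-1)$, with Theorem \ref{integer}(b), which states that the $a(n,k)$ are integers; hence each $b(n,k)$ is an integer.

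The heart of the matter is (b). The obstacle is that \eqref{Alg2}, the recurrence for $A_n=a(n,n)$, is a difference of positive and negative contributions, so nonnegativity is not visible from it. The key idea is to use \eqref{mixed} at $k=n$, which yields $A_n=B_n+b_{n,n}$, and to observe that $B_n$ and $b_{n,n}$ each satisfy a recurrence involving only nonnegative contributions. Concretely, we perform a joint induction on $n$, assuming that $a(m,k),b(m,k)\ge 0$ for all $m<n$ and all admissible $k$. Then \eqref{Alg1} gives $B_n\ge 0$; \eqref{endcoef} gives $b_{n,n}\ge 0$ as a sum of products of nonnegative quantities; \eqref{mixed} at $k=n$ gives $A_n=B_n+b_{n,n}\ge 0$; \eqref{Alg3} then propagates $a(n,k)\ge 0$ for $0\le k<n$; and the corresponding recurrence for $b$ in \eqref{rec-ab} propagates $b(n,k)\ge 0$ for $0\le k<n$. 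This closes the induction.

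Part (c) follows from (b): rewriting \eqref{Alg4} as
\begin{displaymath}
a(n,k)-a(n,k-1)=b(n,k)+(n-k)\,a(n,k-1),
\end{displaymath}
we see a sum of two nonnegative quantities. The main obstacle throughout is the sign issue in \eqref{Alg2}; once the decomposition $A_n=B_n+b_{n,n}$ is spotted, the rest reduces to routine bookkeeping with the established recurrences.
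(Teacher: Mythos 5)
Your proposal is correct and follows essentially the same route as the paper: (a) from \eqref{Alg4} plus the integrality of the $a(n,k)$ established in Theorem \ref{integer}, (b) by induction on $n$ using \eqref{rec-ab} for $k<n$, then \eqref{endcoef} for $b(n,n)$ and \eqref{mixed} at $k=n$ for $a(n,n)$, (c) from \eqref{mixed}, and (d) by induction via \eqref{Alg3}. The only difference is presentational: you make the decomposition $A_n=B_n+b_{n,n}$ and the base cases explicit, which the paper leaves implicit.
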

   
   \begin{proof}
   (a) We have proved in Theorem  \ref{integer} that  the numbers 
   $a(n,k)$ are integers,
   so that from \eqref{Alg4}, the coefficients $b(n,k)$ are also integers. 
   
   (b) We proceed by induction on $n$. Assuming that we have proved 
   that $a(m,k)$ and $b(m,k)$ are positive for $m<n$, it follows 
   from \eqref{rec-ab}
   that $a(n,k)$ and $b(n,k)$ are positive for $0\le k<n$.  Then 
   \eqref{endcoef} implies that $b(n,n)\ge0$, and \eqref{mixed} with $k=n$ 
   proves that $a(n,n)\ge0$. 
   
   (c) This is a simple consequence of \eqref{mixed}. 
   
   (d) The equation follows from \eqref{Alg3} by induction. 
   \end{proof}

   \begin{theorem}\label{T:complexity}
   By means of the rule in Theorem \ref{T:algorithm}, one may 
   compute all  coefficients
   $a_{n,k}$ of the polynomials $P_n(y)$ for $1\le n\le N$  in $\Orden(N^2)$
   coefficient operations.
   \end{theorem}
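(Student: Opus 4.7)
The plan is to walk through the four stages of the algorithm in Theorem \ref{T:algorithm} and verify that each runs within $\Orden(N^2)$ coefficient operations; the bottleneck will be the quadratic sum defining $A_n$.

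First, I would dispose of the easy stages. The auxiliary sequence $B_n$ is obtained from \eqref{Alg1} out of $A_{n-1}$ and $B_{n-1}$ at cost $\Orden(1)$ per step and $\Orden(N)$ in total. Once the full sequence $A_n$ is available, formula \eqref{Alg3} produces $a(n,k)$ for $0\le k<n$ from two previous-row entries and the rational factor $n(n-1)/(n-k)$; each entry thus costs $\Orden(1)$, and since there are at most $\binom{N+1}{2}$ such entries the whole stage costs $\Orden(N^2)$. The final stage \eqref{Alg4} produces each $b(n,k)$ in $\Orden(1)$ as well, again $\Orden(N^2)$ overall.

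The core of the argument is the computation of the sequence $A_n$ via \eqref{Alg2}. For each $n$, the sum has $n-1$ terms, each involving the previously computed quantities $A_{k-1}$, $A_k$, $B_{k-1}$, $A_{n-k-1}$ and the binomial $\binom{n-2}{k-1}$. I would compute these binomials along with the sum, using the one-multiplication/one-division update
\begin{displaymath}
\binom{n-2}{k}=\binom{n-2}{k-1}\cdot\frac{n-k-1}{k},
\end{displaymath}
so that each summand costs $\Orden(1)$ and $A_n$ is obtained in $\Orden(n)$ operations. Summing over $n\le N$ yields $\sum_{n=1}^{N}\Orden(n)=\Orden(N^2)$, which matches the trivial lower bound arising from the fact that the output itself consists of $\sum_{n=1}^{N}(n+1)=\Orden(N^2)$ coefficients.

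The only point requiring care, rather than calculation, is precisely this handling of the binomial coefficients: recomputing each $\binom{n-2}{k-1}$ from scratch would raise the inner cost and break the target bound. Once this observation is in place, adding the four contributions gives $\Orden(N^2)$ coefficient operations overall, as claimed.
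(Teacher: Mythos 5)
Your proposal is correct and follows essentially the same route as the paper: a stage-by-stage operation count in which the quadratic sum for $A_n$ in \eqref{Alg2} is the dominant cost, with the binomial coefficients handled in $\Orden(1)$ amortized time per summand (the paper precomputes the whole Pascal triangle up to row $N-2$ in $\Orden(N^2)$ additions, while you update $\binom{n-2}{k}$ multiplicatively inside the loop; both stay within the bound). The only substantive difference is that the paper carries the bookkeeping through to an explicit total of $\frac12(15N^2+3N-16)$ operations, whereas you stop at the asymptotic statement, which is all the theorem requires.
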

   
   \begin{proof}
   We count the operations needed, following the indications in 
   Theorem \ref{T:algorithm},
   to compute every $a_{n,k}$ for $0\le n\le N$ and $0\le k\le n$. 

   First we must compute the  numbers $\binom{m}{j}$ for $0\le m\le N-2$.
   Using the scheme of the usual triangle, we need to carry out 
   $\sum_{k=1}^{N-3}k$ additions, which 
   involves $(N-2)(N-3)/2$ operations. 

   The numbers $B_n$ must now be computed for $2\le n\le N$ by means of the formula
   \begin{displaymath}
   B_n=n*(B_{n-1}+(n-1)*A_{n-1}).
   \end{displaymath}
   Each $B_n$ requires $4$ operations, therefore a total of $4(N-1)$ operations
   are needed. 
   We  compute the $A_n$ for $2\le n\le N$ using the formula
   \begin{multline*}
   A_n=n*n*A_{n-1}+n*B_{n-1}-(n-1)*\\
   *\sum_{k=1}^{n-1}\binom{n-2}{k-1}*\{
   k*(k-1)*A_{k-1}-A_k+k*B_{k-1}\}*A_{n-k-1}.
   \end{multline*}
   Hence  $A_n$ requires
   $7+\sum_{k=1}^{n-1}8=8n-1$ operations.
   All $A_n$ together take  $\sum_{n=2}^N (8n-1)=4N^2+3N-7$ operations. 
   These numbers are the $a_{n,n}$.  The $a_{0,k}$ and $a_{1,k}$ require 
   no operations. Finally we compute for $0\le k<n$ 
   \begin{displaymath}
   a_{n,k}=n*\bigl\{a_{n-1,k-1}+(n-1)*a_{n-1,k}/(n-k)\bigr\}.
   \end{displaymath}
   Therefore, $a_{n,k}$ takes $6$ operations. For each $n$, every $a_{n,k}$ for 
   $1\le k<n$ takes $6(n-1)$ operations. And each $a_{n,k}$ for $2\le n\le N$ takes
   $\sum_{n=2}^N6(n-1)=3N(N-1)$. 

   The total cost in number of operations is therefore
   \begin{multline*}
   \frac{(N-2)(N-3)}{2}+4(N-1)+ 4N^2+3N-7+3N(N-1)=\\
   =\frac12(15N^2+3N-16).
   \end{multline*}   
   \end{proof}
   
   \section{Bounds for the asymptotic expansion.}\label{Sbounds}
   
   \subsection{The sequence $(a_n)$.}
   
   First we define a sequence of numbers as the coefficients of a formal 
   expansion in $A$.

   \begin{lemma}
   There exists a sequence of integers $(a_n)$ such that
   \begin{equation}\label{E26}
   \log\Bigl(1-\sum_{n=1}^\infty \frac{n!}{x^n}\Bigr)^{-1}=
   \sum_{n=1}^\infty \frac{a_n}{n}\frac{1}{x^n}.
   \end{equation}
   The coefficients may be computed by the recursion
   \begin{equation}\label{Ereca}
   a_1=1, \quad a_n =  n!\cdot n+ \sum_{k=1}^{n-1}k!\, a_{n-k}.
   \end{equation}
   \end{lemma}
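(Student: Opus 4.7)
The plan is to work in a formal power series ring in one variable $z$ (playing the role of $1/x$), where division, logarithms and derivatives are all available, and then to read off the recurrence by comparing coefficients.

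First, I would set $f(z):=\sum_{n=1}^{\infty}n!\,z^{n}\in z\,\Q[[z]]$. Since $1-f(z)$ has constant term $1$, it is a unit in $\Q[[z]]$, and $L(z):=\log\bigl((1-f(z))^{-1}\bigr)=-\log(1-f(z))$ is a well-defined element of $z\,\Q[[z]]$ via the usual series $\log(1+u)=u-u^{2}/2+u^{3}/3-\cdots$ applied to $u=-f(z)$. Writing $L(z)=\sum_{n\ge 1}c_{n}z^{n}$ and setting $a_{n}:=n c_{n}$, the asserted identity \eqref{E26} holds after the substitution $z\mapsto 1/x$. Thus the existence of \emph{some} sequence $(a_{n})$ is immediate; what remains is the recurrence and the integrality.

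For the recurrence, I would differentiate $L$ formally. The chain rule for formal power series gives $L'(z)=f'(z)/(1-f(z))$, i.e.\ $(1-f(z))\,L'(z)=f'(z)$. With $L'(z)=\sum_{n\ge0}a_{n+1}z^{n}$ and $f'(z)=\sum_{n\ge0}(n+1)(n+1)!\,z^{n}$, comparing the coefficient of $z^{m-1}$ on both sides yields $a_{m}-\sum_{k=1}^{m-1}k!\,a_{m-k}=m\cdot m!$, which is precisely \eqref{Ereca}; the case $m=1$ gives $a_{1}=1$, matching the stated initial value.

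Integrality then follows by strong induction on $m$: the base case $a_{1}=1\in\Z$ is immediate, and assuming $a_{1},\ldots,a_{m-1}\in\Z$ the right-hand side $m\cdot m!+\sum_{k=1}^{m-1}k!\,a_{m-k}$ is manifestly an integer, so $a_{m}\in\Z$. The only delicate point is the legitimacy of the formal identity $(\log u)'=u'/u$ for a unit $u$ with constant term $1$; but this is a standard property of the formal derivation, and in our case it reduces to termwise differentiation of the convergent (in the $I$-adic topology) series $-\log(1-f)=\sum_{k\ge 1}f^{k}/k$. I do not anticipate a genuine obstacle.
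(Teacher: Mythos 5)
Your proposal is correct and follows essentially the same route as the paper: both differentiate the formal logarithm to get $(1-f)L'=f'$, equate coefficients to obtain the recurrence, and deduce integrality by induction (the paper just works directly in its ring $A$ of series in $x^{-1}$ rather than substituting $z=1/x$ into $\Q[[z]]$). No gaps.
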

   
   \begin{proof}
   It is clear that $u=1-\sum n! x^{-n}\in U\subset A$, so that
   $u^{-1}\in U$ and $\log u^{-1}$ are well defined. To obtain the recursion
   we differentiate \eqref{E26} to obtain 
   \begin{displaymath}
   -\sum_{n=1}^\infty \frac{n\cdot n!}{x^{n+1}}=-\Bigl( 
   \sum_{n=1}^\infty \frac{a_n}{x^{n+1}}\Bigr)
   \Bigl(1-\sum_{n=1}^\infty \frac{n!}{x^n}\Bigr).
   \end{displaymath}
   Equation \eqref{Ereca} is obtained by equating the coefficients of $x^{-n-1}$.
   The recurrence \eqref{Ereca} proves that  $a_n$ is a natural number for 
   each $n\ge1$.
   \end{proof}
   
   The first terms of the sequence $(a_n)_{n=1}^\infty $ are
   \begin{displaymath}
   1, 5, 25, 137, 841, 5825, 45529, 399713, 3911785, 42302225,\dots
   \end{displaymath}
   
   \begin{lemma}
   For each natural number $n$ we have
   \begin{equation}\label{boundan}
   a_n\le 2n\cdot n!.
   \end{equation}
   \end{lemma}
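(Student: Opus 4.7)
The plan is to prove the bound by induction on $n$, feeding the inductive hypothesis into the recursion \eqref{Ereca}. The base case $n=1$ is immediate since $a_1=1\le 2=2\cdot 1\cdot 1!$. For the inductive step, assume $a_m\le 2m\cdot m!$ for every $1\le m<n$ and substitute into the recursion to obtain
\begin{displaymath}
a_n \;=\; n\cdot n! \;+\; \sum_{k=1}^{n-1} k!\,a_{n-k} \;\le\; n\cdot n! \;+\; 2\sum_{k=1}^{n-1}(n-k)\,k!\,(n-k)!.
\end{displaymath}

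The key elementary observation is that for $n\ge2$ and $1\le k\le n-1$ the binomial coefficient satisfies $\binom{n}{k}\ge n$ (its minimum on this range, attained at $k=1$ and $k=n-1$). Writing $k!(n-k)!=n!/\binom{n}{k}$ therefore yields $k!(n-k)!\le (n-1)!$ uniformly for these $k$, so
\begin{displaymath}
\sum_{k=1}^{n-1}(n-k)\,k!\,(n-k)! \;\le\; (n-1)!\sum_{k=1}^{n-1}(n-k)\;=\;(n-1)!\cdot\binom{n}{2}\;=\;\frac{(n-1)\,n!}{2}.
\end{displaymath}
Inserting this into the previous estimate gives $a_n\le n\cdot n!+(n-1)\,n!=(2n-1)\,n!\le 2n\cdot n!$, closing the induction.

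No real obstacle arises; the only mild point is to notice that the naive bound $k!(n-k)!\le (n-1)!$ is tight enough because each term in the sum over $k$ is paired with the factor $(n-k)$, and the telescoping $\sum_{k=1}^{n-1}(n-k)=\binom{n}{2}$ exactly cancels the two factors of $n$ lost in passing from $n!$ to $(n-1)!$. The constant $2$ in the statement $a_n\le 2n\cdot n!$ is therefore the natural one produced by this argument, with a slight cushion of $n!$ left over.
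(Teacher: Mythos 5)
Your proof is correct, and it follows the same overall strategy as the paper: induction on $n$ via the recursion \eqref{Ereca}, with the sum controlled by binomial-coefficient estimates. The bookkeeping differs in a way that works in your favor. The paper divides through by $n!\cdot n$, uses $\tfrac{a_{n-k}}{(n-k)!\,(n-k)}\le 2$ together with $\tfrac{n-k}{n}\le 1$, and then must split the resulting sum $\sum_k\binom{n}{k}^{-1}$ into the end terms and the middle terms (bounded by $\tfrac{2}{n(n-1)}$ each); this only closes for $n\ge 5$, so the paper checks $a_1,\dots,a_4$ by hand. You instead keep the factor $(n-k)$ inside the sum, use the uniform bound $k!\,(n-k)!=n!/\binom{n}{k}\le (n-1)!$ (valid since $\binom{n}{k}\ge n$ on $1\le k\le n-1$), and the exact evaluation $\sum_{k=1}^{n-1}(n-k)=\binom{n}{2}$ then gives $a_n\le (2n-1)\,n!$ with no case split and only the trivial base case $n=1$. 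Your version is slightly cleaner and yields a marginally sharper constant; both are elementary and of comparable length.
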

   
   \begin{proof}
   We may verify this property for $a_1$, $a_2$, $a_3$ and $a_4$ directly. 
   For $n\ge 4$
   we proceed by induction.  Assume the inequality for $a_k$ with $k<n$, so
   that by \eqref{Ereca}
   \begin{multline*}
   1\le \frac{a_n}{n!\cdot n}\le 1+
   \sum_{k=1}^{n-1}\frac{a_{n-k}}{(n-k)!\cdot(n-k)}
   \frac{n-k}{n}\binom{n}{k}^{-1}\le\\
   \le 1+2\Bigl(\frac{1}{n}+\sum_{k=2}^{n-2}\binom{n}{k}^{-1}+ 
   \frac{1}{n^2}\Bigr) \le 1+2\Bigl(\frac{1}{n}+\frac{1}{n^2}+
   (n-3)\frac{2}{n(n-1)}\Bigr).
   \end{multline*}
   For  $n\ge 4$, it is easy to see that this is  $\le 2$. 
   \end{proof}

   \begin{lemma}\label{Lsimple}
   For each natural number $N$ there is a positive constant $c_N$ such that 
   \begin{equation}\label{simpleineq}
   x\Bigl(1-\sum_{n=1}^N \frac{n!}{x^n}\Bigr)\ge 1,\qquad  x\ge c_N.
   \end{equation}
   \end{lemma}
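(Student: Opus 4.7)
The plan is to rewrite the left-hand side and observe that all but one term are bounded for $x\ge 1$, while the remaining term grows linearly. Concretely, I would distribute the $x$ and separate the $n=1$ term, writing
\begin{displaymath}
   x\Bigl(1-\sum_{n=1}^N\frac{n!}{x^n}\Bigr)=x-1-\sum_{n=2}^N\frac{n!}{x^{n-1}}.
\end{displaymath}
Thus the asserted inequality $x(1-\sum_{n=1}^N n!/x^n)\ge 1$ is equivalent to
\begin{displaymath}
   x-2\ge\sum_{n=2}^N\frac{n!}{x^{n-1}}.
\end{displaymath}

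Next I would bound the right-hand side crudely. Since $N$ is fixed and the sum is finite, for any $x\ge 1$ the estimate $n!/x^{n-1}\le n!$ holds term by term, so
\begin{displaymath}
   \sum_{n=2}^N\frac{n!}{x^{n-1}}\le\sum_{n=2}^N n!=:S_N,
\end{displaymath}
a constant depending only on $N$. Therefore, choosing $c_N:=2+S_N$ (or any larger value) makes $x-2\ge S_N$ whenever $x\ge c_N$, which gives the inequality.

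There is no real obstacle here; the statement is a finite-$N$, large-$x$ triviality once one notices that the divergent behavior of $\sum_{n\ge 1}n!/x^n$ is irrelevant for any truncation. If one prefers a cleaner constant, the same argument with the sharper bound $n!/x^{n-1}\le 2/x$ for $n=2$ and $n!/x^{n-1}\le (N-1)!/x$ uniformly for $2\le n\le N$, $x\ge N$, lets one take $c_N$ of order $N\cdot N!$; but since the lemma merely asserts \emph{existence} of $c_N$, the crude choice $c_N=2+\sum_{n=2}^N n!$ suffices.
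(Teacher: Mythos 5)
Your proof is correct: the algebraic rewriting $x\bigl(1-\sum_{n=1}^N n!/x^n\bigr)=x-1-\sum_{n=2}^N n!/x^{n-1}$ and the crude term-by-term bound $n!/x^{n-1}\le n!$ for $x\ge1$ do establish the inequality for all $x\ge 2+\sum_{n=2}^N n!$, which is all the lemma literally asserts. The paper's own proof takes a slightly different (and even shorter) route: it observes that the left-hand side is increasing in $x$ and tends to $+\infty$, so existence of $c_N$ is immediate, and it then \emph{defines} $c_N$ as the unique root $x>1$ of $x\bigl(1-\sum_{n=1}^N n!/x^n\bigr)=1$, tabulating these roots numerically (e.g.\ $c_6\approx 4.15$, $c_{60}\approx 23.4$). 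The distinction matters only downstream: the paper's $c_N$ is the \emph{optimal} constant, and it feeds into the later quantities $\alpha_N$, $\beta_N$, $x_N$ and hence into the explicit numerical thresholds of Proposition \ref{xeN} and Theorem \ref{concrete}; your constant $2+\sum_{n=2}^N n!$ grows like $N!$ and would render those later thresholds astronomically worse. Also note that Remark \ref{remark54} uses the fact that for $x\ge c_N$ the truncated sum lies in $(0,1)$, which follows from the monotonicity argument (or from your bound equally well). So for the lemma as stated your argument is complete; just be aware that the sharp root-finding definition, not mere existence, is what the rest of the paper actually consumes.
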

   
   \begin{proof}
   It is clear that the left-hand side of \eqref{simpleineq} is increasing
   and  tends to
   $+\infty$ when $x\to+\infty$, from which the existence of $c_N$ is clear.
   
   The value of $c_N$ may be determined as the solution of the equation 
   \begin{equation}
   x\Bigl(1-\sum_{n=1}^N \frac{n!}{x^n}\Bigr)=1,\qquad  x>1.
   \end{equation}
   In this way we found the following values.
   \bigskip
   
   \begin{tabular}{|c|l||c|l||c|l||c|l|} \hline
   $c_1$ & 2 & $c_6$ & 4.15213 & $c_{11}$ & 5.61664 & $c_{20}$ & \ 8.70335\\
   \hline
   $c_2$ & 2.73205 & $c_7$ & 4.43119 & $c_{12}$ & 5.93649 & $c_{30}$ & 12.34925\\
   \hline
   $c_3$ & 3.20701 & $c_8$ & 4.71412 & $c_{13}$ & 6.26449 & $c_{40}$ & 16.03475\\
   \hline
   $c_4$ & 3.56383 & $c_9$ & 5.00517 & $c_{14}$ & 6.59947 & $c_{50}$ & 19.72833\\
   \hline
   $c_5$ & 3.86841 & $c_{10}$ & 5.30597 & $c_{15}$ & 6.94035 & $c_{60}$ & 23.42351\\
   \hline
   \end{tabular}
   \bigskip
   \end{proof}
   
   \begin{remark} \label{remark54}
   Notice that for $x\ge c_N$ the sum in \eqref{simpleineq} is positive and less 
   than $1$. 
   \end{remark}
   
   \begin{proposition}\label{usean}
   For each natural number $N$ there exists $d_N>0$ such that,  for $x\in\C$ 
   with $|x|\ge d_N$, there exists $\theta$ with $|\theta|\le1$ such that
   \begin{equation}
   \log\Bigl(1-\sum_{n=1}^N \frac{n!}{x^n}\Bigr)^{-1}=
   \sum_{n=1}^N \frac{a_n}{n}\frac{1}{x^n}+
   \theta \frac{a_{N+1}}{N+1}\frac{1}{x^{N+1}},
   \qquad  |x|>d_N.
   \end{equation}
   \end{proposition}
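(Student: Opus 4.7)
The plan is to reduce the statement to a power-series calculation combined with a limiting argument. For $|x|$ sufficiently large (the condition $\bigl|\sum_{n=1}^N n!/x^n\bigr|<1$ follows easily from Lemma \ref{Lsimple} together with the triangle inequality for complex $x$) the function
$$\phi_N(x):=\log\Bigl(1-\sum_{n=1}^N \frac{n!}{x^n}\Bigr)^{-1}=\sum_{m=1}^\infty \frac{1}{m}\Bigl(\sum_{k=1}^N \frac{k!}{x^k}\Bigr)^m$$
is well defined and the series converges absolutely. Expanding and collecting powers of $1/x$ yields $\phi_N(x)=\sum_{n=1}^\infty c_n^{(N)}/x^n$, where $c_n^{(N)}$ is the sum, with positive contributions $\frac{1}{m}k_1!\cdots k_m!$, over compositions $k_1+\cdots+k_m=n$ with $1\le k_i\le N$.

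The first key observation is that for $n\le N$ the restriction $k_i\le N$ is automatic (each $k_i\le n\le N$), so comparison with the formal identity \eqref{E26} gives $c_n^{(N)}=a_n/n$ exactly. Hence the first $N$ Taylor coefficients already coincide with those prescribed in the proposition, and the task reduces to bounding
$$R_N(x):=\phi_N(x)-\sum_{n=1}^N \frac{a_n}{n\,x^n}=\sum_{n=N+1}^\infty \frac{c_n^{(N)}}{x^n}$$
by $a_{N+1}/\bigl((N+1)|x|^{N+1}\bigr)$. A direct calculation also gives $c_{N+1}^{(N)}=a_{N+1}/(N+1)-(N+1)!$: the composition $m=1$, $k_1=N+1$ is excluded by the truncation, while every composition with $m\ge 2$ automatically satisfies $k_i\le N$. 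Note that $c_{N+1}^{(N)}\ge 0$ since the recursion \eqref{Ereca} already gives $a_{N+1}\ge (N+1)(N+1)!$.

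The key step is then to set $\Phi_N(x):=x^{N+1}R_N(x)$ and observe that $\Phi_N(x)\to c_{N+1}^{(N)}=a_{N+1}/(N+1)-(N+1)!$ as $|x|\to\infty$. The ``slack'' of size $(N+1)!$ between $c_{N+1}^{(N)}$ and $a_{N+1}/(N+1)$ is exactly what is needed: if $d_N$ is chosen so that $|\Phi_N(x)-c_{N+1}^{(N)}|\le (N+1)!$ whenever $|x|\ge d_N$, then the triangle inequality combined with $c_{N+1}^{(N)}\ge 0$ yields $|\Phi_N(x)|\le a_{N+1}/(N+1)$. Setting $\theta:=(N+1)\Phi_N(x)/a_{N+1}$ gives the representation claimed in the proposition.

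The main obstacle is making this last step effective, i.e.\ producing a usable $d_N$. One must control the tail $\sum_{n=N+2}^\infty c_n^{(N)}/|x|^n$, and the obvious bound $c_n^{(N)}\le a_n/n\le 2\cdot n!$ supplied by \eqref{boundan} is useless by itself, because $\sum a_n/(n|x|^n)$ diverges for every $|x|$. The truncation at level $N$ must therefore be exploited: since the $k_i$ appearing in $c_n^{(N)}$ are capped at $N$, one has $c_n^{(N)}\le (N\cdot N!)^n$ or a similar geometric bound, and the convergence $\bigl|\sum_{k=1}^N k!/x^k\bigr|<1$ can be used directly inside the logarithmic series $\sum_{m\ge 1} S_N(x)^m/m$ to estimate the tail by a geometric series and solve explicitly for $d_N$.
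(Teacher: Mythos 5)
Your proposal is correct and follows essentially the same route as the paper: both compare the Taylor coefficients of the truncated logarithm with the $a_n/n$ from \eqref{E26}, identify the exact slack $(N+1)!$ at order $N+1$ (your $c_{N+1}^{(N)}=a_{N+1}/(N+1)-(N+1)!$ is the paper's $b_{N+1}/(N+1)$), and conclude by a limiting/absolute-convergence argument using the nonnegativity of the tail coefficients. Your explicit description of the coefficients via compositions and the verification that $c_{N+1}^{(N)}\ge 0$ merely spell out what the paper asserts by ``comparing the expansions,'' so no substantive difference remains.
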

   
   \begin{proof} By comparing the  expansions \eqref{E26} and 
   \begin{equation}\label{apartial}
   \log\Bigl(1-\sum_{n=1}^N \frac{n!}{x^n}\Bigr)^{-1}=
   \sum_{n=1}^N \frac{a_n}{n}\frac{1}{x^n}+\sum_{n=N+1}^\infty 
   \frac{b_n}{n}\frac{1}{x^n}
   \end{equation} it is clear that $\frac{b_{N+1}}{N+1}+(N+1)!=\frac{a_{N+1}}{N+1}$,  
   so that
   $b_{N+1}<a_{N+1}$. 
   
   The above expansion is convergent for all sufficiently large $|x|$, so that
   \begin{displaymath}
   \sum_{n=N+1}^\infty \frac{b_n}{n}\frac{1}{x^n}=
   \frac{b_{N+1}}{N+1}\frac{1}{x^{N+1}}g_N(x)
   \end{displaymath}
   where $\lim_{x\to\infty} g_N(x)=1$. Hence there exist sufficiently large 
   $d_N$ 
   such that 
   \begin{displaymath}
   |b_{N+1}g_N(x)|<a_{N+1},\qquad |x|>d_N.
   \end{displaymath}
   This ends the proof of the existence of 
   $d_N$.  
   
   We have 
   \begin{multline*}
   \Bigl(\log\Bigl(1-\sum_{n=1}^N \frac{n!}{x^n}\Bigr)^{-1}-
   \sum_{n=1}^N \frac{a_n}{n}\frac{1}{x^n}\Bigr)
   \frac{(N+1)x^{N+1}}{a_{N+1}}=\\=\frac{(N+1)}{a_{N+1}}
   \sum_{n=N+1}^\infty \frac{b_n}{n}\frac{1}{x^{n-N-1}}.
   \end{multline*}   
   Since all  $a_n$ and $b_n$ are positive, this is a decreasing
   function for $x\to+\infty$, and the lowest value of $d_N$ 
   will be the unique solution of 
   \begin{displaymath}
   \Bigl(\log\Bigl(1-\sum_{n=1}^N \frac{n!}{x^n}\Bigr)^{-1}-
   \sum_{n=1}^N \frac{a_n}{n}\frac{1}{x^n}\Bigr)
   \frac{(N+1)x^{N+1}}{a_{N+1}}=1.
   \end{displaymath}
   We obtain the following table of values
   \bigskip
   
   \begin{tabular}{|c|l||c|l||c|l||c|l|} \hline
   $d_1$ & 1.03922 & $d_6$ & 4.54145 & $d_{11}$ & 5.73661 & $d_{20}$ & \ 8.73298\\
   \hline
   $d_2$ & 2.38568 & $d_7$ & 4.75734 & $d_{12}$ & 6.03061 & $d_{30}$ & 12.37349\\
   \hline
   $d_3$ & 3.33232 & $d_8$ & 4.97336 & $d_{13}$ & 6.33969 & $d_{40}$ & 16.05983\\
   \hline
   $d_4$ & 3.92171 & $d_9$ & 5.20626 & $d_{14}$ & 6.66091 & $d_{50}$ & 19.75448\\
   \hline
   $d_5$ & 4.28707 & $d_{10}$ & 5.46090 & $d_{15}$ & 6.99175 & $d_{60}$ & 23.45053\\
   \hline
   \end{tabular}
   \bigskip
   \end{proof}
   
   \begin{remark}
   The numbers $d_N$ in Lemma \ref{usean} are very similar to the numbers $c_N$
   of Lemma \ref{Lsimple}. This is no more than an experimental observation, but 
   since the $c_N$ numbers  are easy to compute and  $d_N$ are somewhat elusive,
   it has been useful to start from $c_N$ as an approximation to $d_N$
   in order to compute $d_N$.
   \end{remark}
   
   \subsection{Some inequalities.}
   
   \begin{lemma}\label{comparation}
   For $u\ge2$ we have $\log\ali(u)\le 2\log u$. For $u\ge e^2$ we have 
   $\ali(u)\le 2u\log u$.
   \end{lemma}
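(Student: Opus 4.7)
The plan is to exploit that $\li$ is strictly increasing on $(1,\infty)$ with range $(-\infty,\infty)$, so that for any $M>1$ the inequality $\ali(u)\le M$ is equivalent to $u\le \li(M)$. Both statements of the lemma then reduce to lower bounds on $\li$ at specific arguments, each of which I would establish by monotonicity plus a check at a single endpoint.

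For the first assertion, $\log\ali(u)\le 2\log u$ is equivalent to $\ali(u)\le u^2$, which by the remark above is equivalent to $u\le \li(u^2)$. I would set $f(u):=\li(u^2)-u$ and compute
\begin{displaymath}
f'(u)=\frac{2u}{\log(u^2)}-1=\frac{u}{\log u}-1,
\end{displaymath}
which is strictly positive on $(1,\infty)$ since $u>\log u$ for every $u>0$ (the function $u-\log u$ has its minimum value $1$ at $u=1$). Hence $f$ is strictly increasing on $(1,\infty)$, and it suffices to verify $f(2)=\li(4)-2>0$. A crude estimate does the job: $\li(4)\ge \li(2)+\int_2^4 dt/\log t\ge \li(2)+1/\log 2$, and combining this with any easy lower bound on $\li(2)$ (for instance $\li(2)>1$, obtainable from Ramanujan's series $\li(x)=\gamma+\log\log x+\sum_{k\ge1}(\log x)^k/(k\cdot k!)$) yields $\li(4)>2$.

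For the second assertion I would bootstrap from the first rather than repeat a similar analysis. By part one, $\ali'(u)=\log\ali(u)\le 2\log u$ for every $u\ge 2$, so integrating from $2$ to $u$ gives
\begin{displaymath}
\ali(u)\le \ali(2)+\int_2^u 2\log s\,ds=\ali(2)+2u\log u-2u-4\log 2+4.
\end{displaymath}
Evaluating part one at $u=2$ supplies $\ali(2)\le e^{2\log 2}=4$, so the right-hand side is at most $2u\log u-2u+(8-4\log 2)$, and the extra constant is absorbed as soon as $2u\ge 8-4\log 2\approx 5.23$, which holds comfortably for $u\ge e^2$.

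The only step of substance is the numerical verification $\li(4)>2$ at the endpoint; everything else reduces to one derivative computation and one elementary integration.
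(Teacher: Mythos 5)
Your proof is correct. For the first inequality you follow the same skeleton as the paper: invert $\li$ to reduce $\ali(u)\le u^2$ to $u\le\li(u^2)$, and then verify the latter by elementary means. The verification itself differs in flavour: the paper bounds $\li(u^2)=\li(u)+\int_u^{u^2}dt/\log t$ from below pointwise by $1+(u^2-u)/\log u^2$ and reduces the claim to $u>2\log u$, whereas you differentiate $f(u)=\li(u^2)-u$, observe $f'(u)=u/\log u-1>0$, and check the single value $f(2)=\li(4)-2>0$; both are sound and of comparable length. The genuine divergence is in the second inequality. The paper again inverts, reducing $\ali(u)\le 2u\log u$ to $u\le\li(2u\log u)$, and leaves the verification as a ``similar easy proof.'' You instead bootstrap from part one through the differential equation $\ali'(u)=\log\ali(u)$: integrating the bound $\log\ali(s)\le 2\log s$ over $[2,u]$ and absorbing the additive constant $8-4\log2$ once $u\ge e^2$. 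This avoids a second $\li$-estimate entirely and makes explicit use of a structural fact the paper has already recorded, at the modest cost of needing the crude value $\ali(2)\le4$ (which part one supplies for free). Both routes are legitimate; yours has the small advantage of reducing the two claims to a single numerical check, namely $\li(4)>2$.
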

   
   \begin{proof}
   The first inequality is equivalent to $\ali(u)\le u^2$. Since $\li(x)$ is strictly 
   increasing, the inequality is equivalent to $u\le \li(u^2)$.
   
   For $u>2$ we have $\li(u)>\li(2)=1.04516\dots$ so that
   \begin{displaymath}
   \li(u^2)=\li(u)+\int_u^{u^2}\frac{dt}{\log t}>1+\frac{u^2-u}{\log u^2}.
   \end{displaymath}
   Hence, our inequality follows from $\frac{u^2-u}{\log u^2}>u-1$, that is from 
   $u>2\log u$. However, this last inequality is certainly true for $u>2$.
   
   The second inequality is equivalent to $u\le\li(2u\log u)$ and has a similar 
   easy proof.
   \end{proof}
   
   \begin{lemma}\label{L5.4}
   For all integers $n\ge1$ we have
   \begin{equation}
   \int_{e^{f_n}}^u\frac{(\log\log t)^{n}}{\log^{n+1} t}\,dt\le 4u
   \frac{(\log\log u)^{n}}{\log^{n+1} u},\qquad (u\ge e^{f_n})
   \end{equation}
   where $f_n= 4(n+1)/3$.
   \end{lemma}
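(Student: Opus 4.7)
The natural approach is to produce an antiderivative (or rather a majorant) for the integrand, so that the bound follows by the fundamental theorem of calculus. Specifically, I would define
\[
F(u) := u\,\frac{(\log\log u)^n}{(\log u)^{n+1}},
\]
which is (up to the constant $4$) the right-hand side of the claimed inequality. The plan is to show that, for $u\ge e^{f_n}$,
\[
F'(u)\ge \tfrac{1}{4}\cdot\frac{(\log\log u)^n}{(\log u)^{n+1}},
\]
so that integrating from $e^{f_n}$ to $u$ (and using $F(e^{f_n})\ge 0$, which holds because $e^{f_n}\ge e^{8/3}>e$, making $\log\log(e^{f_n})>0$) gives exactly the desired bound.

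First I would differentiate explicitly. Writing $g(u)=(\log\log u)^n(\log u)^{-(n+1)}$ so that $F(u)=u\,g(u)$, a direct computation yields
\[
F'(u)=g(u)+u\,g'(u)=\frac{(\log\log u)^{n-1}}{(\log u)^{n+2}}\Bigl[(\log\log u)(\log u)+n-(n+1)\log\log u\Bigr].
\]
The inequality $F'(u)\ge g(u)/4$ is, after clearing positive factors, equivalent to
\[
(\log\log u)\Bigl[\tfrac{3}{4}\log u-(n+1)\Bigr]+n\ge 0,
\]
which evidently holds as soon as $\tfrac{3}{4}\log u\ge n+1$, i.e.\ $\log u\ge \tfrac{4(n+1)}{3}=f_n$. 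This is precisely the threshold in the statement, which clarifies the choice of $f_n$.

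Once this pointwise differential inequality is established for $u\ge e^{f_n}$, integrating yields
\[
F(u)-F(e^{f_n})\ge \tfrac{1}{4}\int_{e^{f_n}}^{u}\frac{(\log\log t)^n}{(\log t)^{n+1}}\,dt,
\]
and dropping the nonnegative term $F(e^{f_n})$ gives the desired bound. There is no real obstacle: the only mildly delicate point is handling the case $n=1$, where the factor $(\log\log u)^{n-1}$ becomes $1$ and one must check that the arithmetic above remains valid (it does, since we never divided by $\log\log u$, only multiplied by it when it was positive). The whole proof is essentially the one-line observation that the derivative of $F$ almost reproduces the integrand, with the constant $4$ and the threshold $f_n$ arising from the lower-order correction terms in $F'$.
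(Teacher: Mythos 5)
Your proof is correct and uses essentially the same mechanism as the paper: one shows that the derivative of the majorant $4u(\log\log u)^n(\log u)^{-(n+1)}$ dominates the integrand for $\log u\ge f_n$, with the threshold $f_n=4(n+1)/3$ emerging from the same computation. The only cosmetic difference is that the paper first pulls $(\log\log t)^n\le(\log\log u)^n$ out of the integral by monotonicity and applies the derivative comparison to $\int e^t t^{-(n+1)}\,dt$ alone, whereas you differentiate the full majorant directly; both yield the identical condition $\tfrac34\log u\ge n+1$.
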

   
   \begin{proof}
   Notice that $f_n>1$. 
   For $t\ge e$ the function $\log\log t$ is positive and increasing so that
   \begin{displaymath}
   \int_{e^{f_n}}^u\frac{(\log\log t)^{n}}{\log^{n+1} t}\,dt\le (\log\log u)^{n}
   \int_{e^{f_n}}^u\frac{dt}{\log^{n+1} t}.
   \end{displaymath}
   It remains to be shown that
   \begin{displaymath}
   \int_{e^{f_n}}^u\frac{dt}{\log^{n+1} t}\le 
   \frac{4u}{\log^{n+1} u},\qquad (u\ge e^{f_n}).
   \end{displaymath}
   Replacing $u$ by $e^x$ this is equivalent to
   \begin{displaymath}
   \int_{f_n}^x\frac{e^t}{t^{n+1}}\,dt\le 
   \frac{4e^x}{x^{n+1}},\qquad  (x\ge f_n).
   \end{displaymath}   
   For the function 
   \begin{displaymath}
   G(x):=\frac{4e^x}{x^{n+1}}-\int_{f_n}^x\frac{e^t}{t^{n+1}}\,dt
   \end{displaymath}
   we have
   \begin{displaymath}
   G'(x)=\frac{e^x}{x^{n+1}}\Bigl(4-\frac{4(n+1)}{x}-1\Bigr)
   \end{displaymath}
   so that for  $x>4(n+1)/3$ we obtain  $G'(x)>0$.
   Since $G(f_n)>0$ we have $G(x)>0$ for all $x>f_n$. 
   \end{proof}

   \begin{theorem}\label{PboundP}
   The polynomials $P_n(y)$ defined in \eqref{defV} satisy the inequalities
   \begin{equation}\label{boundP}
   |P_n(y)|\le 3\cdot n!\, y^{n},
   \qquad y\ge2,\quad n\ge 1
   \end{equation}
   and $|P_0(y)|\le y$ for $y\ge2$.
   \end{theorem}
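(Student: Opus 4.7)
The plan is a strong induction on $n$, carrying along the auxiliary derivative bound $|P_n'(y)|\le 3\,n!\,y^{n-1}$ alongside the target inequality, both for $y\ge 2$ and $n\ge 1$. The case $n=0$ is immediate from $P_0(y)=y-1$. The derivative bound for $n\ge 1$ propagates via the first identity of \eqref{Cipolla}, namely $P_n'=P_{n-1}'-(n-1)P_{n-1}$: substituting the inductive bounds together with $y^{n-2}\le y^{n-1}/2$ (valid for $y\ge 2$) yields $|P_n'(y)|\le 3(n-\tfrac12)(n-1)!\,y^{n-1}\le 3\,n!\,y^{n-1}$.

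The main step is the bound $|P_n(y)|\le 3\,n!\,y^n$, which I obtain from the recursion \eqref{rec}. After taking absolute values and substituting the inductive bounds on $P_k$, $P_{k-1}$, $P_{k-1}'$, and $P_{n-k-1}$, each bracket $(k-1)P_{k-1}-P_k-P_{k-1}'$ in the convolution is dominated, for $y\ge 2$, by $\tfrac{21}{4}\,k!\,y^k$ (from $1+\tfrac12+\tfrac14=\tfrac74$). Writing $k!(n-k-1)!=(n-1)!/\binom{n-1}{k}$, the convolution is bounded by $\tfrac{63}{4}(n-1)!\,y^{n-1}\sum_{k=1}^{n-2}k/\binom{n-1}{k}$ plus a boundary contribution at $k=n-1$ in which $|P_{n-k-1}|=|P_0(y)|\le y$.

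The key combinatorial input is the symmetry identity $\sum_{k=0}^{n-1}k/\binom{n-1}{k}=\tfrac{n-1}{2}\sum_{k=0}^{n-1}1/\binom{n-1}{k}$ (obtained by pairing $k$ with $n-1-k$), combined with the uniform estimate $\sum_{k=0}^{m}1/\binom{m}{k}\le\tfrac{8}{3}$; these yield $\sum_{k=1}^{n-1}k/\binom{n-1}{k}\le \tfrac{4(n-1)}{3}$. Assembling and using $y^{n-1}\le y^n/2$, one arrives at
\[
\frac{|P_n(y)|}{n!\,y^n}\le \frac{3}{2}+\frac{3}{4n}+\frac{63(n-1)}{8n^2},
\]
which is at most $3$ for every $n\ge 5$. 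The four small cases $n=1,2,3,4$ are verified directly from the explicit polynomials displayed just before Theorem \ref{integer}.

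The principal difficulty is controlling the convolution sum without losing the constant $3$. A naive estimate $k!(n-k-1)!\le n!$ would inflate the prefactor by an extra factor of $n$; this is overcome by the binomial-symmetry identity (which saves that factor of $n$) combined with the slack $y\ge 2$ (which supplies the final $\tfrac12$ that turns the leading coefficient from $3n$ into $\tfrac{3}{2}$). The companion derivative bound and the identity from \eqref{Cipolla} are essential for closing the induction without introducing separate estimates for $P_n'$.
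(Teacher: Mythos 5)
Your strategy is fundamentally the same as the paper's --- induct on the recursion \eqref{rec}, tame the convolution with the symmetry of the binomial coefficients, and exploit the slack $y\ge 2$ to convert each stray $y^{-1}$ into a factor $\tfrac12$, checking an initial block of $n$ by hand --- but your technical implementation differs. The paper works with the coefficient norm $\Vert P\Vert=\sum_j|a_j|\,2^j$ (equivalently $-P_n(-2)$), which is submultiplicative and satisfies $\Vert P'\Vert\le\tfrac{N}{2}\Vert P\Vert$, so products and derivatives are handled by one algebraic device and the whole theorem reduces to $-P_n(-2)\le 3\cdot 2^n n!$; you instead work pointwise with $|P_n(y)|$ and must carry a companion bound $|P_n'(y)|\le 3\,n!\,y^{n-1}$ through the induction via the identity $P_n'=P_{n-1}'-(n-1)P_{n-1}$ from \eqref{Cipolla}. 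That is a legitimate alternative (the identity is established before the bounds section), and your sharper combinatorial input $\sum_{k}\binom{m}{k}^{-1}\le\tfrac83$ is correct, though it needs its own small proof where the paper gets by with the cruder term-by-term bound $\binom{n}{k}^{-1}\le\binom{n}{2}^{-1}$.

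There is, however, an arithmetic slip in your convolution estimate. The boundary term $k=n-1$ uses $|P_0(y)|\le y$, which carries an \emph{extra} power of $y$ relative to the pattern $3\,(n-k-1)!\,y^{n-k-1}$; that term lands at order $y^{n}$ already and admits no further $y\ge2$ saving, contributing $\tfrac{21(n-1)}{4n^2}$ to $|P_n(y)|/(n!\,y^n)$, while the interior sum $1\le k\le n-2$ contributes $\tfrac{63}{4n}\cdot\tfrac{4(n-1)}{3}\cdot\tfrac{1}{n}\cdot\tfrac12=\tfrac{21(n-1)}{2n^2}$. The total is $\tfrac{63(n-1)}{4n^2}$, twice your stated $\tfrac{63(n-1)}{8n^2}$. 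With the corrected constant the inequality
\begin{displaymath}
\frac{3}{2}+\frac{3}{4n}+\frac{63(n-1)}{4n^2}\le 3
\end{displaymath}
holds only for $n\ge 10$, not $n\ge 5$, so you must verify the base cases up to $n=9$ directly --- beyond the four polynomials displayed in the paper, hence requiring you to generate $P_5,\dots,P_9$ from the recursion and bound them on $y\ge2$ (most easily by exactly the paper's trick of evaluating $-P_n(-2)$). This is a fixable bookkeeping gap, not a flaw in the method; the paper itself verifies $n\le 15$ by hand before its induction takes over.
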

   
   \begin{proof}
   Since $P_0(y)=y-1$, the second assertion is trivial.

   Given $r>0$, for each polynomial  $P(x)=\sum_{n=0}^N a_n x^n$ we define
   \begin{displaymath}
   \Vert P\Vert=\sum_{n=0}^N |a_n| r^n.
   \end{displaymath}
   It is easy to show that 
   \begin{displaymath}
   \Vert P+Q\Vert\le \Vert P\Vert+\Vert Q\Vert,\qquad 
   \Vert PQ\Vert\le \Vert P\Vert\cdot \Vert Q\Vert
   \end{displaymath}
   and that for the derivative of a polynomial of degree $\le N$
   \begin{displaymath}
   \Vert P'\Vert=\sum_{n=0}^N n|a_n| r^{n-1}\le 
   \frac{N}{r}\sum_{n=0}^N |a_n| r^{n}=\frac{N}{r}\Vert P\Vert.
   \end{displaymath}
   For $y\ge r$ we have the inequality
   \begin{displaymath}
   |P(y)|=\Bigl|\sum_{n=0}^N a_n y^n\Bigr|\le \sum_{n=0}^N |a_n| y^n\le 
   \sum_{n=0}^N |a_n|r^n (y/r)^n\le (y/r)^N\Vert P\Vert.
   \end{displaymath}
   
   Hence, our Theorem follows if it can be shown that for $n\ge1$ we have 
   $\Vert P_n\Vert\le 3\cdot 2^n\,n!$ (for $r=2$).
   
   Define $S_n:=\Vert P_n\Vert$. By \eqref{formP} we have $S_n=-P_n(-2)$, 
   and it can be shown that $S_n\le 3\cdot 2^n\,n!$ for $0\le n\le 15$. 
   
   For $n>15$ it follows from \eqref{rec}  and the aforementioned properties 
   of $\Vert P\Vert$
   that 
   \begin{displaymath}
   S_n\le nS_{n-1}+\frac{n}{2}S_{n-1}+\frac{1}{n}\sum_{k=1}^{n-1}k
   \Bigl((k-1)S_{k-1}+S_k+\frac{k}{2}S_{k-1}\Bigr)S_{n-k-1}.
   \end{displaymath}
   It follows that  $S_n\le T_n$ where    $T_n:=S_n\le 3\cdot 3^n\,n!$ 
   for $0\le n\le 15$,  and that
   for $n>15$ 
   \begin{displaymath}
   T_n:=\frac{3n}{2}T_{n-1}+\frac{1}{n}\sum_{k=1}^{n-1}\Bigl(k T_k+\frac{k(3k-2)}{2}
   T_{k-1}\Bigr)T_{n-k-1}.
   \end{displaymath}
   Now we proceed by induction. For $n>15$ and assuming that 
   we have proved $T_k\le 3\cdot 2^k \,k!$ for $k<n$, we obtain 
   \begin{multline*}
   T_n\le   
   \frac{9n}{2}2^{n-1}(n-1)!+\\+\frac{9}{n}
   \sum_{k=1}^{n-1}\Bigl(k 2^k k!+\frac{k(3k-2)}{2}
   2^{k-1}(k-1)!\Bigr)2^{n-k-1}(n-k-1)!.
   \end{multline*}
   Hence
   \begin{multline*}
   \frac{T_n}{3\cdot 2^n\,n!}\le \frac{3}{4}+\frac{3}{n}\sum_{k=1}^{n-1}
   \Bigl(\frac{k\cdot k! (n-k-1)!}{2\cdot n!}+\frac{(3k-2) k!(n-k-1)!}{8\cdot n!}
   \Bigr)\le \\ \le 
   \frac{3}{4}+\frac{3}{8n^2}\sum_{k=1}^{n-1}\frac{7k-2}{\binom{n-1}{k}}\le 
   \frac{3}{4}+\frac{3(7n-9)}{8n^2}+\frac{3}{8n^2}
   \sum_{k=1}^{n-2}\frac{7k-2}{\binom{n-1}{k}}.
   \end{multline*}
   Therefore, by using the symmetry of the combinatorial numbers, we obtain 
   \begin{multline*}
   \frac{T_n}{3\cdot 2^n\,n!}\le \frac{3}{4}+\frac{3(7n-9)}{8n^2}+\frac{3}{16n^2}
   \sum_{k=1}^{n-2}\frac{7n-11}{\binom{n-1}{k}}\le \\ \le
   \frac{3}{4}+\frac{3(7n-9)}{8n^2}+\frac{3}{16n^2}\cdot (n-2)\frac{7n-11}{n-1}\le \\
   \le
   \frac{3}{4}+\frac{3(7n-9)}{8n^2}+\frac{3}{16n^2}\cdot (7n-11) = 
   1-\frac{n(4n-63)+87}{16n^2}<1
   \end{multline*}
   for $n>15$.    
   \end{proof}
   
   \begin{corollary}\label{little}
   We have 
   \begin{equation}\label{Pineq}
   |P_{n-1}(y)|\le n!\,y^n,\qquad n\ge1, \quad y\ge2.
   \end{equation}
   \end{corollary}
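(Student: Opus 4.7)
The plan is to derive the inequality directly from Theorem \ref{PboundP} by splitting into the two cases $n=1$ and $n\ge 2$, since the corollary is essentially a reindexing of that theorem with a slightly different constant.

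For $n=1$, the inequality $|P_0(y)|\le 1!\cdot y = y$ is precisely the second assertion of Theorem \ref{PboundP}, so nothing needs to be done.

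For $n\ge 2$, I would apply Theorem \ref{PboundP} with the index $n-1$ (which satisfies $n-1\ge 1$) to get
\begin{displaymath}
|P_{n-1}(y)|\le 3\cdot(n-1)!\,y^{n-1},\qquad y\ge 2.
\end{displaymath}
The goal then reduces to checking that $3\cdot(n-1)!\,y^{n-1}\le n!\,y^n$, equivalently $3\le n\,y$. Since $n\ge 2$ and $y\ge 2$, we have $ny\ge 4>3$, and the desired bound follows immediately.

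There is no real obstacle here: the only thing to notice is that the factor $3$ in Theorem \ref{PboundP} can be absorbed by the extra factor of $n\,y$ gained when passing from the bound on $P_{n-1}$ in terms of $(n-1)!\,y^{n-1}$ to a bound in terms of $n!\,y^n$. The cleanliness of the constant in the corollary (namely $n!$ instead of $3\cdot n!$) is precisely what this absorption produces, and it is exactly this cleaner form that will be convenient in the subsequent estimates of the remainder of the asymptotic expansion.
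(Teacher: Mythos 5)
Your argument is correct and is exactly the intended derivation: the paper itself only says the corollary ``follows easily from the above Theorem,'' and your case split ($n=1$ using the bound $|P_0(y)|\le y$, and $n\ge2$ absorbing the factor $3$ into $ny\ge4$) is the natural way to fill that in. Nothing further is needed.
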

   
   \begin{proof}
   This follows easily from the above Theorem.
   \end{proof}
   
   \subsection{Main inequalities.}
   
   To simplify our formulae we introduce some notation. 
   First we set $r_n:=3\cdot n!$ so that, for $n\ge1$, we have
   $|P_n(y)|\le r_n y^n$ when $y>2$. 
   
   Let $c_n$ and $d_n$ be the constants introduced in Lemma \ref{Lsimple} and 
   Proposition \ref{usean}. Let $\alpha_n$ be equal to $\max(e, c_n,d_n)$ and let 
   $\beta_n\ge e$ be the solution of the equation
   \begin{equation}
   \frac{x}{\log x}=\alpha_n.
   \end{equation}
   (The function $\frac{t}{\log t}$ is increasing for $t\ge e$).
   
   Finally, define $x_n:=\max(\beta_n, f_n, e^2)$, where $f_n$ is defined in 
   Lemma \ref{L5.4}.
   
   \begin{proposition}\label{Pxnnumber}
   Let $x$ be a  real number such that $x\ge x_n$, and set $y:=\log x$. Then
   \begin{displaymath}
   y\ge2,\qquad x \ge c_n y,\qquad x \ge d_n y, \qquad x\ge f_n.
   \end{displaymath}
   \end{proposition}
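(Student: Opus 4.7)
The plan is simply to unpack the definitions of $x_n$, $\beta_n$, and $\alpha_n$ and verify each of the four stated inequalities in turn. None of them requires serious work; the whole point of the rather involved definitions preceding the statement is that they were rigged precisely so that these four inequalities hold at once.

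First, from $x_n = \max(\beta_n, f_n, e^2)$ and $x \ge x_n$, I immediately obtain $x \ge e^2$ and $x \ge f_n$. The former yields $y = \log x \ge 2$, which settles the first and last of the four inequalities.

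For the middle two inequalities I would use the monotonicity of $t \mapsto t/\log t$ on $[e, +\infty)$, explicitly noted just before the definition of $\beta_n$. Since $\beta_n \ge e$ satisfies $\beta_n/\log\beta_n = \alpha_n$ and since $x \ge \beta_n \ge e$, monotonicity gives
\begin{displaymath}
\frac{x}{\log x} \;\ge\; \frac{\beta_n}{\log\beta_n} \;=\; \alpha_n.
\end{displaymath}
By definition $\alpha_n = \max(e, c_n, d_n) \ge c_n$ and $\alpha_n \ge d_n$. Because $\log x \ge 2 > 0$, multiplying through by $\log x$ yields $x \ge \alpha_n\,\log x \ge c_n\,y$ and, identically, $x \ge d_n\,y$.

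The only thing to be careful about is that monotonicity of $t/\log t$ is applied on the right domain; this is guaranteed by $\beta_n \ge e$ (built into the definition) and $x \ge \beta_n$, so there is no genuine obstacle. The proof is essentially a one-line verification once the definitions are written out.
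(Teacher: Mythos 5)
Your proof is correct and follows exactly the same route as the paper: extract $x\ge e^2$ and $x\ge f_n$ directly from the definition of $x_n$, then use the monotonicity of $t/\log t$ on $[e,+\infty)$ together with $\beta_n/\log\beta_n=\alpha_n=\max(e,c_n,d_n)$ to get the remaining two inequalities. Nothing to add.
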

   \begin{proof}
   Since $x\ge x_n=\max(\beta_n, f_n, e^2)$ we have $x\ge e^2$, so that
   $y = \log x\ge2$.
   
   We also have $x\ge \beta_n\ge e $. Since $\frac{t}{\log t}$ is 
   an increasing function
   for $t\ge e$    we obtain
   $\frac{x}{\log x}\ge\frac{\beta_n}{\log \beta_n}=\alpha_n=\max(e,c_n,d_n)$. 
   Therefore,
   $\frac{x}{y}\ge c_n$ and $\frac{x}{y}\ge d_n$ as required.
   \end{proof}
   
   We insert a table of the constants $x_n$.
   \bigskip
   
   \begin{tabular}{|c|l||c|l||c|l||c|l|} \hline
   $x_1$ & 7.38906 & $x_6$ & 10.81135 & $x_{11}$ & 16.00000 & $x_{20}$ &\  29.57923\\
   \hline
   $x_2$ & 7.38906 & $x_7$ & 11.70187 & $x_{12}$ & 17.33333 & $x_{30}$ &\ 47.86556\\
   \hline
   $x_3$ & 7.38906 & $x_8$ & 12.60164 & $x_{13}$ & 18.66667 & $x_{40}$ &\ 67.69154\\
   \hline
   $x_4$ & 8.29874 & $x_9$ & 13.58167 & $x_{14}$ & 20.00000 & $x_{50}$ &\ 88.57644\\
   \hline
   $x_5$ & 9.77283 & $x_{10}$ & 14.66667 & $x_{15}$ & 21.42740 & $x_{60}$ & 110.29065\\
   \hline
   \end{tabular}
   \bigskip
   
   For each natural number $N$ we set
   \begin{equation}\label{defW}
   W_N=1+\sum_{n=1}^N\frac{P_{n-1}(y)}{x^n}
   \end{equation}
   and frequently we  write $W:=W_N$  when $N$ is fixed. 
   
   \begin{proposition}\label{xeN}
   For $N\ge1$  let $W=W_N$ (as in \eqref{defW}). Then
   for $x\ge x_N$ and $y=\log x$ there exists $\theta$ with $|\theta|\le1$ 
   such that
   \begin{equation}\label{E32}
   W+xW+xW_x+W_y-x-y-\log W=\theta\cdot  r_{N+1} \frac{y^{N}}{x^N}.
   \end{equation}
   \end{proposition}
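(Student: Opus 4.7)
The plan is to split the expression $E := W + xW + xW_x + W_y - x - y - \log W$ into a polynomial leading part and a logarithmic tail and bound each separately. A direct algebraic expansion --- substituting the definitions of $W$, $xW_x$, $W_y$, and then applying relation \eqref{eqQ} in the form $(1-n)P_{n-1}(y) + P'_{n-1}(y) = Q_n(y) - P_n(y)$ --- yields, after the constant terms $(1-y)$ and $P_0(y) = y-1$ cancel,
\[
W + xW + xW_x + W_y - x - y \;=\; \sum_{n=1}^N \frac{Q_n(y)}{x^n} - \frac{P_N(y)}{x^N} \;=:\; S_N - \frac{P_N(y)}{x^N}.
\]
Thus $E = -P_N(y)/x^N + (S_N - \log W)$, and the first piece is controlled by Theorem \ref{PboundP}: since Proposition \ref{Pxnnumber} gives $y \ge 2$, we have $|P_N(y)| \le r_N y^N = 3\,N!\,y^N$.

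The main work is to bound $|S_N - \log W|$. By Proposition \ref{Pxnnumber}, $z := x/y$ satisfies $z \ge c_N$, so Lemma \ref{Lsimple} combined with Corollary \ref{little} gives $|W-1| \le \sum_{n=1}^N n!/z^n < 1$, making $\log W = \sum_{k \ge 1} (-1)^{k+1}(W-1)^k/k$ convergent. Regrouping this double sum by powers of $1/x$, write $\log W = \sum_{m \ge 1}\tilde Q_m(y)/x^m$. Since each $\tilde Q_m(y)$ depends only on $P_0,\dots,P_{m-1}$, for $m \le N$ it coincides with the coefficient $Q_m(y)$ appearing in the formal expansion of $\log V$, so $S_N - \log W = -\sum_{m>N}\tilde Q_m(y)/x^m$. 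Applying the triangle inequality termwise in the multi-sum defining $\tilde Q_m(y)$ and using $|P_{n-1}(y)| \le n!\,y^n$ yields
\[
|\tilde Q_m(y)| \le b_m(N)\, y^m, \qquad b_m(N) \;:=\; [z^{-m}]\,\log\Bigl(1 - \sum_{n=1}^N \frac{n!}{z^n}\Bigr)^{-1},
\]
so that $|S_N - \log W| \le \sum_{m > N} b_m(N)/z^m$. Since the hypothesis $x \ge x_N$ also yields $z \ge d_N$, Proposition \ref{usean} controls this tail by $a_{N+1}/((N+1)z^{N+1})$, and Lemma \ref{boundan} further gives $|S_N - \log W| \le 2(N+1)!\,y^{N+1}/x^{N+1}$.

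Combining both estimates,
\[
|E| \le \frac{y^N}{x^N}\Bigl(3\,N! + \frac{2(N+1)!\,y}{x}\Bigr);
\]
since $x/y \ge c_N \ge 2$ (from the table preceding Proposition \ref{xeN}), the parenthesis is at most $3\,N! + (N+1)! = (N+4)\,N! \le 3(N+1)! = r_{N+1}$, which is the required inequality. The principal obstacle is the middle paragraph --- isolating the right positive majorant for $\tilde Q_m(y)$ so that the already proved Proposition \ref{usean} applies cleanly, and justifying the formal coefficient-matching $\tilde Q_m = Q_m$ for $m \le N$; the initial algebraic reduction and the final arithmetic combination are routine.
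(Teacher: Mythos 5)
Your proof is correct and follows essentially the same route as the paper: the same decomposition into $-P_N(y)/x^N$ plus the tail of $\log W$, the same majorization of that tail by $\log\bigl(1-\sum_{n\le N} n!\,(y/x)^n\bigr)^{-1}$ so that Proposition \ref{usean} and the bound $a_{N+1}\le 2(N+1)(N+1)!$ apply, and the same final combination (you carry out the cancellation down to $\sum Q_n x^{-n}-P_N x^{-N}$ by hand via \eqref{eqQ} where the paper cites Theorem \ref{cancel}, and you close the arithmetic with $x/y\ge c_N\ge 2$ instead of $x\ge e^2$, but these are cosmetic differences).
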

  
   \begin{proof}
   Denote by 
   $T=T(x,y)$ the value of $W+xW+xW_x+W_y-x-y-\log W$. Then we have
   \begin{multline*}
   T=(1+x)\sum_{n=0}^N \frac{P_{n-1}(y)}{x^n}
   -\sum_{n=1}^N\frac{nP_{n-1}(y)}{x^{n}}
   +\sum_{n=1}^N\frac{P'_{n-1}(y)}{x^n}-\\
   -x-y
   +\log\Bigl(1+\sum_{n=1}^N \frac{P_{n-1}(y)}{x^n}\Bigr)^{-1}.
   \end{multline*}
   From \eqref{defQ} we have  the expansion
   \begin{equation}\label{defU}
   \log\Bigl(1+\sum_{n=1}^\infty \frac{P_{n-1}(y)}{x^n}\Bigr)^{-1}
   =-\sum_{n=1}^\infty\frac{Q_n(y)}{x^n}.
   \end{equation}
   From Proposition \ref{Pxnnumber} we know that $y=\log x >2$ and $x\ge y d_N$.
   From \eqref{Pineq}, for $y>2$, we have $|P_{n-1}(y)|\le n! y^n$ so that 
   we have the 
   majorant 
   \begin{equation}\label{majorant}
   \log\Bigl(1+\sum_{n=1}^N \frac{P_{n-1}(y)}{x^n}\Bigr)^{-1}
   \ll\log\Bigl(1-\sum_{n=1}^N \frac{n!}{(x/y)^n}\Bigr)^{-1}
   \end{equation}
   (by considering this expression as a power series in $x^{-1}$, 
   and $y$ as a parameter).
   From \eqref{defU} and \eqref{majorant},
   we  obtain
   \begin{equation}\label{valuelog}
   \log\Bigl(1+\sum_{n=1}^N \frac{P_n(y)}{x^n}\Bigr)^{-1}=-
   \sum_{n=1}^N\frac{Q_n(y)}{x^n}+ S_N(x,y)
   \end{equation}
   where $S_N(x,y)$ is a power series majorized by the Taylor expansion of
   \begin{displaymath}
   \log\Bigl(1-\sum_{n=1}^N \frac{n!}{(x/y)^n}\Bigr)^{-1}-\sum_{n=1}^N
   \frac{a_n}{n}\frac{1}{(x/y)^n}
   \end{displaymath}
   (compare equation \eqref{apartial}).
   
   By applying Proposition \ref{usean} we deduce that, for $x>y d_N$, there exists
   $\theta $ with $|\theta|\le 1$ and 
   \begin{equation}
   S_N(x,y)= \theta\frac{a_{N+1}}{N+1}
   \frac{y^{N+1}}{x^{N+1}}.
   \end{equation}   
   
   If we substitute \eqref{valuelog} in the expression for $T$, then by 
   Theorem \ref{cancel},  all the terms 
   in $x^{-n}$ with $n<N$ cancel out, and the terms in $x^{-N}$ 
   add up to $-P_{N}(y)x^{-N}$. It follows that 
   \begin{equation}
   T = -\frac{P_{N}(y)}{x^N}+S_N(x,y).
   \end{equation}
   Therefore, since  $y>2$,  we  have 
   \begin{displaymath}
   |T|\le r_N\frac{  y^{N}}{x^N}+\frac{a_{N+1}}{N+1}
   \frac{y^{N+1}}{x^{N+1}}
   \end{displaymath}
   so that from  \eqref{boundan}  ,
   \begin{displaymath}
   |T|\le \frac{ y^{N}}{x^N}\Bigl(3\cdot  N!+ 2\cdot(N+1)!\frac{\log x}{x}\Bigr)\le 
   \frac{3 \cdot (N+1)! y^{N}}{x^N}=r_{N+1}\frac{y^{N}}{x^N}
   \end{displaymath}   
   where $N\ge1$ and $\frac{3}{2}+\frac{2\log x}{x}\le 3$
   for $x\ge e^2$ are applied. 
   \end{proof}
   
   \begin{proposition}\label{5.8}
   For each natural number $N$  let  $u_N=e^{x_N}$. Then 
   there exists $v_N>u_N$ such that 
   \begin{equation}
   \li(f_N(u))-u=\theta\cdot 13 (N+1)!\,
   \frac{u(\log\log u)^N}{\log^{N+1}u},\qquad (u>v_N)
   \end{equation}
   where $|\theta|\le1$ and 
   \begin{equation}\label{EE:5.20}
   f_N(e^x):=xe^xW_N(x,\log x)=
   xe^x\Bigl(1+\sum_{n=1}^N\frac{P_{n-1}(\log x)}{x^n}\Bigr).
   \end{equation}   
   \end{proposition}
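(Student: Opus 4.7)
The plan is to compute $\li(f_N(u)) - u$ by differentiation, to simplify the derivative via Proposition \ref{xeN}, and then to integrate the resulting estimate using Lemma \ref{L5.4}.

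First, set $x=\log u$ and $y=\log x=\log\log u$, so that $f_N(u)=xe^x W$ where $W=W_N(x,y)$. A direct calculation (using $dx/du=1/u=e^{-x}$ and $dy/du=1/(ux)$) gives
$$\frac{d}{du}f_N(u)=(1+x)W+xW_x+W_y,\qquad \log f_N(u)=x+y+\log W.$$
Hence
$$\frac{d}{du}\bigl[\li(f_N(u))-u\bigr]=\frac{f_N'(u)}{\log f_N(u)}-1=\frac{(1+x)W+xW_x+W_y-(x+y+\log W)}{x+y+\log W}.$$
By Proposition \ref{xeN}, for $x\ge x_N$ the numerator equals $\theta\, r_{N+1}\,y^N/x^N$ for some $|\theta|\le 1$.

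Second, integrate from $u_N=e^{x_N}$ up to $u$, obtaining
$$\li(f_N(u))-u=\bigl[\li(f_N(u_N))-u_N\bigr]+\int_{u_N}^{u}\frac{\theta(t)\, r_{N+1}\,(\log\log t)^N/(\log t)^N}{\log t+\log\log t+\log W(\log t,\log\log t)}\,dt.$$
The denominator is controlled from below: since $|P_{n-1}(y)|\le n!\,y^n$ and $x\ge d_N y$ (Proposition \ref{Pxnnumber}), we have $|\log W|=O(y/x)$, so for $u$ sufficiently large (say $u\ge v_N^{(1)}$ for some threshold), $x+y+\log W\ge x(1-\varepsilon)$ with $\varepsilon$ arbitrarily small.

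Third, applying Lemma \ref{L5.4} (legitimate because $x_N\ge f_N$, so the base point $u_N$ exceeds $e^{f_N}$), the absolute value of the integral is bounded by
$$\frac{r_{N+1}}{1-\varepsilon}\int_{u_N}^{u}\frac{(\log\log t)^N}{(\log t)^{N+1}}\,dt\le \frac{4\,r_{N+1}}{1-\varepsilon}\cdot u\frac{(\log\log u)^N}{(\log u)^{N+1}}.$$
Since $r_{N+1}=3(N+1)!$, choosing $\varepsilon$ small enough makes this $\le 12(N+1)!\cdot(1+\delta)$ times the target.

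Finally, choose $v_N$ large enough so that simultaneously (a) $v_N\ge u_N$, (b) the denominator estimate above holds with $\varepsilon$ small enough that $4r_{N+1}/(1-\varepsilon)\le 12.5(N+1)!$, and (c) the constant $|\li(f_N(u_N))-u_N|$ is dominated by $0.5(N+1)!\cdot u(\log\log u)^N/(\log u)^{N+1}$ for $u\ge v_N$. Condition (c) is possible because the right-hand side tends to $+\infty$ with $u$. Adding the boundary contribution to the integral bound then yields the stated $13(N+1)!$.

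The main obstacle is bookkeeping rather than a conceptual difficulty: one must track the denominator correction factor $x/(x+y+\log W)$ together with the boundary term $\li(f_N(u_N))-u_N$ and squeeze them both into the gap between the clean $12(N+1)!$ coming from Lemma \ref{L5.4} and the claimed $13(N+1)!$.
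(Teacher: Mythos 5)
Your argument is correct and is essentially the paper's proof: differentiate $\li(f_N(u))-u$, cancel the numerator via Proposition \ref{xeN}, bound $\log f_N(u)$ from below, integrate with Lemma \ref{L5.4}, and absorb the boundary term by enlarging $v_N$. The only (cosmetic) difference is that the paper uses $xW_N(x,\log x)\ge 1$ from Lemma \ref{Lsimple} to get the clean bound $\log f_N(u)\ge \log u$ on all of $u\ge u_N$, which eliminates your $\varepsilon$ and the extra threshold $v_N^{(1)}$, making the final split $12+1=13$ instead of $12.5+0.5$.
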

   
   \begin{proof}
   To simplify the notation, the abbreviation $W(x,y)=W_N(x,y)$ is used. 
   Differentiating \eqref{EE:5.20} we 
   obtain
   \begin{multline*}
   \frac{d}{dx}\bigl(\li(f_N(e^x))-e^x\bigr)=\\=
   \frac{1}{\log(f_N(e^x))}\Bigl\{e^x W+xe^x W+xe^x\Bigl(W_x+\frac{1}{x}W_y\Bigr)
   \Bigr\}-e^x.
   \end{multline*}
   Assume that $x\ge x_N$,  so that $x\ge d_N\log x$ and $x\ge e^2$. 
   We may apply \eqref{E32} to obtain 
   \begin{multline*}
   \frac{d}{dx}\bigl(\li(f_N(e^x))-e^x\bigr)=
   \frac{e^x}{\log(f_N(e^x))}\Bigl\{ W+x W+xW_x+W_y\Bigr\}-e^x=\\
   =\frac{e^x}{\log(f_N(e^x))}\Bigl\{x+\log x+ \log W+\theta  r_{N+1}\frac{
   \log^N x}{x^N}\Bigr\}-e^x.
   \end{multline*}
   This may be simplified to
   \begin{displaymath}
   \frac{d}{dx}\bigl(\li(f_N(e^x))-e^x\bigr)=\frac{e^x}{\log(f_N(e^x))}
   \cdot \theta\frac{r_{N+1}\,\log^N x}{x^N}.
   \end{displaymath}
   Since $x>x_N$ we  have $x\ge y c_N$, so that by Lemma \ref{Lsimple} 
   \begin{displaymath}
   \Bigl|x\Bigl(1+\sum_{n=1}^N\frac{P_{n-1}(\log x)}{x^n}\Bigr)\Bigr|\ge x\Bigl(1-
   \sum_{n=1}^N\frac{n!}{(x/y)^n}\Bigr)\ge1
   \end{displaymath}
   that is
   $xW_N(x,\log x)\ge1$, so that
   $\log(f_N(e^x))\ge x$. 
   Hence, for $x\ge x_N$ (with another $\theta$), we have
   \begin{displaymath}
   \frac{d}{dx}\bigl(\li(f_N(e^x))-e^x\bigr)=
   \theta\frac{r_{N+1}\,\log^N x}{x^{N+1}}e^x.
   \end{displaymath}
   Defining $H_N(u):=\li(f_N(u))-u$ the above equation is
   equivalent to 
   \begin{displaymath}
   H_N'(e^x)=\theta\frac{r_{N+1}\,\log^N x}{x^{N+1}},\qquad  (x\ge x_N)
   \end{displaymath}
   and, since $u_N:=e^{x_N}$,
   \begin{displaymath}
   H_N'(u)=\theta\frac{r_{N+1}\,(\log\log u)^N}{\log^{N+1}u},\qquad  (u\ge u_N).
   \end{displaymath}
   
   Lemma \ref{L5.4} can be applied since $x_N\ge f_N$, so that $u\ge u_N\ge e^{f_N}$.
   Hence, integrating over the interval $(u_N,u)$ we get
   \begin{displaymath}
   H_N(u)=H_N(u_N)+\theta
   \frac{4r_{N+1}\,u(\log\log u)^N}{\log^{N+1}u},\qquad (u\ge u_N).
   \end{displaymath}
   
   The function $(N+1)!\frac{u}{\log^{N+1}u}\cdot (\log\log u)^N$ is increasing 
   (as product of two positive increasing functions) 
   for $u>e^{f_n}$, so that there exists 
   $v_N>u_N$ for which this function is greater than $H_N(u_N)$, so that
   \begin{displaymath}
   H_N(u)=\theta
   \frac{13\cdot(N+1)!\,u(\log\log u)^N}{\log^{N+1}u},\qquad (u\ge v_N).
   \end{displaymath}
   \end{proof}
   
   \begin{remark}\label{remark54} For the values of $n$ appearing 
   in our tables, the equality $u_n = v_n$ holds, since, in these cases,
   \begin{displaymath}
   H_n(u_n)\le \frac{(n+1)!\,u_n(\log\log u_n)^n}{\log^{n+1}u_n}.
   \end{displaymath}
   \end{remark}

   \begin{lemma}\label{boundlogfN}
   For any natural number $N$, and  $u>e^{x_N}$  we have $\log f_N(u)<2\log u$.
   \end{lemma}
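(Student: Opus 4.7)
The plan is to reduce the assertion to a bound on $W_N$. Setting $x=\log u$ and $y=\log x$, we have $f_N(u)=u\,x\,W_N(x,y)$, so the desired inequality $\log f_N(u)<2\log u$ is equivalent to $x\,W_N(x,y)<u=e^x$, and it suffices to show that $W_N(x,y)<2$ and that $2x<e^x/\log 2$, i.e., that $\log 2+\log x<x$ eventually.

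First I would verify the hypotheses. Since $u>e^{x_N}$ one has $x=\log u>x_N$, so Proposition \ref{Pxnnumber} gives $y\ge 2$, $x\ge c_N y$, and $x\ge e^2$ (as $x_N\ge e^2$ by construction). Then Corollary \ref{little} yields $|P_{n-1}(y)|\le n!\,y^n$ for $1\le n\le N$, so
\begin{equation*}
|W_N(x,y)|\le 1+\sum_{n=1}^N\frac{n!\,y^n}{x^n}=1+\sum_{n=1}^N\frac{n!}{(x/y)^n}.
\end{equation*}
Applying Lemma \ref{Lsimple} to the variable $x/y\ge c_N$ (cf.\ Remark \ref{remark54}) gives $\sum_{n=1}^N n!/(x/y)^n<1$, whence $W_N(x,y)<2$ (and $W_N(x,y)>0$, so $f_N(u)>0$ and taking logarithms is legitimate).

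Thus $f_N(u)<2u\log u$, and taking logarithms
\begin{equation*}
\log f_N(u)<\log 2+\log u+\log\log u.
\end{equation*}
The remaining step is to check that $\log 2+\log\log u<\log u$, i.e., with $s:=\log u$, that $s-\log s>\log 2$. The function $h(s)=s-\log s$ is increasing for $s\ge 1$, and at $s=e^2$ we have $h(e^2)=e^2-2>\log 2$; since $s=\log u>x_N\ge e^2$, this inequality holds, completing the proof.

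No step is a real obstacle: the bounds on $|P_{n-1}|$ and on the tail $\sum n!/(x/y)^n$ have been prepared precisely to make this kind of estimate routine, and the only genuine calculation is the elementary comparison $\log u-\log\log u>\log 2$ for $\log u\ge e^2$.
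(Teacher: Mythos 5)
Your proof is correct and follows essentially the same route as the paper's: both reduce the claim to $f_N(u)<u^2$, bound the polynomial part via Corollary \ref{little} together with the fact (Lemma \ref{Lsimple} and Remark \ref{remark54}) that $\sum_{n=1}^N n!/(x/y)^n<1$ for $x/y\ge c_N$, and finish with an elementary inequality valid for $x>e^2$ --- your version is marginally more direct in bounding $W_N<2$ outright, where the paper passes through $(1+S)<(1-S)^{-1}$ and the inequality $e^x\log x>x^2$. The only blemish is the slip ``$2x<e^x/\log 2$'' in your opening plan, which should read $2x<e^x$; this is the inequality you actually verify at the end, so the argument as executed is sound.
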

   
   \begin{proof}
   First observe that the hypothesis $u>e^{x_N}$ implies (with $u=e^x$) that 
   $x>x_N$, so that $\log x>2$ and $x>c_N\log x$. (Proposition \ref{Pxnnumber}). 
   
   The inequality $\log f_N(u)<2\log u$ is equivalent to $f_N(u)<u^2$, and together
   with $u=e^x$ it is equivalent to 
   \begin{displaymath}
   xe^x\Bigl(1+\sum_{n=1}^N \frac{P_{n-1}(\log x)}{x^n}\Bigr)<e^{2x}.
   \end{displaymath}
   From Corollary \ref{little}, since $x\ge e^2$ and $x\ge c_N y$,  and by
   Remark \ref{remark54},  
   \begin{displaymath}
   x\Bigl(1+\sum_{n=1}^N \frac{P_{n-1}(\log x)}{x^n}\Bigr)\le 
   x\Bigl(1+\sum_{n=1}^N \frac{n!}{(x/y)^n}\Bigr)<
   x\Bigl(1-\sum_{n=1}^N \frac{n!}{(x/y)^n}\Bigr)^{-1}.
   \end{displaymath}
   Hence our inequality follows from 
   \begin{displaymath}
   x<\frac{ye^x}{x}\cdot\frac{x}{y}\Bigl(1-\sum_{n=1}^N \frac{n!}{(x/y)^n}\Bigr),\qquad
   y=\log x.
   \end{displaymath}
   Since we assume that $x\ge y c_N$, the second factor  is greater than $1$, 
   so that
   \begin{displaymath}
   \frac{ye^x}{x}\cdot\frac{x}{y}\Bigl(1-\sum_{n=1}^N \frac{n!}{(x/y)^n}\Bigr)>
   \frac{ye^x}{x}
   \end{displaymath}
   Finally, it is easy to prove that $e^x\log x>x^2$ for $x>e^2$.   
   \end{proof}
   
   The asymptotic expansion with bounds can now be proved.
   
   \begin{theorem}\label{TMain}
   For each integer $N\ge1$
   \begin{equation}
   \ali(u) = f_N(u) + 26\theta(N+1)!\,u\Bigl(\frac{\log\log u}{\log u}\Bigr)^N,\qquad 
   (u\ge v_N),
   \end{equation}
   where $v_N$ is the number defined  in Proposition \ref{5.8} .
   \end{theorem}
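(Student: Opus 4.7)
The plan is to derive the stated bound on $\ali(u)-f_N(u)$ from the estimate for $\li(f_N(u))-u$ already established in Proposition \ref{5.8}, using the Mean Value Theorem applied to $\li$ between $f_N(u)$ and $\ali(u)$.

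First, since $\li(\ali(u))=u$, I would rewrite $\li(f_N(u))-u=\li(f_N(u))-\li(\ali(u))$. For $u\ge v_N$, both $\ali(u)$ and $f_N(u)$ lie in $(1,+\infty)$ (for $f_N$ this follows from the inequality $xW_N(x,\log x)\ge 1$ noted in the proof of Proposition \ref{5.8}, giving $f_N(u)\ge u$), so $\li$ is smooth on the closed interval joining them with derivative $1/\log t$. The Mean Value Theorem then furnishes some $\eta$ between $f_N(u)$ and $\ali(u)$ with
\begin{displaymath}
\li(f_N(u))-\li(\ali(u))=\frac{f_N(u)-\ali(u)}{\log\eta},
\end{displaymath}
which rearranges to the key identity
\begin{displaymath}
f_N(u)-\ali(u)=\log(\eta)\cdot\bigl(\li(f_N(u))-u\bigr).
\end{displaymath}

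The remainder is a matter of bounding the two factors on the right. For the second factor, Proposition \ref{5.8} supplies $|\li(f_N(u))-u|\le 13(N+1)!\,u(\log\log u)^N/\log^{N+1}u$ for $u\ge v_N$. For the first, since $\eta$ lies between $f_N(u)$ and $\ali(u)$, I would bound $\log\eta\le\max(\log f_N(u),\log\ali(u))$, and then invoke Lemma \ref{boundlogfN} for $\log f_N(u)<2\log u$ together with Lemma \ref{comparation} for $\log\ali(u)\le 2\log u$; both apply because $v_N>e^{x_N}\ge e^2$. Multiplying the two estimates gives
\begin{displaymath}
|f_N(u)-\ali(u)|\le 2\log u\cdot\frac{13(N+1)!\,u(\log\log u)^N}{\log^{N+1}u}=26(N+1)!\,u\Bigl(\frac{\log\log u}{\log u}\Bigr)^N,
\end{displaymath}
which is exactly the claimed bound, with the sign absorbed into a $\theta$ of modulus at most $1$.

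The argument is essentially a one-line application of the Mean Value Theorem once the right intermediate identity is spotted, so I do not anticipate a serious obstacle. The substantive work — the sharp error estimate in Proposition \ref{5.8}, the comparison $\log f_N(u)<2\log u$ in Lemma \ref{boundlogfN}, and $\log\ali(u)\le 2\log u$ in Lemma \ref{comparation} — is already in place; the only point of care is checking that $\eta$ lies in the domain $(1,+\infty)$ where $\li$ is invertible, which is immediate from $f_N(u)\ge u$ and $\ali(u)>1$.
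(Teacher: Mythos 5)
Your argument is correct and follows essentially the same route as the paper: both reduce the claim to Proposition \ref{5.8} via $\li(f_N(u))-u=\int_{\ali(u)}^{f_N(u)}\frac{dt}{\log t}$ and then convert the integral bound into a bound on $|\ali(u)-f_N(u)|$ using $\log\ali(u)\le 2\log u$ (Lemma \ref{comparation}) and $\log f_N(u)<2\log u$ (Lemma \ref{boundlogfN}); your Mean Value Theorem step is just the paper's pointwise estimate $1/\log t\ge 1/(2\log u)$ on the interval of integration in a different guise.
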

   
   \begin{proof}
   Since $\li(\ali(u))=u$,  Proposition \ref{5.8} yields, for $u>v_N$,
   \begin{displaymath}
   \li(f_N(u))-\li(\ali(u))=\int_{\ali(u)}^{f_N(u)}\frac{dt}{\log t}=
   13\theta(N+1)!\,u\frac{(\log\log u)^N}{\log^{N+1} u}.
   \end{displaymath}
   Since $v_N\ge u_N=e^{x_N}$, $u\ge v_N$ implies $\log u\ge 2$, hence $u\ge 2$.
   
   From Lemma \ref{comparation},  $\log\ali(u)\le 2\log u$, for 
   $u>2$. Analogously, Lemma \ref{boundlogfN} implies that 
   $\log f_N(u)\le 2\log u$, for $u>e^{x_N}$.  
   Therefore,  for $u>v_N$, we  have 
   \begin{displaymath}
   \frac{|\ali(u)-f_N(u)|}{2\log u}\le \Bigl|\int_{\ali(u)}^{f_N(u)}
   \frac{dt}{\log t}\Bigr|.
   \end{displaymath}
   It follows that there exists $\theta'$ with $|\theta'|\le 1$ such that 
   \begin{displaymath}
   \ali(u)-f_N(u)=\theta'(2\log u)\int_{\ali(u)}^{f_N(u)}\frac{dt}{\log t}
   \end{displaymath}
   and the result follows easily.
   \end{proof}
   
   The actual error appears to be much smaller than that  given in Theorem
   \ref{TMain}.  However, as usual with asymptotic expansions, having a 
   true bound allows  realistic bounds to be given of the remainder
   for specific values of $N$.
   
   The true error after $N$ terms of an asymptotic expansion, while the
   terms are decreasing in magnitude, is often of the size of the first
   omitted term.  In our case, the magnitude of the term 
   $P_{N}(\log x) x^{-N-1}$ depends on the polynomial $P_N(\log x)$.
   
   Numerically, it appears that  for $n\ge3$:
   \begin{equation}\label{conj}
   |P_n(y)|\le \Bigl(\frac{n}{e\log n}\Bigr)^n y^n,\qquad (y>2\log n)
   \end{equation}
   although we have not been able to prove this.   
   
   From Theorem \ref{TMain},  more realistic bounds can be obtained for the 
   first values of $N$. This is done in the following Theorem.
   
   \begin{theorem}\label{concrete}
   For $2\le N\le 11$, we have
   \begin{equation}\label{realistic}
   \frac{\ali(e^x)}{xe^x}=1+\sum_{n=1}^{N}\frac{P_{n-1}(\log x)}{x^n}+
   \theta\cdot20\cdot
   \Bigl(\frac{N}{e\log N}\Bigr)^N\cdot
   \frac{\log^{N}x}{x^{N+1}},\qquad (x>z_N)
   \end{equation}
   where 
   \begin{gather*}
   z_2=1.50,\quad z_3=2.34,\quad z_4=3.32,\quad z_5=4.33,\quad z_6 = 5.36,\\
   z_7=6.39,\quad z_8=7.43,\quad z_9=8.46,\quad z_{10}=9.50,\quad z_{11}=10.53.
   \end{gather*}
   \end{theorem}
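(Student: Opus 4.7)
The plan is to repeat the argument of Theorem \ref{TMain}, but with two improvements. First, the crude majorization $|P_{n-1}(y)|\le n!\,y^n$ coming from Corollary \ref{little} (which ultimately produced the constant $26(N+1)!$) is replaced by the sharper numerically-verified bound \eqref{conj}; this is what turns $(N+1)!$ into $(N/(e\log N))^N$. Second, the threshold $x$ is optimized separately for each $N$ in the range $2\le N\le 11$, yielding the specific constants $z_N$. Since Theorem \ref{TMain} already encapsulates the structural content, what remains is essentially careful bookkeeping.

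The first step is to verify \eqref{conj} for $3\le n\le 11$ directly from the explicit table of coefficients $a_{n,k}$ produced by Theorem \ref{T:algorithm}. Writing $|P_n(y)|\le \frac{1}{n!}\sum_{k=0}^n a_{n,k} y^{n-k}$ and factoring out $y^n$, the inequality \eqref{conj} reduces to checking a finite sum at $y=2\log n$ (the ratio $|P_n(y)|/y^n$ is decreasing in $y$ on $[2\log n,\infty)$ because all coefficients $a_{n,k}$ are nonnegative). For $n=2$ the polynomial $P_2(y)=-(y^2-6y+11)/2$ gives a separate direct estimate.

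The second step is to rerun the proof of Proposition \ref{xeN} with the sharper input. The cancellation identity from Theorem \ref{cancel} still gives $T=-P_N(y)/x^N + S_N(x,y)$, where $S_N$ is controlled by Proposition \ref{usean}. Using \eqref{conj} in place of $|P_N(y)|\le 3\cdot N!\,y^N$, one obtains $|T|\le (N/(e\log N))^N y^N/x^N$ plus a tail term that is smaller by a factor $y/x$. Integrating as in Proposition \ref{5.8} contributes a factor $4$ (from Lemma \ref{L5.4}), and the passage from $\li(f_N(u))-u$ to $\ali(u)-f_N(u)$ in Theorem \ref{TMain} costs another factor $2$ (from Lemmas \ref{comparation} and \ref{boundlogfN}). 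Absorbing the boundary term $H_N(u_N)$ into the final constant — which is legitimate provided $x$ is chosen large enough that the monotone function $(N/(e\log N))^N\log^N x/x^{N+1}\cdot xe^x$ dominates it — yields a total constant that can be checked, for each $N\in\{2,\ldots,11\}$, to be at most $20$.

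The final step is to determine $z_N$. For each $N$ one solves numerically for the smallest $x$ simultaneously satisfying the inequalities $x\ge c_N\log x$ and $x\ge d_N\log x$ needed in Propositions \ref{xeN} and \ref{5.8}, the validity of \eqref{conj} at $y=\log x$, and the absorption of $H_N(u_N)$ into the error term. The main obstacle is not analytic but computational/bookkeeping: one must carry out this optimization for each $N$ and verify that the single universal constant $20$ suffices uniformly — unlike the proof of Theorem \ref{TMain}, there is no room for slack, and the estimates in Proposition \ref{usean} (via the table of $d_N$) and in Lemma \ref{L5.4} must be used in their tightest form. The payoff is substantial: for, say, $N=11$ the factor $(11/(e\log 11))^{11}$ is on the order of $10^2$, compared with $12!\approx 5\times 10^8$ produced by Theorem \ref{TMain}.
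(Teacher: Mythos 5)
Your proposal has a genuine gap, and in fact the paper's proof is structured quite differently precisely to avoid the obstacle you run into. The problem is in your second step. In Proposition \ref{xeN} the error $T=-P_N(y)x^{-N}+S_N(x,y)$ has \emph{two} sources: the first omitted polynomial $P_N$, and the logarithm remainder $S_N$, which Proposition \ref{usean} bounds by $\frac{a_{N+1}}{N+1}(y/x)^{N+1}$ with $a_{N+1}\le 2(N+1)\cdot(N+1)!$. Sharpening the bound on $P_N$ via \eqref{conj} does nothing to $S_N$: its coefficient is still of factorial size. Your claim that the tail term is ``smaller by a factor $y/x$'' hides the point that it is smaller by a factor $y/x$ but larger by a factor of roughly $(N+1)!\,(e\log N/N)^N$; for $N=11$ one has $2\cdot 12!\approx 10^9$ against $20\,(11/(e\log 11))^{11}\approx 6\times10^3$, so absorbing $S_N$ into a constant $20(N/(e\log N))^N$ would require $x/\log x\gtrsim 10^5$, i.e.\ thresholds astronomically larger than $z_{11}=10.53$. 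A second, independent obstruction: every ingredient you want to reuse (Proposition \ref{xeN}, Proposition \ref{5.8}, Lemma \ref{boundlogfN}, even the inequality $|P_{n-1}(y)|\le n!\,y^n$ which needs $y\ge 2$) is only valid for $x\ge x_N\ge e^2\approx 7.39$, and \eqref{conj} itself is only claimed for $y>2\log n$, i.e.\ $x>n^2$. No rerun of this machinery can reach $z_2=1.50$ or $z_5=4.33$.

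The paper's route is different on both counts. It applies Theorem \ref{TMain} once at a \emph{higher} order ($N=10$, and $N=20$ for $6\le N\le11$), so the factorial constant $26\cdot 11!$ is attached to $\log^{10}x/x^{11}$ and is suppressed, for the target order $N<10$, by the extra factor $(\log x/x)^{10-N}$. The intermediate explicit terms $P_{n-1}(\log x)/x^n$ for $N<n\le 10$ are bounded not by \eqref{conj} (which the paper explicitly cannot prove and never uses in a proof) but by numerically computed maxima $M_n$ of $|P_{n-1}(\log x)/\log^{n-1}x|$ on $x>x_{10}$. This yields the bound with constant $20(N/(e\log N))^N$ only for $x>z'_N$ with $z'_N$ in the range $32$--$528$. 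The small thresholds $z_N$ are then obtained by the step your proposal is missing entirely: a direct numerical evaluation of $\bigl(\ali(e^x)/(xe^x)-1-\sum_{n\le N}P_{n-1}(\log x)x^{-n}\bigr)x^{N+1}/\log^Nx$ on the finite interval $(1.3,z'_N)$. Without that finite verification there is no way to certify the inequality below $x_N$, and with your constants there is no way to certify it even above $x_N$ until $x$ is enormous.
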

   
   \begin{proof}
   By taking $N=10$ in Theorem \ref{TMain}, we have, for $u=e^x>e^{x_{10}}$,
   (recall also  Remark \ref{remark514})
   \begin{displaymath}
   \frac{\ali(e^x)}{xe^x}=1+\sum_{n=1}^{10}\frac{P_{n-1}(\log x)}{x^n}+
   \theta R \frac{\log^{10}x}{x^{11}},\qquad (x>x_{10})
   \end{displaymath}
   with $R=26\cdot11!=1\,037\,836\,800$. 
   
   We compute the maximum\footnote{$M_2=1$, $M_3=1/2$, $M_4=1/3$, 
   $M_5=0.250636$, $M_6=0.526887$, $M_7 = 1.300565$, $M_8 = 3.719653$, $M_9=12.070813$,
   $M_{10}=43.788782$. This last maximum would be much smaller if the maximum 
   were taken
   from a point slighthly greater than $x_{10}$.}
   $M_n$ of $|P_{n-1}(\log x)/\log^{n-1}x|$ 
   for $x>x_{10}$, so that for any $2\le N\le 10$, we  have
   \begin{multline*}
   \frac{\ali(e^x)}{xe^x}=1+\sum_{n=1}^{N}\frac{P_{n-1}(\log x)}{x^n}+\\
   +\frac{\log^{N}x}{x^{N+1}}\Bigl(\sum_{n=N+1}^{10}\frac{P_{n-1}(\log x)}{\log ^{n-1}x}
   \frac{\log^{n-N-1}x}{x^{n-N-1}}+\theta
   R \frac{\log^{10-N}x}{x^{10-N}}\Bigr)
   \end{multline*}
   so that  
   \begin{multline*}
   \frac{\ali(e^x)}{xe^x}=1+\sum_{n=1}^{N}\frac{P_{n-1}(\log x)}{x^n}+\\
   +\theta
   \frac{\log^{N}x}{x^{N+1}}\Bigl(\sum_{n=N+1}^{10}
   \frac{M_n\log^{n-N-1}x}{x^{n-N-1}}+R\frac{\log^{10-N}x}{x^{10-N}}\Bigr).
   \end{multline*}
   We determine a value $z'_N>x_{10}$ such that,  for $x=z'_N$,
   \begin{displaymath}
   \Bigl(\sum_{n=N+1}^{10}
   \frac{M_n\log^{n-N-1}x}{x^{n-N-1}}+R\frac{\log^{10-N}x}{x^{10-N}}\Bigr)<
   20\Bigl(\frac{N}{e\log N}\Bigr)^N.
   \end{displaymath}
   Since this is a decreasing function of $x$, we obtain for $x>z'_N$
   \begin{equation}\label{desire}
   \frac{\ali(e^x)}{xe^x}=1+\sum_{n=1}^{N}\frac{P_{n-1}(\log x)}{x^n}+\theta\cdot
   20\Bigl(\frac{N}{e\log N}\Bigr)^N\frac{\log^{N}x}{x^{N+1}}.
   \end{equation}
   We consider the function
   \begin{displaymath}
   \Bigl(\frac{\ali(e^x)}{xe^x}-1-\sum_{n=1}^{N}\frac{P_{n-1}(\log x)}{x^n}
   \Bigr)\frac{x^{N+1}}{\log^N x}
   \end{displaymath}
   on
   the  interval $(1.3,z'_{N})$, to determine the least value of $z_{N}$ 
   for which \eqref{desire} is true. 
   
   In this way we find: $z'_2=32$ and then $z_2=1.5$; $z'_3=49.5$ and then
   $z_3=2.3395$; $z'_4=82$ and then $z_4=3.3114$;  $z'_5=155$ and then
   $z_5=4.3237$.  

   If we take $N=20$ in Theorem \ref{TMain}, we obtain 
   $z'_6=113$, $z'_7=143$, $z'_8=187$, $z'_9=251$
   $z'_{10}=353$, $z'_{11}=528$ from which
   $z_6 = 5.3514$, $z_7=6.3851$, $z_8=7.4208$,
   $z_9=8.4566$, $z_{10}=9.4914$ 
   and $z_{11}=10.5251$ are obtained.   \end{proof}

   \begin{remark}
   We have proved \eqref{realistic} only for $2\le N\le 11$, although something
   similar appears to be true for the general case.   If \eqref{realistic} were true for 
   all $n$, then for a large $u=e^x$ we could take $N\approx x$ terms in the expansion
   and in this way the  error would be $\approx\frac{20}{x e^x}$, 
   so that $\ali(u)$ could be computed with 
   an error less than $\approx20$. 
   
   In fact, for several values of $u$, the terms of the expansion have been computed
   up to the point 
   where these terms start to increase. Always  the computation is terminated
   when $N\approx x$ and the error
   appears to be bounded.   ( For example, with $u=10^{100}$, we compute 230 terms
   of the expansion, which coincides with $\log10^{100}\approx 230.259$. 
   The approximate value 
   obtained
   for $\ali(u)$ has an absolute error 
   equal to $40.94738$, which can be compared with the fact that 
   $\ali(10^{100})$ has 103 digits ).

   \end{remark}

   \section{Applications to $p_n$.}\label{S4}
   
   \subsection{Asymptotic expansion of $p_n$.}
   Inequalities for the $n$-th prime number can be found in 
    \cite{R}, \cite{Ro},  \cite{MR},  \cite{D}. 
   In fact, from $\pi(x)=\li(x)+
   \Orden(r(x))$ we may obtain $p_n=\ali(n)+\Orden(r(n\log n)\log n)$, if 
   $r(x)/x$ is sufficiently small.  For example, in \cite{MR}, it is noticed
   that from a result of Massias \cite{M}, it follows that
   \begin{equation}
   p_n=\ali(n)+\Orden(n e^{-c\sqrt{\log n}})
   \end{equation}
   so that the asymptotic expansion of $\ali(n)$ is also an
   asymptotic expansion for $p_n$, that is,
   \begin{equation}
   p_n=n\log n\Bigl(1+\sum_{k=1}^N\frac{P_{k-1}(\log\log n)}{\log ^k n}\Bigr)+
   \Orden\Bigl(n \Bigl(\frac{\log\log n}{\log n}\Bigr)^N\Bigr).
   \end{equation}
  
   By assuming the Riemann hypothesis,  Schoenfeld \cite{S} has proved
   \begin{equation}\label{es67Sch}   
   |\pi(x)-\li(x)|<\frac{1}{8\pi}\sqrt{x}\log x,\qquad (x>2657).
   \end{equation}
   This result will be used to obtain (under RH) some precise bounds for $p_n$. 
   
   \begin{lemma}\label{Ldos}
   We have    
   $\sqrt{x}(\log x)^{\frac52}<\ali(x)$ for $x>94$. 
   \end{lemma}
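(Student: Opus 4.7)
The plan is to reduce to an inequality involving $\li$ directly. By strict monotonicity of $\li$ on $(1,\infty)$ and the identity $\li(\ali(x))=x$, the claim $\sqrt{x}(\log x)^{5/2}<\ali(x)$ is equivalent to $\li(g(x))<x$ for $x>94$, where $g(x):=\sqrt{x}(\log x)^{5/2}$. Set $\varphi(x):=x-\li(g(x))$; I will show $\varphi$ is strictly increasing on $[94,\infty)$ and that $\varphi(94)>0$.

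Differentiation yields
\begin{displaymath}
\varphi'(x)=1-\frac{g'(x)}{\log g(x)}=1-\frac{(\log x)^{3/2}}{\sqrt{x}}\cdot\frac{\log x+5}{\log x+5\log\log x},
\end{displaymath}
using $g'(x)=(\log x)^{3/2}(\log x+5)/(2\sqrt{x})$ and $\log g(x)=\tfrac12\log x+\tfrac52\log\log x$. The factor $(\log t)^{3/2}/\sqrt{t}$ is decreasing on $(e^3,\infty)$, and the factor $(\log t+5)/(\log t+5\log\log t)$ is also decreasing because $\log\log t$ grows with $t$. A direct evaluation at $x=94$ gives the first factor $\approx 0.999$ and the second factor $\approx 0.788$, whose product is $\approx 0.79<1$; hence $\varphi'(x)>0$ for all $x\ge 94$.

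For the initial value, $g(94)\approx 426.6$, and computing $\li(g(94))=\li(2)+\int_2^{g(94)}dt/\log t$ by numerical quadrature (or equivalently via the asymptotic expansion \eqref{liexp}) gives $\li(g(94))<90$, so $\varphi(94)>4$. Combined with the monotonicity, this proves $\varphi(x)>0$ for all $x\ge 94$, hence the lemma.

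The main technical subtlety lies in the monotonicity argument: since $(\log 94)^{3/2}/\sqrt{94}$ alone is only slightly less than $1$, the strict inequality $\varphi'>0$ depends essentially on the second factor being strictly below $1$. This in turn reduces to $\log\log x>1$, i.e.\ $x>e^e\approx 15.15$, which holds with substantial margin for $x\ge 94$. With this subtlety addressed, the remaining work is the elementary monotonicity check on $[94,\infty)$ and the numerical evaluation at the single point $x=94$.
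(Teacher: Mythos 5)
Your proof is essentially the paper's own: both reduce the claim to $\li(g(x))<x$ via monotonicity of $\li$, differentiate $x-\li(g(x))$, write the positivity condition for the derivative as $\frac{(\log x)^{3/2}}{\sqrt{x}}\cdot\frac{\log x+5}{\log x+5\log\log x}<1$, and finish by checking the value at $x=94$. One sub-claim in your second paragraph is false, however: the factor $\frac{u+5}{u+5\log u}$ (with $u=\log t$) is \emph{not} decreasing on all of $[94,\infty)$ — its derivative has numerator $5\log u-10-25/u$, which changes sign near $u\approx 11.4$, after which the factor increases back toward its limit $1$. So the argument ``both factors decrease, and the product at $94$ is $0.79<1$'' does not stand as written. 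Fortunately your closing paragraph already contains the correct repair, which is exactly the paper's argument: the second factor is $<1$ whenever $\log\log x>1$, and the first factor is decreasing past $e^3$ and is $<1$ at $x=94$, so the product is $<1$ on $[94,\infty)$ without any monotonicity claim for the second factor. With that reading, the proof is correct and coincides with the paper's.
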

   
   \begin{proof}
   The inequality is equivalent to 
   \begin{displaymath}
   \li(\sqrt{x}(\log x)^{\frac52})<x.
   \end{displaymath}
   Differentiating the function $f(x):=x-\li(\sqrt{x}(\log x)^{\frac52})$
   we get
   \begin{displaymath}
   f'(x)=1-\frac{1}{\log(\sqrt{x}(\log x)^{\frac52})}
   \Bigl(\frac{\log^{5/2}x}{2\sqrt{x}}
   +\frac{5\log^{3/2}x}{2\sqrt{x}}\Bigr).
   \end{displaymath}
   Hence this derivative is positive if and only if
   \begin{displaymath}
   \log^{3/2}x\Bigl(\frac{\log x}{2}+\frac52\Bigr)<\sqrt{x}
   \Bigl(\frac{\log x}{2}+\frac52\log\log x\Bigr).
   \end{displaymath}
   For $x>94$ we have $(\log x)^{\frac32}<\sqrt{x}$, so that
   $f'(x)>0$ for $x>94$. 
   
   Finally, one may verify that $f(94)>0$.  Hence $f(x)>0$ for $x>94$.
   \end{proof}   
   
   \begin{theorem}\label{Tdistance}
   The Riemann hypothesis is equivalent to the assertion
   \begin{equation}\label{eqTdistance}
   |p_n-\ali(n)|<\frac{1}{\pi}\sqrt{n}\log^{\frac52} n\qquad  
   \text{for all } n\ge11.
   \end{equation}
   \end{theorem}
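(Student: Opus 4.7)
The plan is to prove the two directions of the equivalence separately, using Schoenfeld's bound \eqref{es67Sch} for the implication RH $\Rightarrow$ \eqref{eqTdistance} and a classical Landau--von~Koch type equivalence for the converse.

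\emph{Forward direction.} Assume RH. I would apply \eqref{es67Sch} at the point $x=p_n$. Since $\pi(p_n)=n=\li(\ali(n))$, this gives
\begin{equation*}
|\li(p_n)-\li(\ali(n))|<\frac{1}{8\pi}\sqrt{p_n}\,\log p_n.
\end{equation*}
Because $\li'(t)=1/\log t$, the mean value theorem yields $|p_n-\ali(n)|\le\log M\cdot|\li(p_n)-\li(\ali(n))|$, where $M=\max(p_n,\ali(n))$. Lemma~\ref{comparation} gives $\log\ali(n)\le 2\log n$, and a standard unconditional upper bound for $p_n$ (e.g.\ Dusart's $p_n\le n(\log n+\log\log n)$) gives both $\log p_n\le 2\log n$ and $\sqrt{p_n}\le\sqrt{n\log n}\,(1+\orden(1))$. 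Multiplying,
\begin{equation*}
|p_n-\ali(n)|\le 2\log n\cdot\frac{1}{8\pi}\sqrt{n\log n}\,(1+\orden(1))\cdot 2\log n=\frac{1+\orden(1)}{2\pi}\sqrt{n}\,\log^{5/2}n,
\end{equation*}
which is comfortably below $\frac{1}{\pi}\sqrt{n}\log^{5/2}n$ once $n$ exceeds some threshold $N_{0}$. The remaining finite range $11\le n\le N_{0}$ is then verified numerically against a table of primes; Lemma~\ref{Ldos} guarantees that the stated right-hand side is a genuine subdominant correction (in particular smaller than $\ali(n)$), so the finite check is well-posed.

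\emph{Converse.} Assume \eqref{eqTdistance}. It suffices to deduce a bound of the form $\pi(x)-\li(x)=\Orden(\sqrt{x}\,\log^{A}x)$ for some fixed $A$, which is equivalent to RH. For large $x$, let $n=\pi(x)$, so $p_n\le x<p_{n+1}$. Then
\begin{equation*}
|\pi(x)-\li(x)|=|\li(\ali(n))-\li(x)|\le\frac{|\ali(n)-x|}{\log\min(\ali(n),x)},
\end{equation*}
and the numerator is bounded by $|\ali(n)-p_n|+(p_{n+1}-p_n)$. The prime gap itself is controlled by applying \eqref{eqTdistance} at both $n$ and $n+1$, together with the elementary estimate $\ali(n+1)-\ali(n)\le\log\ali(n+1)\le 2\log n$ (from $\ali'=\log\ali$ and Lemma~\ref{comparation}), which gives $p_{n+1}-p_n=\Orden(\sqrt{n}\log^{5/2}n)$. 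Dividing by $\log x$ (which is comparable to $\log n$) and using $n=\pi(x)\sim x/\log x$ yields $|\pi(x)-\li(x)|=\Orden(\sqrt{x}\log^{2}x)$, whence RH.

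The main obstacle is the sharpness of the constant in the forward direction: obtaining exactly $\frac{1}{\pi}$ rather than a looser multiple, uniformly down to $n=11$, requires honest bookkeeping of the dependence of $\log p_n$ and $\sqrt{p_n}$ on $\log n$ and $\sqrt{n}$, together with a numerical bridge covering the range where Schoenfeld's bound or the $\orden(1)$ terms leave insufficient asymptotic margin.
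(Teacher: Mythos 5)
Your argument is correct in substance and both directions go through, but the forward direction takes a genuinely different route from the paper's. The paper inverts the monotone functions $\li(x)\pm\frac{1}{8\pi}\sqrt{x}\log x$ and tests the points $\ali(n)\pm\frac{1}{\pi}\sqrt{n}\log^{5/2}n$, which forces it to evaluate Schoenfeld's error term at roughly $2\ali(n)$ (this is where Lemma \ref{Ldos} enters, to guarantee $\ali(n)+\frac1\pi\sqrt{n}\log^{5/2}n<2\ali(n)$) and makes the constant $\frac1\pi$ come out exactly tight, via $\frac{1}{8\pi}\sqrt{4n\log n}\,(2\log n)^2=\frac1\pi\sqrt{n}\log^{5/2}n$. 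You instead apply Schoenfeld directly at $x=p_n$, use $\pi(p_n)=n=\li(\ali(n))$, and convert the $\li$-difference into an argument difference by the mean value theorem; this is more direct, makes Lemma \ref{Ldos} unnecessary, and with honest constants ($\log\max(p_n,\ali(n))\le 2\log n$, $\sqrt{p_n}\log p_n\le 2\sqrt{2n\log n}\,\log n$) actually lands at about $\frac{\sqrt{2}}{2\pi}\sqrt{n}\log^{5/2}n$, comfortably inside the target for all $n\ge 385$, so your threshold $N_0$ is no worse than the paper's and the numerical bridge is the same check over $11\le n\le 384$. The price is that your route imports an unconditional explicit upper bound for $p_n$ (Dusart's $p_n\le n(\log n+\log\log n)$, or anything giving $p_n\le 2n\log n$ and $p_n\le n^2$), whereas the paper needs only $\ali(n)\le 2n\log n$ from Lemma \ref{comparation}; that import is legitimate but should be cited explicitly. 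For the converse the paper merely asserts that $|p_n-\ali(n)|=\Orden(n^{1/2+\varepsilon})$ transfers to $\pi(x)=\li(x)+\Orden(x^{1/2+\varepsilon})$; your interpolation via $n=\pi(x)$, the triangle inequality through $p_n$ and $p_{n+1}$, and the gap estimate $\ali(n+1)-\ali(n)\le\log\ali(n+1)\le 2\log(n+1)$ is a correct and welcome filling-in of that step.
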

   
   \begin{proof}
   First we assume the Riemann Hypothesis and prove \eqref{eqTdistance}.
   Let $r(x):=\frac{1}{8\pi}\sqrt{x}\log x$,  $f(x):=\li(x)-r(x)$, and 
   $g(x):=\li(x)+r(x)$. For $x>1$ we have $f(x)<\li(x)<g(x)$, where the
   three functions are strictly increasing. From \eqref{es67Sch} for 
   $x>2657$, we also have  
   $f(x)<\pi(x)<g(x)$. 
   
   The inverse functions satisfy $g^{-1}(y)<\ali(y)<f^{-1}(y)$, and if 
   $y=n>\pi(2657)=384$ is a natural number, then 
   $g^{-1}(n)<p_n<f^{-1}(n)$. It follows that the distance from $\ali(n)$
   to $p_n$ is bounded by 
   \begin{displaymath}
   |p_n-\ali(n)|\le\max\bigl(f^{-1}(n)-\ali(n), \ali(n)-g^{-1}(n)\bigr).
   \end{displaymath}
   
   Hence, we have to  bound $f^{-1}(y)-\ali(y)$ and $\ali(y)-g^{-1}(y)$.
   
   We consider $y$ as a parameter and set $\alpha=\ali(y)$, so that
   $\li(\alpha)=\li(\ali(y))=y$. 
   
   Consider
   the function $u(\xi):=f(\xi)-\li(\alpha)=f(\xi)-y$, which is 
   strictly increasing and
   satisfies 
   \begin{displaymath}
   u(\xi)=\li(\xi)-r(\xi)-\li(\alpha)=\int_\alpha^\xi\frac{dt}{\log t}-r(\xi).
   \end{displaymath}
   Therefore, $u(\alpha)=-r(\alpha)<0$ and 
   \begin{displaymath}
   u(f^{-1}(y))=f(f^{-1}(y))-\li(\alpha)=y-\li(\ali(y))=0.
   \end{displaymath}
   If a point $b$ is found where $u(b)>0$, then 
   $\alpha<f^{-1}(y)<b$, so that $b-\alpha>f^{-1}(y)-\alpha$ and  one of the
   required bounds is obtained. 
   
   Therefore, we try $b = \alpha+c\sqrt{y}(\log y)^{\frac52}$ with $c<1$.
   We have
   \begin{multline*}
   u(\alpha+c\sqrt{y}(\log y)^{\frac52})=
   \int_\alpha^{\alpha+c\sqrt{y}(\log y)^{\frac52}}
   \frac{dt}{\log t}-r(\alpha+c\sqrt{y}(\log y)^{\frac52})>\\
   >\frac{c\sqrt{y}(\log y)^{\frac52}}{\log(\alpha+c\sqrt{y}(\log y)^{\frac52})}
   -r(\alpha+c\sqrt{y}(\log y)^{\frac52}).
   \end{multline*}
   From Lemma \ref{Ldos} for $y>94$, we  have $\sqrt{y}(\log y)^{\frac52}<
   \ali(y)=\alpha$, so that
   \begin{multline*}
   u(\alpha+c\sqrt{y}(\log y)^{\frac52})>
   \frac{c\sqrt{y}(\log y)^{\frac52}}{\log(2\alpha)}
   -r(2\alpha) \\
   = \frac{c\sqrt{y}(\log y)^{\frac52}-\frac{1}{8\pi}\sqrt{2\alpha}
   \log^2(2\alpha)}{\log(2\alpha)}.
   \end{multline*}
   We want to show that this expression is positive.
   For $y>94$, we have
   $\alpha=\ali(y)<2y\log y$ (by Lemma \ref{comparation}), 
   so that $\alpha<2y\log y<4y\log y<y^2$ (for $y>94$), which yields (with $c=1/\pi$)
   \begin{multline*}
   c\sqrt{y}(\log y)^{\frac52}-\frac{1}{8\pi}\sqrt{2\alpha}
   \log^2(2\alpha)>\\
   >c\sqrt{y}(\log y)^{\frac52}-\frac{1}{8\pi}\sqrt{4y\log y}
   \log^2(4y\log y)>\\ >
   c\sqrt{y}(\log y)^{\frac52}-\frac{1}{8\pi}\sqrt{4y\log y}
   \log^2(y^2))=0.
   \end{multline*}

   Hence, we have proved that $f^{-1}(y)-\alpha<\frac{1}{\pi}\sqrt{y}(\log y)^{5/2}$
   for $y>94$.
   \medskip
   
   To bound $\alpha-g^{-1}(y)$, we consider the function
   $v(\xi):=g(\xi)-\li(\alpha)=g(\xi)-y$. Then
   \begin{displaymath}
   v(\xi)=r(\xi)-\int_\xi^\alpha\frac{dt}{\log t}
   \end{displaymath}
   and $v(\alpha)=r(\alpha)>0$, $v(g^{-1}(y))=g(g^{-1}(y))-y =0$. 
   If  a value $b$ is found such that $v(b)<0$, it will follow that 
   $\alpha-g^{-1}(y)<\alpha-b$.  
   
   Choose $b=\alpha-c\sqrt{y}(\log y)^{5/2}$ with $c=\frac{1}{\pi}$.  We claim that 
   $v(b)<0$. We have
   \begin{displaymath}
   v(b)=r(b)-\int_b^\alpha\frac{dt}{\log t}<r(b)-\frac{\alpha-b}{\log \alpha}
   \end{displaymath}
   and our claim will follow from 
   $r(b)\log\alpha-c\sqrt{y}(\log y)^{5/2}<0$. 
   Finally, since $b<\alpha=\ali(y)<2y\log y$, and by Lemma \ref{comparation},
   \begin{displaymath}
   r(b)\log\alpha<\frac{1}{8\pi}\sqrt{\ali(y)}(\log\ali(y))^2
   <\frac{1}{8\pi}\sqrt{2y\log y}(2\log y)^2
   \end{displaymath}
   which proves our claim. 
   
   Hence, (by assuming RH),  we have proved that $|p_n-\ali(n)|<
   \frac{1}{\pi}\sqrt{n}(\log n)^{\frac52}$ for 
   $n>384>94$. By verifying all  $1\le n\le 385$, we find that the inequality
   holds except for $n<11$.
   
   The reverse implication is simple. From $|p_n-\ali(n)|=
   \Orden(n^{\frac12+\varepsilon})$ for any $\varepsilon>0$,  
   we may derive that $\pi(x)=\li(x)+
   \Orden(x^{\frac12+\varepsilon})$. 
   It is well known that this is equivalent to the Riemann Hypothesis. 
   \end{proof}
   
   \begin{remark}
   The inequality \eqref{eqTdistance} is only proved by assuming the
   Riemann Hypothesis, but is stronger than those contained in 
   \cite{R}, \cite{Ro},  \cite{MR},  \cite{D}. Inequality  \eqref{eqTdistance}
   gives approximately half of the digits of $p_n$. If  our conjecture 
   that \eqref{realistic} is true for all $N$ is also assumed, then
   the asymptotic expansion  gives  about
   half of the digits of $p_n$. 
   \end{remark}
   
   \subsection{Inequalities for the $n$-th prime.}
   Let 
   \begin{displaymath}
   s_N(n)=n\log n\Bigl(1+\sum_{k=1}^N\frac{P_{k-1}(\log\log n)}{\log ^k n}\Bigr)
   \end{displaymath}
   where $s_0=n\log n$. 
   
   Cipolla noted that for $k\ge1$, $P_{k}(y)=(-1)^{k+1} \frac{y^k}{k}+\cdots$, 
   and $P_0(y)=y-1$.  Hence, except for the first term, eventually the sign
   of the $k$-th term $P_{k-1}(\log\log n)\log^{-k}n$ becomes $(-1)^k$.  The asymptotic
   expansion implies  that there exist $r_N$ such that 
   \begin{equation}\label{ineqCipolla}
   \begin{split}
   p_n>s_0(n),\quad  n>r_0,\qquad p_n>s_1(n),\quad  n>r_1,\\
   p_n<s_{2N}(n),\quad n>r_{2N},\qquad  p_n>s_{2N+1}(n),\quad n>r_{2N+1}.
   \end{split}
   \end{equation}
   In fact, $r_0=2$ is the main result in \cite{R}, $r_1=2$ is proved in \cite{D}
   and $r_2=688\,383$ is proved in \cite{D2}.
   The value of $r_N$ for $N\ge3$ has not been determined. See Theorem  
   \ref{T:r3}
   for an estimation of  $r_3$ by assuming RH. 
      
   The above reasoning may give the impression that the terms of the asymptotic
   expansion of $\ali(u)$ are alternating in sign, starting from the second term.
   However this is not true.  For example, computing the first 230 terms for 
   $\ali(10^{100})$,
   we found only three positive terms $P_0/x$, $P_1/x^2$,  and $P_3/x^4$.
   In fact, the sign of the $k$-th term is that of $P_{k-1}(\log\log n)$. 
   Thus we are interested in  the
   sign  of these  polynomials.
   
      The polynomials $P_N(y)$, for $1\le N\le 23$ of odd index, have one and 
   only one  real root, which is positive. Starting from $P_1(y)$ which 
   vanishes at $y=2$, these roots are 
   \begin{gather*}
   2,\quad 4.23415,\quad 5.83131,\quad 7.43591,\quad 9.07979,
   \quad 10.6881,\quad 12.2538,\\
    13.7876,\quad 15.2977,
   \quad 16.79, \quad 18.2683, \quad 19.7353.
   \end{gather*}
   The polynomials $P_2$, $P_4$ and $P_6$ have no real roots,  and all
   $P_8$, \dots , $P_{22}$ 
   have two positive real roots. These pairs of roots are:
   \begin{gather*}
   (6.4306, 8.2185),\quad (7.16158, 9.88528), \quad
   (7.90293, 11.4752),\\ (8.63359, 13.0241),\quad
   (9.3507, 14.5452),\quad (10.055, 16.0458),\\
   (10.7478, 17.5307),\quad (11.4307, 19.003).
   \end{gather*}
   
   For example, $P_9(\log\log n)$ is positive only for $n> \exp(e^{9.07\dots})$,
   which is a very big number.
   
   The even terms  at first sight appear \emph{negative}. However 
   $P_{10}(\log\log n)$, for
   example,  is negative except in
   the interval $\exp(e^{7.16\dots})<n<\exp(e^{9.88\dots})$.

   In a certain sense, the inequalities \eqref{ineqCipolla} are the wrong
   inequalities.  
   These inequalities would hold  
   only for very large values of $r_N$, especially
   when we want a lower bound of $p_n>s_{2N+1}$ (except for the three
   known cases). We estimate $r_3$.

   \begin{theorem}\label{T:r3}
   Let $r_3$ be the smallest number such that 
   \begin{multline}
   p_n>s_3:=n\log n+n(\log\log n-1)+n\frac{\log\log n-2}{\log n}-\\
   -
   n\frac{(\log\log n)^2-6\log\log n+11}{2\log^2n}, \qquad  n\ge r_3.
   \end{multline}
   Then, if the Riemann Hypothesis is assumed, 
   \begin{displaymath}
   39\times10^{29}<r_3 \le 39.58\times10^{29}.
   \end{displaymath}
   \end{theorem}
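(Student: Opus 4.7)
The argument combines the RH-conditional bound $|p_n-\ali(n)|\le\frac{1}{\pi}\sqrt{n}(\log n)^{5/2}$ of Theorem~\ref{Tdistance} with the asymptotic expansion of $\ali(n)-s_3(n)=\ali(n)-f_3(n)$ supplied by Theorem~\ref{TMain}. Decomposing $p_n-s_3(n)=(p_n-\ali(n))+(\ali(n)-s_3(n))$, one has $p_n>s_3(n)$ whenever $\ali(n)-s_3(n)>\frac{1}{\pi}\sqrt{n}(\log n)^{5/2}$, and conversely $p_n\le s_3(n)$ whenever $\ali(n)-s_3(n)<-\frac{1}{\pi}\sqrt{n}(\log n)^{5/2}$. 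The proof therefore reduces to numerically bracketing the signed quantity $\ali(n)-s_3(n)$ against the RH-error bound near the transition window.

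For the upper bound $r_3\le 39.58\times 10^{29}$, I would set $n^*=39.58\times 10^{29}$, evaluate the partial sum
\[
\sum_{k=4}^{N}\frac{n^*\,P_{k-1}(\log\log n^*)}{\log^{k-1}n^*}
\]
up to its optimal truncation (for $\log n^*\approx 70.45$ and $\log\log n^*\approx 4.255$ this occurs near $N\approx \log n^*/\log\log n^*\approx 17$), verify that the result exceeds $\frac{1}{\pi}\sqrt{n^*}(\log n^*)^{5/2}\approx 2.6\times 10^{19}$, and conclude $p_{n^*}>s_3(n^*)$. I would then extend the inequality to all $n\ge n^*$ by a monotonicity argument: since $\ali(n)-s_3(n)$ is asymptotic to $nP_3(\log\log n)/\log^3 n$ and grows strictly faster than $\sqrt{n}(\log n)^{5/2}$, direct computation using $\ali'(n)=\log\ali(n)$ and the explicit form of $s_3'(n)$ shows that the gap $\ali(n)-s_3(n)-\frac{1}{\pi}\sqrt{n}(\log n)^{5/2}$ has strictly positive derivative on $[n^*,\infty)$. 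For the lower bound $r_3>39\times 10^{29}$ I would symmetrically locate a specific $n^{**}\ge 39\times 10^{29}$ at which the same summation evaluates below $-\frac{1}{\pi}\sqrt{n^{**}}(\log n^{**})^{5/2}$, which forces $p_{n^{**}}<s_3(n^{**})$ and therefore $r_3>n^{**}$.

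The main obstacle is numerical precision. At $n\approx 4\times 10^{30}$ one has $y=\log\log n\approx 4.255$, only about $0.02$ above the unique positive real zero $y_3\approx 4.23415$ of $P_3$. The resulting $P_3(y)\approx 0.05$ is minute, so the partial sums $\sum_{k=4}^{N}nP_{k-1}(\log\log n)/\log^{k-1}n$ exhibit deep cancellation among individual terms of magnitude $10^{22}$--$10^{23}$, whereas the required resolution is only $\sim 10^{19}$, the size of the RH error. The pessimistic majorant $|P_n(y)|\le 3\cdot n!\,y^n$ underlying Theorems~\ref{TMain} and~\ref{concrete} is far too crude for this task: their guaranteed remainder bounds dwarf the true signal. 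Consequently the proof must instead evaluate each $P_k(\log\log n)$ exactly via the recurrence of Theorem~\ref{T:algorithm} to high precision, sum the expansion at its optimal truncation, and invoke the principle documented empirically in the remark after Theorem~\ref{concrete} that an optimally truncated asymptotic expansion of $\ali(n)$ is approximated to within the magnitude of its smallest term; it is this latter step whose rigor is the principal limitation, which is also why the result is stated as an estimation rather than a definitive value of $r_3$.
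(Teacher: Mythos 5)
Your overall skeleton is the paper's: decompose $p_n-s_3=(p_n-\ali(n))+(\ali(n)-s_3(n))$, control the first difference by the RH bound of Theorem~\ref{Tdistance}, and bracket the second against $\frac1\pi\sqrt n(\log n)^{5/2}$. But the step you yourself flag as non-rigorous --- appealing to the empirical ``optimal truncation'' principle to evaluate $\ali(n)-s_3(n)$ --- is a genuine gap, and your reason for resorting to it is factually wrong. For the upper bound, Theorem~\ref{concrete} with $N=10$ carries the constant $20(N/(e\log N))^N\approx 2.2\times10^3$, not the $26(N+1)!$ of Theorem~\ref{TMain}; at $n\approx 4\times10^{30}$ its guaranteed remainder is $\approx 8\times10^{19}$, far below the signal $nP_3(\log\log n)/\log^3 n\sim10^{23}$ and comparable to the RH error. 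So no heuristic is needed: the paper substitutes $x=e^y$ and reduces everything to the one-variable inequality
\begin{displaymath}
P_3(y)+\sum_{k=5}^{10}P_{k-1}(y)e^{-(k-4)y}>c_{10}\,y^{10}e^{-7y}+\frac1\pi e^{\frac{11}{2}y}e^{-\frac12 e^{y}},
\end{displaymath}
whose left side tends to $+\infty$ and right side to $0$, yielding a computable threshold $y_0=4.2549\ldots$ and hence validity for \emph{all} $n\ge 3.958\times10^{30}$ at once --- this replaces your separate (and unverified) monotonicity argument for the derivative of the gap on $[n^*,\infty)$.

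For the lower bound your symmetric plan genuinely fails, not merely lacks rigor: at $n=39\times10^{29}$ the margin $s_3(n)-\ali(n)-\frac1\pi\sqrt n(\log n)^{5/2}$ is only about $2\times10^{19}$, which is \emph{smaller} than the $\approx 8\times10^{19}$ rigorous remainder of Theorem~\ref{concrete}, and no $n^{**}\ge 39\times10^{29}$ gives a better margin (the gap shrinks as $n$ increases toward the crossover). The asymptotic expansion therefore cannot decide the sign there, rigorously or otherwise, without an unproved error estimate. The paper sidesteps this by computing $\ali(39\times10^{29})$ directly to some forty digits via Newton's method applied to $\li$ (using the expansion only as a starting guess), then checking $\ali(n)+\frac1\pi\sqrt n(\log n)^{5/2}<s_3(n)$ numerically. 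Replacing your optimal-truncation step by such a direct, certifiable evaluation of $\ali$ is the missing ingredient that turns your outline into a proof.
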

   
   \begin{proof}
   By Theorem \ref{Tdistance}, there exists $\theta_1$ with $|\theta_1|\le 1$
   such that 
   \begin{displaymath}
   p_n=\ali(n)+\theta_1\frac{\sqrt{n}}{\pi}(\log n)^{\frac52},\qquad n\ge11.
   \end{displaymath}
   
   By Theorem \ref{concrete},  with $5\le N\le 10$ and  setting 
   $n=e^x$, $x=\log n$, and 
   $y=\log\log n$, we have for $n>e^{z_N}$ 
   \begin{multline*}
   \ali(n)=x e^x\Bigl(1+\frac{y-1}{x}+\frac{y-2}{x^2}
   -\frac{y^2-6y+11}{2 x^3}+\\
   +\frac{P_3(y)}{x^4}+\sum_{k=5}^N\frac{P_{k-1}(y)}{x^k}
   +\theta_2 c_N\frac{y^N}{x^{N+1}}\Bigr).
   \end{multline*}
   
   The inequality of the Theorem is obtained if 
   \begin{displaymath}
   x e^x\Bigl(
   \frac{P_3(y)}{x^4}+\sum_{k=5}^N\frac{P_{k-1}(y)}{x^k}
   - c_N\frac{y^N}{x^{N+1}}\Bigr)-\frac{e^{x/2}}{\pi}x^{\frac52}>0.
   \end{displaymath}
   This is equivalent to 
   \begin{displaymath}
   P_3(y)+\sum_{k=5}^NP_{k-1}(y)e^{-(k-4)y}>
   c_Ny^Ne^{-(N-3)y}+\frac{1}{\pi}e^{\frac{11}{2}y}e^{-\frac12e^{y}}.
   \end{displaymath}
   Since $P_3(y)\to+\infty$ and all the other terms tend to $0$ 
   as $y\to+\infty$,
   it is clear that the inequality is true for $y>y_0$. 
   With $N=10$, we find $y_0=4.254946453\dots$ The inequality $p_n < s_n$ is
   true for $n\ge  3.95702224148845656\times 10^{30}$.
   This proves that $r_3\le 39.58\times10^{29}$. 
   
   In order to show that $r_3>39\times10^{29}$, we directly show that,
   for $n=39\times10^{29}$, the opposite inequality 
   $p_n<s_3$ is obtained. 
   
   We compute 
   \begin{displaymath}
   s_3=    2.87527\,18639\,02974\,79681\,42399\,35057\,89294\,02005\,
   87915 \times 10^{32}.
   \end{displaymath}
   Now we can compute $\ali(n)$,
   for which we already have obtained a good approximation through 
   the asymptotic expansion, and then apply the Newton method
   \begin{displaymath}
   \ali(n)=2.87527\,18639\,02495\,21516\,14800\,14732\,45414\,39731\times 10^{32}.
   \end{displaymath}
   Therefore, from Theorem \ref{Tdistance}, we obtain $p_n<\ali(n)+\frac{1}{\pi}
   \sqrt{n}\log^{\frac52}n$, so that
   \begin{displaymath}
   p_n< 2.87527\,18639\,02756\,97808\,39055\,05640\,30082\,86370\,11482
   \times 10^{32}
   \end{displaymath}
   and we can  conclude that  $p_n<s_3$.
   \end{proof}

\end{document}